\def\XXint#1#2#3{{\setbox0=\hbox{$#1{#2#3}{\int}$ }
\vcenter{\hbox{$#2#3$ }}\kern-.6\wd0}}
\def\rr{{\mathbb R}}
\def\ee{{\mathbb E}}
\def\rn{{\mathbb{R}^n}}
\def\zz{{\mathbb Z}}
\def\cc{{\mathbb C}}
\def\ss{{\mathbb S}}
\def\nn{{\mathbb N}}
\def\cf{{\mathcal F}}
\def\cm{{\mathcal M}}
\def\ct{{\mathcal T}}
\def\MP{{\mathbb{P}}}
\def\WP{WL_{\varphi}(\Omega)}
\def\V{\varphi}
\def\D{(0,\infty)}
\def\WH{WH_{\varphi}^s(\Omega)}
\def\cwp{W\mathcal{L}_{\V}(\Omega)}
\def\fz{\infty }
\def\lf{\left}
\def\r{\right}
\def\hs{\hspace{0.25cm}}
\def\ls{\lesssim}
\def\noz{\nonumber}
\def\dis{\displaystyle}
\newtheorem{theorem}{Theorem}[section]
\newtheorem{lemma}[theorem]{Lemma}
\theoremstyle{definition}
\newtheorem{remark}[theorem]{Remark}
\newtheorem{assumption}[theorem]{Assumption}
\newtheorem{definition}[theorem]{Definition}
\renewcommand{\appendix}{\par
   \setcounter{section}{0}%
   \setcounter{subsection}{0}%
   \setcounter{subsubsection}{0}%
   \gdef\thesection{\@Alph\c@section}%
   \gdef\thesubsection{\@Alph\c@section.\@arabic\c@subsection}%
   \gdef\theHsection{\@Alph\c@section.}%
   \gdef\theHsubsection{\@Alph\c@section.\@arabic\c@subsection}%
   \csname appendixmore\endcsname
 }
\numberwithin{equation}{section}
\begin{document}

\arraycolsep=1pt

\title{\bf\Large  Atomic Characterizations of Weak Martingale Musielak--Orlicz Hardy
Spaces and Their Applications
\footnotetext{\hspace{-0.35cm} 2010 {\it
Mathematics Subject Classification}. Primary  60G42;
Secondary 60G46, 42B25, 42B35.
\endgraf {\it Key words and phrases.} probability space, atomic characterization,
weak martingale Musielak--Orlicz Hardy space, sublinear operator.
\endgraf This project is supported by the National
Natural Science Foundation of China
(Grant Nos. 11571039, 11726621, 11671185 and 11761131002). }}
\author{Guangheng Xie and Dachun Yang\,\footnote{Corresponding author/{\color{red}October 22, 2018}/Final version.}}
\date{}
\maketitle

\vspace{-0.8cm}

\begin{center}
\begin{minipage}{13cm}
{\small {\bf Abstract}\quad
Let $(\Omega,\mathcal{F},\mathbb{P})$ be a probability space and
$\varphi:\ \Omega\times[0,\infty)\to [0,\infty)$ be a Musielak--Orlicz
function. In this article, the authors establish the atomic characterizations of weak martingale Musielak--Orlicz Hardy spaces $WH_{\varphi}^s(\Omega)$, $WH_{\varphi}^M(\Omega)$, $WH_{\varphi}^S(\Omega)$, $WP_{\varphi}(\Omega)$ and $WQ_{\varphi}(\Omega)$.
Using these atomic characterizations, the authors then obtain the boundedness of
sublinear operators from weak martingale Musielak--Orlicz Hardy spaces to
weak Musielak--Orlicz spaces, and some martingale inequalities which further
clarify the relationships among $WH_{\varphi}^s(\Omega)$, $WH_{\varphi}^M(\Omega)$, $WH_{\varphi}^S(\Omega)$, $WP_{\varphi}(\Omega)$ and $WQ_{\varphi}(\Omega)$.
All these results improve and generalize the corresponding results on
weak martingale Orlicz--Hardy spaces.
Moreover, the authors also improve all the known results on weak
martingale Musielak--Orlicz Hardy spaces. In particular, both the boundedness
of sublinear operators and the martingale inequalities, for the weak weighted
martingale Hardy spaces as well as for the weak weighted martingale Orlicz--Hardy spaces, are new.
}
\end{minipage}
\end{center}

\vspace{0.2cm}

\section{Introduction\label{s1}}
As is well known, the classical weak Hardy spaces naturally appear
when studying the boundedness of operators in critical cases. Indeed, Fefferman
and Soria \cite{fs87} originally introduced the weak Hardy space $WH^1(\rn)$
and proved in \cite[Theorem 5]{fs87} that some Calder\'on--Zygmund operators
are bounded from $WH^1(\rn)$ to weak Lebesgue spaces $WL^1(\rn)$.
It should also point out that Fefferman et al. \cite{frs} proved that the weak
Hardy spaces are the intermediate spaces of Hardy spaces in the real interpolation method.

Recently, various martingale Hardy spaces were investigated; see, for example,
Weisz \cite{W90,W94,W16}, Ho \cite{H,H16}, Nakai et al. \cite{mns12,ns,nss}, Sadasue \cite{S}
and Jiao et al. \cite{jzww,xjy18} for various different martingale Hardy spaces and their applications.
Moreover, the theory of weak martingale Hardy spaces has also been developed rapidly.
The weak Hardy spaces consisting of Vilenkin martingales were originally studied by
Weisz \cite{w98} and then fully generalized by Hou and Ren \cite{hr06}.
Inspired by these, Jiao et al. \cite{j,jwp15} and Liu et al. \cite{lzp17,lz17}
investigated the weak martingale Orlicz--Hardy spaces associated with concave
functions. Zhou et al. \cite{zwj} introduced the weak martingale Orlicz--Karamata--Hardy spaces
associated with concave functions and established their atomic characterizations.

On another hand, as a generalization of the Orlicz space and the weighted Lebesgue space,
the Musielak--Orlicz spaces prove very useful in dealing with some problems of analysis,
probability and partial differential equations
(see, for example, \cite{bgk,Yang2017}, \cite{xjy18,xwyj}, \cite{acgy} and their references). Very recently, Yang \cite{Y17} introduced the weak martingale Musielak--Orlicz Hardy spaces
which are a generalization of weak martingale Orlicz--Hardy spaces (see, for example, \cite{jwp15}).
Moreover, Yang \cite{Y17} also established the atomic characterizations
of weak martingale Musielak--Orlicz Hardy spaces and the boundedness of sublinear operators
from weak martingale Musielak--Orlicz Hardy spaces to weak Musielak--Orlicz spaces.

Let $(\Omega,\mathcal{F},\mathbb{P})$ be a probability space.
A function $\varphi:\ \Omega\times[0,\infty)\to[0,\infty)$
is called a \emph{Musielak--Orlicz} \emph{function} if the function $\varphi(\cdot,t)$ is a measurable
function for any given $t\in[0,\infty)$, and the function
$\varphi(x,\cdot):\ [0,\infty)\to[0,\infty)$ is an Orlicz function for any given $x\in\Omega$, namely,
$\varphi(x,\cdot)$ is non-decreasing, $\varphi(x,0)=0$, $\varphi(x,t)>0$ for any $t\in(0,\fz)$ and
$\lim_{t\to\infty}\varphi(x,t)=\infty$.
For any $p\in(0,\infty),$ a Musielak--Orlicz function $\varphi$ is said to
be of \emph{uniformly lower} (resp., \emph{upper}) \emph{type} $p$
if there exists a positive constant $C_{(p)},$ depending on \emph{p}, such that
\begin{align}\label{i1}
\varphi(x,st)\le C_{(p)}s^p\varphi(x,t)
\end{align}
for any $x\in\Omega$ and $t\in[0,\infty),$ $s\in(0,1)$ (resp., $s\in[1,\infty)$);
see \cite{Yang2017} for more details.

Recall that the following assumption is needed through \cite{Y17}.

\setcounter{theorem}{0}
\renewcommand{\thetheorem}{\arabic{section}.\Alph{theorem}}

\begin{assumption}\label{a1}
Let $\V$ be a Musielak--Orlicz function and let $\V$ be of uniformly lower type $p\in(0,1]$
and of uniformly upper type $1$.
\end{assumption}

\renewcommand{\thetheorem}{\arabic{section}.\arabic{theorem}}

Observe that Assumption \ref{a1} is quite restrictive.
Indeed, for any given $p\in(1,\fz)$, if $\V(x,t):=t^p$ for any $x\in\Omega$ and $t\in\D$,
then $\V$ is of uniformly lower type $p$ and also of uniformly upper type $p$. However, in this case,
$\V$ is not of uniformly upper type 1.
Thus, under Assumption \ref{a1}, all the results in \cite{Y17} can not cover
the corresponding results on weak Lebesgue spaces $WL_p(\Omega)$ with any given $p\in(1,\fz)$ in \cite{w98,hr06}.

On another hand, Jiao et al. \cite{jwp15} studied weak martingale Orlicz--Hardy spaces under
the following assumption.
For any $\ell\in(0,\infty)$, let $\mathcal{G}_{\ell}$ be the set of all Orlicz functions $\Phi$
satisfying that $\Phi$ is of lower type $\ell$ and of upper type $1$
(see, for example, \cite{jwp15,mns12}). Let $\Phi$ be a concave function and
$\Phi'$ its derivative function. Its lower index and its upper index of
$\Phi$ are defined, respectively, by setting
\begin{equation}\label{j}
p_{\Phi}:=\inf_{t\in\D}\frac{t\Phi'(t)}{\Phi(t)} \quad
\mbox{and} \quad q_{\Phi}:=\sup_{t\in\D}\frac{t\Phi'(t)}{\Phi(t)}.
\end{equation}
All the results in \cite{jwp15} need the assumptions that
$\Phi\in \mathcal{G}_{\ell}$ for some $\ell\in(0,1]$ and $q_{\Phi^{-1}}\in\D$,
here $\Phi^{-1}$ denotes the inverse function of $\Phi$.
Observe that, when $\V(x,t):=\Phi(t)$ for any $x\in\Omega$ and $t\in\D$,
$\V$ satisfies Assumption \ref{a1} if and only if
$\Phi\in \mathcal{G}_{\ell}$ for some $\ell\in(0,1]$.

The first motivation of this article is to weaken Assumption \ref{a1} of \cite{Y17} and
to remove the unnecessary assumption $q_{\Phi^{-1}}\in\D$ of \cite{jwp15}. Indeed,
instead of Assumption \ref{a1}, in this article, we always make the following assumption.

\setcounter{theorem}{0}

\begin{assumption}\label{a1-new}
Let $\V$ be a Musielak--Orlicz function, and let $\V$ be of uniformly
lower type $p_{\V}^{-}$ for some $p_{\V}^{-}\in(0,\fz)$
and of uniformly upper type $p_{\V}^{+}$ for some $p_{\V}^{+}\in(0,\fz)$.
\end{assumption}

In this article, under Assumption \ref{a1-new}, we first establish the atomic
characterizations of weak martingale Musielak--Orlicz Hardy spaces $WH_{\varphi}^s(\Omega)$,
$WH_{\varphi}^M(\Omega)$, $WH_{\varphi}^S(\Omega)$, $WP_{\varphi}(\Omega)$ and $WQ_{\varphi}(\Omega)$.
Using these atomic characterizations, we then obtain the boundedness of
sublinear operators from weak martingale Musielak--Orlicz Hardy spaces to
weak Musielak--Orlicz spaces, and some martingale inequalities which
further clarify the relationships among $WH_{\varphi}^s(\Omega)$, $WH_{\varphi}^M(\Omega)$,
$WH_{\varphi}^S(\Omega)$, $WP_{\varphi}(\Omega)$ and $WQ_{\varphi}(\Omega)$.
All these results improve and generalize the corresponding
results on weak martingale Orlicz--Hardy spaces (see \cite{jwp15}).
Moreover, we also improve all the results on weak
martingale Musielak--Orlicz Hardy spaces in \cite{Y17}.
In particular, both the boundedness of sublinear operators and
the martingale inequalities, for the weak weighted
martingale Hardy spaces as well as for the weak weighted martingale Orlicz--Hardy spaces, are new.

To be precise, this article is organized as follows.

In Section \ref{s2}, we first recall some notation and notions on Musielak--Orlicz
functions, weak Musielak--Orlicz spaces and weak martingale Musielak--Orlicz Hardy
spaces. Then we introduce various weak atomic martingale Musielak--Orlicz Hardy spaces.

Section \ref{s3} is devoted to establishing the atomic characterizations of spaces
$WH_{\varphi}^s(\Omega)$, $WH_{\varphi}^M(\Omega)$, $WH_{\varphi}^S(\Omega)$,
$WP_{\varphi}(\Omega)$ and $WQ_{\varphi}(\Omega)$
(see Theorems \ref{thm-atom}, \ref{thm-apq} and \ref{thm-atomSM} below).
The above five weak martingale Musielak--Orlicz Hardy spaces contain weak
weighted martingale Hardy spaces, weak martingale Orlicz--Hardy spaces
in \cite{jwp15} and weak variable martingale Hardy spaces as special cases
(see Remark \ref{rmk-sc} below for more details). Recall that, even for
weak martingale Hardy spaces in \cite{w98,hr06}, only the $\infty$-atomic
characterization is known. However, we establish the $q$-atomic
characterizations for any $q\in(\max\{p_{\V}^{+},1\},\fz]$ in this article,
where $p_{\V}^{+}$ is the uniformly upper type index of $\V$.
Moreover, in \cite{jwp15} for weak martingale Orlicz--Hardy spaces and \cite{Y17}
for weak martingale Musielak--Orlicz Hardy spaces, the results of
atomic characterizations need the index $p_{\V}^{+}=1$. Differently from \cite{jwp15,Y17},
we allow $p_{\V}^{+}\in\D$ in Theorems \ref{thm-atom}, \ref{thm-apq}
and \ref{thm-atomSM} below. So, the classical argument used in the proof of
\cite[Theorem 1]{hr06} and \cite[Theorem 2.1]{jwp15}
does not work here anymore. Via using some ideas from the proofs of \cite[Theorem 3.5]{lyj16}
and constructing some appropriate atoms, we overcome this difficulty; see the proof of
Theorems \ref{thm-atom} and \ref{thm-atomSM}. Moreover, our atomic characterizations
of weak martingale Musielak--Orlicz Hardy spaces cover weak variable martingale Hardy spaces,
weak weighted martingale Hardy spaces and weak weighted martingale Orlicz--Hardy spaces,
which are also new (see Remarks \ref{rmk-cac} and \ref{rmk-cc} below).

In Section \ref{s4}, we study the boundedness of sublinear
operators on weak martingale Musielak--Orlicz Hardy
spaces. Recall that, for a martingale space $X$ and a measurable function space $Y$, an operator
$T:\ X\to Y$ is called a \emph{sublinear operator} if, for any $f,$ $g\in X$ and $c\in\rr$,
$$\lf|T(f+g)\r|\le \lf|T(f)\r|+\lf|T(g)\r|\quad\mathrm{and}\quad |T(cf)|\le |c||T(f)|.$$
The boundedness of sublinear operators from the weak martingale Hardy spaces
to weak Lebesgue spaces was studied in \cite{hr06,w98}, and then
from the weak martingale Orlicz--Hardy spaces
to weak Orlicz spaces in \cite{jwp15}. All these results need the assumption
that sublinear operators $T$ are bounded on
$L^q(\Omega)$ for some $q\in[1,2]$ or some $q\in[1,\fz)$.
Particularly, in \cite[Theorem 4.2]{Y17}, Yang also gave some sufficient
conditions for a sublinear operator $T$ to be bounded from the weak
martingale Musielak--Orlicz Hardy spaces to weak Musielak--Orlicz spaces.
In what follows, for any measurable set $E\subseteq\Omega$ and $t\in[0,\fz)$, let
$\varphi(E,t):=\int_{E}\varphi(x,t)\,d\mathbb{P}$. The following assumption on $\V$ is
needed in Yang \cite[Theorems 4.2 through 4.5]{Y17}.

\setcounter{theorem}{1}
\renewcommand{\thetheorem}{\arabic{section}.\Alph{theorem}}

\begin{assumption}\label{a2}
\begin{itemize}
\item[(i)] Let $T$ be a sublinear operator bounded on $L^2(\Omega)$.
\item[(ii)] Let $\V$ be a Musielak--Orlicz function satisfying Assumption \ref{a1}
and there exist two positive constants $B$ and $D$ such that,
for any measurable subset $E\subseteq\Omega$, $x\in\Omega$ and $t\in\D$,
\begin{align}\label{y17}
B\V(x,t)\MP(E)\le\V(E,t)\le D\V(x,t)\MP(E).
\end{align}
\end{itemize}
\end{assumption}

\renewcommand{\thetheorem}{\arabic{section}.\arabic{theorem}}

Observe that \eqref{y17} is also quite restrictive.
Indeed, using \eqref{y17} with $E=\Omega$,
we find that, for any $x\in\Omega$ and $t\in\D$,
$$\frac{1}{D}\V(\Omega,t)\le\V(x,t)\le\frac{1}{B}\V(\Omega,t).$$
Thus, Assumption \ref{a2} requires $\V$ to be essentially
an Orlicz function. Moreover, \cite[Theorems 4.2 through 4.5]{Y17}
do not cover the very important case, namely, the weighted case.

Observe that all these assumptions for the boundedness of sublinear operators
used in \cite{hr06,w98,jwp15,Y17} ensure that $T$ is
bounded from some martingale Hardy spaces to some Lebesgue spaces,
which, together with the fact that Musielak--Orlicz functions
unify Orlicz functions and weights,
motivates us to introduce the following assumption.

\setcounter{theorem}{1}

\begin{assumption}\label{a2-new}
Let $\V$ be a Musielak--Orlicz function satisfying Assumption \ref{a1-new}.
Let $T$ be a sublinear operator and satisfy one of the following:
\begin{itemize}
\item[(i)] for some given $q\in(p_{\V}^{+},\fz)$, $T$ is bounded from  the weighted Hardy space
$H_q^s(\Omega,\V(\cdot,t)\,d\MP)$ to the weighted Lebesgue space $L^q(\Omega,\V(\cdot,t)\,d\MP)$;
\item[(ii)] for some given $q\in(p_{\V}^{+},\fz)$, $T$ is bounded from  the weighted Hardy space
$H_q^S(\Omega,\V(\cdot,t)\,d\MP)$ to the weighted Lebesgue space $L^q(\Omega,\V(\cdot,t)\,d\MP)$;
\item[(iii)] for some given $q\in(p_{\V}^{+},\fz)$, $T$ is bounded from  the weighted Hardy space
$H_q^M(\Omega,\V(\cdot,t)\,d\MP)$ to the weighted Lebesgue space $L^q(\Omega,\V(\cdot,t)\,d\MP)$.
\end{itemize}
(See Section \ref{s2} for the definitions of these spaces.)
\end{assumption}

In Section \ref{s4} of this article, under Assumption \ref{a2-new},
we obtain the boundedness of sublinear operators from $WH_{\varphi}^s(\Omega)$
(resp., $WH_{\varphi}^M(\Omega)$, $WH_{\varphi}^S(\Omega)$, $WP_{\varphi}(\Omega)$ or $WQ_{\varphi}(\Omega)$) to $\WP$;
see Theorems \ref{thm-sub}, \ref{thm-subpq} and \ref{thm-subSM} below.
Particularly, we obtain the same results
as in \cite{Y17} via replacing Assumption \ref{a2}
by Assumption \ref{a2-new}.

Observe that Assumption \ref{a2-new} is much weaker than Assumption \ref{a2}. Indeed,
Assumption \ref{a1-new} is weaker than Assumption \ref{a1} and the assumption \eqref{y17}
in Assumption \ref{a2} does not needed in Assumption \ref{a2-new}.
Moreover, under Assumption \ref{a2},
we find that $p_{\V}^{+}=1$ and the weighted Hardy space $H_2^s(\Omega,\V(\cdot,t)\,d\MP)$
[resp., $H_q^S(\Omega,\V(\cdot,t)\,d\MP)$ or $H_q^M(\Omega,\V(\cdot,t)\,d\MP)$]
becomes the martingale Hardy space $H_2^s(\Omega)$ [resp., $H_2^S(\Omega)$ or $H_2^M(\Omega)$],
which, together with the boundedness of $T$ on $L^2(\Omega)$ and
the boundedness of the operator $s$ (resp., $S$ or $M$) on $L^2(\Omega)$
(see, for example, \cite[Proposition 2.6 and Theorems 2.11 and 2.12]{W94}),
further implies that Assumption \ref{a2-new} holds true. Thus, compared
with Assumption \ref{a2}, Assumption \ref{a2-new} is much weaker.
In particular, Theorems \ref{thm-sub}, \ref{thm-subpq} and \ref{thm-subSM}
of this article indeed improve \cite[Theorems 4, 5 and 6]{hr06},
\cite[Theorem 3.1 and Remark 3.2]{jwp15} and \cite[Theorems 4.2, 4.3 and 4.4]{Y17},
respectively (see Remark \ref{rmk-su} below for more details).

Also, in this section, using Theorems \ref{thm-sub} and \ref{thm-subpq},
we obtain some martingale inequalities among the spaces $WH_{\varphi}^s(\Omega)$, $WH_{\varphi}^M(\Omega)$, $WH_{\varphi}^S(\Omega)$, $WP_{\varphi}(\Omega)$ and
$WQ_{\varphi}(\Omega)$, which further
clarify the relations among these spaces, in Theorem \ref{thm-mi} below.
Moreover, Theorem \ref{thm-mi} generalizes and improves the corresponding results
on weak martingale Orlicz--Hardy spaces in \cite[Theorem 3.3]{jwp15}
(see Remark \ref{rmk=mi} below for the details).

In Section \ref{s5}, the last section of this article, we obtain some
bounded convergence theorems and dominated convergence theorems on
weak Musielak--Orlicz spaces $\WP$
(see Theorems \ref{Thm-bct} and \ref{thm-dct} below), which are of independent interest.

Finally, we make some conventions on notation used in this article. Throughout the article,
we always let $\mathbb{N}:=\{1,2,\ldots\}$, $\mathbb{Z}_{+}:=\mathbb{N}\cup \{0\}$
and $C$ denote a positive constant, which may vary from line to line.
We use the symbol $f\lesssim g$ to denote that there exists a positive constant $C$
such that $f\le Cg.$ The symbol $f\sim g$ is used as an abbreviation
of $f\lesssim g\lesssim f$. We also use the following
convention: If $f\le Cg$ and $g=h$ or $g\le h$, we then write $f\ls g\sim h$
or $f\ls g\ls h$, \emph{rather than} $f\ls g=h$
or $f\ls g\le h$. For any subset $E$ of $\Omega$,
denote $\mathbf{1}_E$ by its \emph{characteristic function}.
For any $p\in[1,\fz]$, let $p'$ denote the conjugate number of $p$, namely, $1/p+1/p'=1$.

\section{Preliminaries\label{s2}}

In this section, we first recall some notation and notions on Musielak--Orlicz
functions, weak Musielak--Orlicz spaces and weak martingale Musielak--Orlicz Hardy
spaces and then we introduce various weak atomic martingale Musielak--Orlicz Hardy spaces.

\begin{definition}
Let $\varphi$ be a Musielak--Orlicz function. The \emph{weak Musielak--Orlicz space}
$WL_{\varphi}(\Omega)$ is defined to be the set of all measurable functions $f$
such that
$$\lf\|f\r\|_{WL_{\varphi}(\Omega)}:=\inf\lf\{\lambda\in(0,\fz):\
\sup_{\alpha\in(0,\fz)}\varphi\lf(\lf\{x\in\Omega:\
\lf|f(x)\r|>\alpha\r\},\frac{\alpha}{\lambda}\r)\leq1\r\}<\fz.$$
\end{definition}
Let $p\in\D$ and $\Phi$ be an Orlicz function. If $\V(x,t):=t^p$ or $\Phi(t)$
for any $x\in\Omega$ and $t\in\D$, then $\WP$
becomes weak Lebesgue spaces $WL_p(\Omega)$  (see, for example, \cite{w98})
or weak Orlicz space $WL_{\Phi}(\Omega)$ (see, for example, \cite{jwp15}),
here and hereafter, $WL_p(\Omega)$ (resp., $WL_{\Phi}(\Omega)$) denotes
the set of all measurable functions $f$ on $\Omega$ such that
$$\|f\|_{WL_p(\Omega)}:=\sup_{\alpha\in\D}\alpha\lf[\MP\lf(\{x\in\Omega:\ |f(x)|>\alpha\}\r)\r]^{\frac1p}<\fz$$
$$\lf[\mbox{resp.},\quad\|f\|_{WL_{\Phi}(\Omega)}
:=\inf\lf\{\lambda\in\D:\ \sup_{\alpha\in\D}\Phi
\lf(\frac{\alpha}{\lambda}\r)\MP\lf(\{x\in\Omega:\ |f(x)|>\alpha\}\r)\le1\r\}<\fz\r].$$

\begin{remark}\label{tri}
If a Musielak--Orlicz function $\V$ is of uniformly upper type $p_{\V}^{+}$
for some $p_{\V}^{+}\in\D$, then there exists a positive constant $C$ such that,
for any measurable functions $f$ and $g$,
$$\lf\|f+g\r\|_{WL_{\varphi}(\Omega)}\le C\lf[\lf\|f\r\|_{WL_{\varphi}(\Omega)}
+\lf\|g\r\|_{WL_{\varphi}(\Omega)}\r].$$
Indeed, by the uniformly upper type $p_{\V}^{+}$ property of $\V$, we find that,
for any $\lambda\in\D$,
\begin{align*}
&\sup_{\alpha\in(0,\fz)}\varphi\lf(\lf\{x\in\Omega:\
\lf|f(x)+g(x)\r|>\alpha\r\},\frac{\alpha}{\lambda}\r)\\
&\hs\lesssim \sup_{\alpha\in(0,\fz)}\varphi\lf(\lf\{x\in\Omega:\
\lf|f(x)\r|>\frac{\alpha}{2}\r\},\frac{\alpha}{2\lambda}\r)
+\sup_{\alpha\in(0,\fz)}\varphi\lf(\lf\{x\in\Omega:\
\lf|g(x)\r|>\frac{\alpha}{2}\r\},\frac{\alpha}{2\lambda}\r)\\
&\hs\sim \sup_{\alpha\in(0,\fz)}\varphi\lf(\lf\{x\in\Omega:\
\lf|f(x)\r|>\alpha\r\},\frac{\alpha}{\lambda}\r)
+\sup_{\alpha\in(0,\fz)}\varphi\lf(\lf\{x\in\Omega:\
\lf|g(x)\r|>\alpha\r\},\frac{\alpha}{\lambda}\r).
\end{align*}
Then the above claim follows immediately.
\end{remark}

\begin{remark}\label{rem-uni}
Obviously, if $\varphi$ is both of uniformly lower
type $p_1$ and of uniformly upper type $p_2$,
then $p_1\le p_2$. Moreover, if $\varphi$ is of uniformly
lower (resp., upper) type $p$, then,
it is also of uniformly lower (resp., upper) type
$\widetilde{p}$ for any $\widetilde{p}\in(0,p)$
[resp., $\widetilde{p}\in(p,\fz)$].
\end{remark}

\begin{remark}\label{rem-p}
Let $\V$ be a Musielak--Orlicz function satisfying Assumption \ref{a1-new}.
If there exist an Orlicz function $\Phi$ and
two positive constants $B$ and $D$ such that, for any $x\in\Omega$ and $t\in\D$,
$$B\Phi(t)\le\V(x,t)\le D\Phi(t),$$
then, for any $f\in\WP$, $\|f\|_{\WP}\sim \|f\|_{WL_{\Phi}(\Omega)}$
with the positive equivalence constants independent of $f$.
\end{remark}

Let $\{\mathcal{F}_n\}_{n\in\mathbb{Z}_{+}}$ be an increasing
sequence of sub-$\sigma$-algebras of $\mathcal{F}$ and let
$\{\mathbb{E}_n\}_{n\in\mathbb{Z}_{+}}$ be the
associated conditional expectations. The weight we consider in this article
are\emph{ special weights} with respect to
$(\Omega,\mathcal{F},\mathbb{P},\{\mathcal{F}_n\}_{n\in\mathbb{Z}_{+}})$, that is,
the martingale generated by $\V$, where $\V$ is a
Musielak--Orlicz function, which is strictly positive, and
$\sup_{t\in\D}\int_{\Omega}\V(x,t)\,d\MP<\fz$.
More precisely, let $\varphi(\cdot,t):=\{\varphi_n(\cdot,t)\}_{n\in\mathbb{Z}_{+}}$
be the martingale generated by $\varphi(\cdot,t)$ for any $t\in(0,\infty).$
For simplicity, we still use $\varphi(\cdot,t)$ to denote the martingale $\varphi(\cdot,t):=\{\varphi_n(\cdot,t)\}_{n\in\mathbb{Z}_{+}}$.

The following weighted condition is due to Izumisawa and Kazamaki \cite{IK77}.

\begin{definition}\label{def-ap}
Let $q\in[1,\fz)$. A positive Musielak--Orlicz function
$\V:\ \Omega\times[0,\fz)\to[0,\fz)$ is said to
satisfy the \emph{uniformly} \emph{$A_q(\Omega)$ condition},
denoted by $\V\in\mathbb{A}_q(\Omega)$,
if there exists a positive constant $K$ such that,
when $q\in(1,\fz)$,
$$\sup_{t\in\D}\ee_n(\V)(\cdot,t)
\lf[\ee_n\lf(\V^{-\frac1{q-1}}\r)(\cdot,t)\r]^{q-1}\le K
\quad \MP{\text-}{\rm almost\ \ everywhere},\quad \forall\,n\in\mathbb{Z}_{+}$$
and, when $q=1$,
$$\sup_{t\in\D}\ee_n(\V)(\cdot,t)\frac{1}{\V(\cdot,t)}\le K
\quad \MP{\text-}{\rm almost\ \ everywhere},\quad \forall\,n\in\mathbb{Z}_{+}.$$
A positive Musielak--Orlicz funtion $\V$ is said to belong to
$\mathbb{A}_{\fz}(\Omega)$ if $\V\in \mathbb{A}_q(\Omega)$ for some $q\in[1,\fz)$.
\end{definition}

The following $\ss$ condition arises naturally when dealing with the weighted
martingale inequalities. We refer to Dol\'eans-Dade and Meyer \cite{DM78} and
Bonami and L\'epingle \cite{bl79} for more details.

\begin{definition}\label{def-s}
Let $t\in[0,\infty)$. The martingale $\varphi(\cdot,t)
:=\{\varphi_n(\cdot,t)\}_{n\in\mathbb{Z}_{+}}$
is said to satisfy the \emph{uniformly $\mathbb{S}$ condition},
denoted by $\varphi\in \mathbb{S},$
if there exists a positive constant $K$ such that,
for any $n\in\mathbb{N}$, $t\in(0,\infty)$ and almost every $x\in\Omega,$
\begin{align}\label{WS}
\frac{1}{K}\varphi_{n-1}(x,t)\le\varphi_n(x,t)\le K\varphi_{n-1}(x,t).
\end{align}
The \emph{conditions $\mathbb{S}^{-}$} and \emph{$\mathbb{S}^{+}$} denote two parts
of $\mathbb{S}$ satisfying only the left or the right hand sides of the
preceding inequalities, respectively.
\end{definition}
Let $w$ be a special weight on $\Omega$ and $\V(x,t):=w(x)$
for any $x\in\Omega$ and $t\in\D$.
Then Definitions \ref{def-ap} and \ref{def-s} go
back to the original weighted definition \cite{DM78,IK77}.

Denote by $\mathcal{M}$
the set of all martingales $f:=(f_n)_{n\in\mathbb{Z}_{+}}$ related to
$\{\mathcal{F}_n\}_{n\in\mathbb{Z}_{+}}$ such that $f_0=0.$
For any $f\in\mathcal{M},$ denote its \emph{martingale difference sequence}
by $\{d_nf\}_{n\in\nn}$, where $d_nf:=f_n-f_{n-1}$ for any $n\in\mathbb{N}$.
Then the \emph{maximal functions} $M_n(f)$ and $M(f)$,
the \emph{quadratic variations} $S_n(f)$ and $S(f)$,
and the \emph{conditional quadratic variations} $s_n(f)$
and $s(f)$ of the martingale $f$ are defined, respectively, by setting
\begin{align*}
M_n(f):=\sup_{0\le i\le n}|f_i|,\quad M(f):=\sup_{n\in\mathbb{Z}_{+}}|f_n|,
\end{align*}
\begin{align*}
S_n(f):=\left(\sum_{i=1}^n\left|d_if\right|^2\right)^{\frac{1}{2}},\quad S(f):=\left(\sum_{i=1}^{\infty}\left|d_if\right|^2\right)^{\frac{1}{2}},
\end{align*}
\begin{align*}
s_n(f):=\left(\sum_{i=1}^n\mathbb{E}_{i-1}\left|d_if\right|^2\right)^{\frac{1}{2}}
\quad \mathrm{and}\quad
s(f):=\left(\sum_{i=1}^{\infty}\mathbb{E}_{i-1}\left|d_if\right|^2\right)^{\frac{1}{2}}.
\end{align*}
Let $\Lambda$ be the collection of all sequences
$(\lambda_n)_{n\in\mathbb{Z}_{+}}$ of
nondecreasing, nonnegative and \emph{adapted functions}
[namely, for any $n\in\mathbb{Z}_{+}$,
$\lambda_n$ is $\mathcal{F}_n$ measurable].
Let $\lambda_{\infty}:=\lim_{n\to\infty}\lambda_n.$
For any $f\in\mathcal{M},$ let
\begin{align*}
\Lambda[WP_{\varphi}](f):=\lf\{(\lambda_n)_{n\in\mathbb{Z}_{+}}
\in\Lambda:\ |f_n|\le\lambda_{n-1}\ \ (n\in\mathbb{N}),\ \ \lambda_{\infty}\in \WP\r\}
\end{align*}
and
\begin{align*}
\Lambda[WQ_{\varphi}](f):=\lf\{(\lambda_n)_{n\in\mathbb{Z}_{+}}
\in\Lambda:\ S_n(f)\le\lambda_{n-1}\ \ (n\in\mathbb{N}),\ \ \lambda_{\infty}\in \WP\r\}.
\end{align*}

\begin{definition}\label{def-spaces}
Let $\varphi$ be a Musielak--Orlicz function.
The \emph{weak martingale Musielak--Orlicz Hardy spaces}
$WH_{\varphi}^M(\Omega)$, $WH_{\varphi}^S(\Omega)$,
$WH_{\varphi}^s(\Omega)$, $WP_{\varphi}(\Omega)$
and $WQ_{\varphi}(\Omega)$ are, respectively,
defined as follows:
\begin{align*}
WH_{\varphi}^M(\Omega):=\left\{f\in\mathcal{M}:\ \|f\|_{WH_{\varphi}^M(\Omega)}:=\lf\|M(f)\r\|_{\WP}<\infty\right\},
\end{align*}
\begin{align*}
WH_{\varphi}^S(\Omega):=\left\{f\in\mathcal{M}:\ \|f\|_{H_{\varphi}^S}
:=\lf\|S(f)\r\|_{\WP}<\infty\right\},
\end{align*}
\begin{align*}
WH_{\varphi}^s(\Omega):=\left\{f\in\mathcal{M}
:\ \|f\|_{H_{\varphi}^s}:=\lf\|s(f)\r\|_{\WP}<\infty\right\},
\end{align*}
\begin{align*}
WP_{\varphi}(\Omega):=\left\{f\in\mathcal{M}:\ \|f\|_{WP_{\varphi}(\Omega):=\inf_{(\lambda_n)_{n\in\mathbb{Z}_{+}}
\in\Lambda[WP_{\varphi}(\Omega)]}}\|\lambda_{\infty}\|_{\WP}<\infty\right\}
\end{align*}
and
\begin{align*}
WQ_{\varphi}(\Omega):=\left\{f\in\mathcal{M}:\ \|f\|_{WQ_{\varphi}(\Omega)
:=\inf_{(\lambda_n)_{n\in\mathbb{Z}_{+}}\in\Lambda[WQ_{\varphi}(\Omega)]}}
\|\lambda_{\infty}\|_{\WP}<\infty\right\}.
\end{align*}
\end{definition}

\begin{remark}\label{rmk-sc}
Several known weak martingale Hardy spaces can be regarded as special cases of the
above five weak martingale Musielak--Orlicz Hardy spaces.
For example, let $p\in(0,\fz)$, $\Phi$ be an Orlicz function on $\D$, $w$ a weight
and $p(\cdot):\ \Omega\to [1,\fz]$ a measurable function. If
$\V(x,t):=t^p$, $\Phi(t)$, $t^{p(x)}$ or $w(x)\Phi(t)$ for
any $x\in\Omega$ and $t\in\D$, then the corresponding weak martingale Musielak--Orlicz
Hardy space becomes, respectively, the weak martingale Hardy space (see \cite{hr06,w98}),
the weak martingale Orlicz--Hardy space (see \cite{jwp15}),
the weak variable martingale Hardy space or the weak weighted martingale Orlicz--Hardy space.
\end{remark}

In what follows, for any $q\in[1,\infty]$, any measurable set $B\subseteq\Omega$
and any measurable function $f$ on $\Omega,$ let
$$\|f\|_{L_{\varphi}^q(B)}:=
\begin{cases}
\displaystyle\sup_{t\in(0,\infty)}\lf[\dfrac{1}{\varphi(B,t)}\dis\int_{\Omega}|f(x)|^q
\varphi(x,t)\,d\mathbb{P}(x)\r]^{1/q} \quad {\rm when}\ \ q\in[1,\infty),\\
\|f\|_{L^{\infty}(\Omega)} \hspace{5.296cm}{\rm when}\ \ q=\infty.
\end{cases}$$
Let $\mathcal{T}$ be the set of all stopping times related to
$\{\mathcal{F}_n\}_{n\in\mathbb{Z}_{+}}$.
For any $\nu\in\mathcal{T}$, let
$B_{\nu}:=\{x\in\Omega:\ \nu(x)<\fz\}$.
Now we introduce the notion of atoms associated with Musielak--Orlicz function.

\begin{definition}\label{def-atom}
Let $q\in(1,\infty]$ and $\varphi$ be a Musielak--Orlicz function. A measurable function $a$ is
called a \emph{$(\varphi,q)_s$-atom} if there exists a stopping time $\nu$ relative to $\{\mathcal{F}_n\}_{n\in\mathbb{Z}_{+}}$
($\nu$ is called the \emph{stopping time associated with $a$}) such that
\begin{enumerate}
\item[\rm{(i)}] $a_n:=\mathbb{E}_na=0$ if $\nu\geq n$,
\item[\rm{(ii)}] $\left\|s(a)\right\|_{L_{\varphi}^q(B_{\nu})}
          \le \|\mathbf{1}_{B_{\nu}}\|_{L^{\varphi}(\Omega)}^{-1}.$
\end{enumerate}
Similarly, \emph{$(\varphi,q)_S$-atom} and \emph{$(\varphi,q)_M$-atom} are
defined via replacing (ii) in the above definition by
$$\lf\|S(a)\r\|_{L_{\varphi}^q(B_{\nu})}
\le \|\mathbf{1}_{B_{\nu}}\|_{L^{\varphi}(\Omega)}^{-1},$$
respectively, by
$$\lf\|M(a)\r\|_{L_{\varphi}^q(B_{\nu})}
\le \|\mathbf{1}_{B_{\nu}}\|_{L^{\varphi}(\Omega)}^{-1}.$$
\end{definition}

Via $(\varphi,q)_s$-atoms, $(\varphi,q)_S$-atoms and $(\varphi,q)_M$-atoms,
we now introduce three weak atomic martingale Musielak--Orlicz Hardy spaces
$WH_{\rm at}^{\V,q,s}(\Omega)$, $WH_{\rm at}^{\V,q,S}(\Omega)$ and
$WH_{\rm at}^{\V,q,M}(\Omega)$, respectively, as follows.

\begin{definition}\label{def-ats}
Let $q\in(1,\fz]$ and $\V$ be a Musielak--Orlicz function. The \emph{weak atomic martingale
Musielak--Orlicz Hardy space} $WH_{\rm at}^{\V,q,s}(\Omega)$
[resp., $WH_{\rm at}^{\V,q,S}(\Omega)$ or $WH_{\rm at}^{\V,q,M}(\Omega)$]
is defined to be the space of all $f\in\mathcal{M}$
satisfying that there exist a sequence of $(\varphi,q)_s$-atoms
[resp., $(\varphi,q)_S$-atoms or $(\varphi,q)_M$-atoms] $\{a^k\}_{k\in\zz}$,
related to stopping times $\{\nu^k\}_{k\in\zz}$, and a positive constant $\widetilde{C}$,
independent of $f$, such that, for any $n\in\mathbb{Z}_{+}$,
\begin{align*}
\sum_{k\in\mathbb{Z}}\mu^ka_n^k=f_n \quad {\rm \MP-almost\ everywhere},
\end{align*}
where $\mu^k:=\widetilde{C}2^k\|\mathbf{1}_{B_{\nu^k}}\|_{L^{\varphi}(\Omega)}$
for any $k\in\zz$,
and
\begin{align*}
\|f\|_{WH_{\rm at}^{\V,q,s}(\Omega)}\quad\lf[\mbox{resp., }
\|f\|_{WH_{\rm at}^{\V,q,S}(\Omega)}\mbox{ or }\|f\|_{WH_{\rm at}^{\V,q,M}(\Omega)} \r]\\
:=\inf\lf\{\inf\lf[\lambda\in\D:\ \sup_{k\in\zz}
\V\lf(B_{\nu^k},\frac{2^k}{\lambda}\r)\le1\r]\r\}<\fz,
\end{align*}
where the first infimum is taken over all decompositions of $f$ as above.
\end{definition}

Let $p\in\D$ and $w$ be a special weight.
The \emph{weighted Lebesgue space $L^p(\Omega,w\,d\MP)$} is defined to be
the set of all measurable functions $f$ on $\Omega$ such that
$$\lf\|f\r\|_{L^p(\Omega,w\,d\MP)}
:=\lf[\int_{\Omega}\lf|f(x)\r|^pw(x)
\,d\mathbb{P}(x)\r]^{\frac1p}<\infty.$$
The \emph{weighted martingale Hardy spaces $H_p^s(\Omega,w\,d\MP)$, $H_p^S(\Omega,w\,d\MP)$
and $H_p^M(\Omega,w\,d\MP)$} are, respectively, defined as follows:
$$H_p^s(\Omega,w\,d\MP):=\lf\{f\in\mathcal{M}:\ \|s(f)\|_{L^p(\Omega,w\,d\MP)}<\fz\r\},$$
$$H_p^S(\Omega,w\,d\MP):=\lf\{f\in\mathcal{M}:\ \|S(f)\|_{L^p(\Omega,w\,d\MP)}<\fz\r\}$$
and $$H_p^M(\Omega,w\,d\MP):=\lf\{f\in\mathcal{M}:\ \|M(f)\|_{L^p(\Omega,w\,d\MP)}<\fz\r\}.$$
If $w\equiv1$, then the weighted Hardy space $H_p^s(\Omega,w\,d\MP)$
[resp., $H_p^S(\Omega,w\,d\MP)$ or $H_p^M(\Omega,w\,d\MP)$] becomes the classical
martingale Hardy space $H_p^s(\Omega)$ [resp., $H_p^S(\Omega)$ or $H_p^M(\Omega)$]
(see, for example, \cite[p.\,6]{W94}).

\section{Atomic characterizations\label{s3}}

In this section, we establish atomic characterizations of weak martingale
Musielak--Orlicz Hardy spaces
$WH_{\varphi}^s(\Omega)$, $WH_{\varphi}^M(\Omega)$, $WH_{\varphi}^S(\Omega)$,
$WP_{\varphi}(\Omega)$ and $WQ_{\varphi}(\Omega)$. We begin with the atomic
characterization of $WH_{\varphi}^s(\Omega)$.

\begin{theorem}\label{thm-atom}
Let $q\in\D$ and $\V$ be a Musielak--Orlicz function satisfying Assumption \ref{a1-new}.
If $q\in(\max\{p_{\V}^{+},1\},\fz]$, then $\WH=WH_{\rm at}^{\V,q,s}(\Omega)$
with equivalent quasi-norms.
\end{theorem}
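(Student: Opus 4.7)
The plan is to establish both inclusions with equivalent quasi-norms via a stopping-time construction, using the uniformly upper and lower types of $\V$ in a balanced way.

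For the inclusion $\WH\subseteq WH_{\rm at}^{\V,q,s}(\OM)$, given $f\in\WH$ define for each $k\in\zz$ the stopping time
$$\nu^k:=\inf\lf\{n\in\mathbb{Z}_{+}:\ s_{n+1}(f)>2^k\r\},$$
so that $B_{\nu^k}=\{s(f)>2^k\}$. Setting $\mu^k:=2\cdot 2^k\|\mathbf{1}_{B_{\nu^k}}\|_{L^{\V}(\OM)}$ and $a^k:=(f^{\nu^{k+1}}-f^{\nu^k})/\mu^k$, one checks that each $a^k$ is a $(\V,q)_s$-atom: condition (i) of Definition \ref{def-atom} comes from $\ee_n(f^{\nu^{k+1}}-f^{\nu^k})=f_{\nu^{k+1}\wedge n}-f_{\nu^k\wedge n}=0$ on $\{\nu^k\ge n\}$, while condition (ii) follows from
$$s(a^k)^2=\lf[s_{\nu^{k+1}}(f)^2-s_{\nu^k}(f)^2\r]/(\mu^k)^2\le (2^{k+1})^2/(\mu^k)^2\mathbf{1}_{B_{\nu^k}},$$
which yields $s(a^k)\le\|\mathbf{1}_{B_{\nu^k}}\|_{L^{\V}(\OM)}^{-1}\mathbf{1}_{B_{\nu^k}}$ and hence the required $L_{\V}^q(B_{\nu^k})$-bound for every $q\in(1,\fz]$. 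A telescoping argument, using $\nu^k\uparrow\fz$ as $k\to\fz$ on $\{s(f)<\fz\}$ and $\nu^k=0$ for sufficiently negative $k$ on $\{s_1(f)>0\}$, yields $\sum_{k\in\zz}\mu^k a_n^k=f_n$ \MP-a.e. Since $B_{\nu^k}=\{s(f)>2^k\}$, taking $\lambda=\|f\|_{\WH}$ immediately gives $\sup_k\V(B_{\nu^k},2^k/\lambda)\le\sup_\alpha\V(\{s(f)>\alpha\},\alpha/\lambda)\le 1$, whence $\|f\|_{WH_{\rm at}^{\V,q,s}(\OM)}\ls\|f\|_{\WH}$.

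For the reverse inclusion, start with an admissible decomposition $f=\sum_k\mu^k a^k$ satisfying $\sup_k\V(B_{\nu^k},2^k/\lambda)\le 1$. Sublinearity of $s$ gives $s(f)\le\sum_k\mu^k s(a^k)$. For fixed $\alpha\in\D$, pick $k_0\in\zz$ with $2^{k_0}\le\alpha<2^{k_0+1}$, split the sum at $k=k_0$, and estimate each half separately. The tail $\sum_{k>k_0}\mu^k s(a^k)$ is supported in $\bigcup_{k>k_0}B_{\nu^k}$; since $\alpha/2^k<1$ in this regime, the uniformly lower type $p_{\V}^-$ yields
$$\sum_{k>k_0}\V\lf(B_{\nu^k},\alpha/\lambda\r)\ls\sum_{k>k_0}(\alpha/2^k)^{p_{\V}^-}\V\lf(B_{\nu^k},2^k/\lambda\r)\ls 1.$$
For the head $\sum_{k\le k_0}\mu^k s(a^k)$, Chebyshev's inequality in $L^q(\V(\cdot,\alpha/\lambda)\,d\MP)$ combined with Minkowski and the atomic $L_\V^q$-bound reduces the estimate to summing $2^k\V(B_{\nu^k},\alpha/\lambda)^{1/q}$; here $\alpha/2^k\ge 1$ and the uniformly upper type $p_{\V}^+$ converts this into the geometric series $\alpha^{p_{\V}^+/q}\sum_{k\le k_0}2^{k(1-p_{\V}^+/q)}$. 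This is exactly where the hypothesis $q>\max\{p_{\V}^+,1\}$ enters: the exponent $1-p_{\V}^+/q$ is strictly positive, so the series converges at $-\fz$ and sums to $\sim\alpha^{1-p_{\V}^+/q}$, delivering the uniform-in-$\alpha$ bound $\V(\{s(f)>\alpha\},\alpha/\lambda)\ls 1$; a routine rescaling using the uniformly lower type $p_{\V}^-$ then upgrades this to $\|s(f)\|_{\WP}\ls\lambda$.

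The main obstacle is the regime $p_{\V}^+>1$. In the classical case $p_{\V}^+\le 1$ of \cite{hr06,jwp15,Y17} the quasi-triangle inequality in $\WP$ suffices to sum the atoms directly and one may take $q=\fz$; but when $p_{\V}^+>1$ this subadditivity is lost and only the $L^q$-Chebyshev route above closes the estimate, provided $q$ is taken strictly above $p_{\V}^+$. Carefully pairing upper-type bounds (for the small-atom head) with lower-type bounds (for the large-atom tail), following the strategy of \cite[Theorem 3.5]{lyj16}, is the technical heart of the argument.
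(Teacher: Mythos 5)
Your proposal is correct and follows essentially the same strategy as the paper's proof: the identical stopping-time/telescoping construction with $\nu^k=\inf\{n:\ s_{n+1}(f)>2^k\}$ for the inclusion $\WH\subseteq WH_{\rm at}^{\V,q,s}(\Omega)$, and, for the converse, the same split at $2^{k_0}\le\alpha<2^{k_0+1}$ with the uniformly upper type $p_{\V}^{+}$ handling the head $k\le k_0$ and the uniformly lower type $p_{\V}^{-}$ plus the support condition $\{s(a^k)\neq0\}\subseteq B_{\nu^k}$ handling the tail. The only (harmless) deviations are that you sum the head via Minkowski's inequality in $L^q(\V(\cdot,\alpha/\lambda)\,d\MP)$ where the paper uses a weighted H\"older inequality with an auxiliary parameter $\ell\to0^{+}$, and that for $q=\fz$ the Chebyshev step must be run with a finite exponent $r\in(\max\{p_{\V}^{+},1\},\fz)$ via $\|s(a^k)\|_{L_{\V}^{r}(B_{\nu^k})}\le\|s(a^k)\|_{L^{\fz}(\Omega)}$, exactly as the paper does.
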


\begin{proof}
We prove this theorem by two steps.

Step $1)$ Prove $WH_{\rm at}^{\V,q,s}(\Omega)\subseteq\WH$. To prove this,
let $f\in WH_{\rm at}^{\V,q,s}(\Omega)$. Then, by Definition \ref{def-ats},
we know that there exists a sequence of $(\varphi,q)_s$-atoms, $\{a^k\}_{k\in\zz}$,
related to stopping times $\{\nu^k\}_{k\in\zz}$,
such that, for any $n\in\mathbb{Z}_{+}$,
$$f_n=\sum_{k\in\mathbb{Z}}\mu^ka^k_n \quad {\rm \MP-almost\ everywhere},$$
where $\mu^k:=\widetilde{C}2^k\|\mathbf{1}_{B_{\nu^k}}\|_{L^{\varphi}(\Omega)}$ for any $k\in\zz$ and
$\widetilde{C}$ is a positive constant independent of $f$.
By the definitions of $\WH$ and $WH_{\rm at}^{\V,q,s}(\Omega)$, it suffices to prove that,
for any $\alpha,$ $\lambda\in\D,$
\begin{align}\label{ats}
\V\lf(\lf\{x\in\Omega:\ s(f)(x)>\alpha\r\},\frac{\alpha}{\lambda}\r)
\lesssim\sup_{k\in\zz}\V\lf(B_{\nu^k},\frac{2^k}{\lambda}\r).
\end{align}
To this end, for any fixed $\alpha\in\D,$ let $k_0\in\zz$ be such that $2^{k_0}\le\alpha<2^{k_0+1}.$
Combining this and the subadditivity of operator $s$, we conclude that, for any $\lambda\in\D,$
\begin{align*}
\V\lf(\lf\{x\in\Omega:\ s(f)(x)>\alpha\r\},\frac{\alpha}{\lambda}\r)
&\le\V\lf(\lf\{x\in\Omega:\ \sum_{k\in\zz}\mu^ks\lf(a^k\r)(x)
>\alpha\r\},\frac{\alpha}{\lambda}\r)\\
&\le\V\lf(\lf\{x\in\Omega:\ \sum_{k=-\fz}^{k_0-1}\mu^ks\lf(a^k\r)(x)
>2^{k_0-1}\r\},\frac{2^{k_0+1}}{\lambda}\r)\\
&\hs+\V\lf(\lf\{x\in\Omega:\ \sum_{k=k_0}^{\fz}\mu^ks\lf(a^k\r)(x)
>2^{k_0-1}\r\},\frac{2^{k_0+1}}{\lambda}\r)
=:{\rm I_{\alpha,1}+I_{\alpha,2}}.
\end{align*}
Thus, in order to show \eqref{ats}, we only need to estimate ${\rm I_{\alpha,1}}$
and ${\rm I_{\alpha,2}}$.

We first estimate ${\rm I_{\alpha,1}}$. For any $r\in(\max\{p_{\V}^{+},1\},\fz)$ and
$\ell\in(0,1-\frac{\max\{p_{\V}^{+},1\}}{r})$, by
the H\"older inequality, the monotone convergence theorem and the definition of
$L_{\V}^{r}(\Omega)$, we know that, for any $\lambda\in\D$,
\begin{align}\label{I1}
{\rm I_{\alpha,1}}
&\le\frac{1}{2^{(k_0-1){r}}}\dis\int_{\Omega}
\lf[\sum_{k=-\fz}^{k_0-1}\mu^ks\lf(a^k\r)(x)\r]^{r}
\V\lf(x,\frac{2^{k_0+1}}{\lambda}\r)\,d\MP\\
&\le\frac{1}{2^{(k_0-1)r}}\dis\int_{\Omega}
\lf(\sum_{k=-\fz}^{k_0-1}2^{k\ell r'}\r)^{\frac{{r}}{r'}}
\lf\{\sum_{k=-\fz}^{k_0-1}2^{-k\ell{r}}\lf[\mu^ks\lf(a^k\r)(x)\r]^{r}\r\}
\V\lf(x,\frac{2^{k_0+1}}{\lambda}\r)\,d\MP\noz\\
&\le2^{-{r}(k_0-1)(1-\ell)}\lf(1-2^{-\ell r'}\r)^{-r/r'}
\sum_{k=-\fz}^{k_0-1}2^{-k\ell{r}}\lf(\mu^k\r)^{r}
\lf\|s\lf(a^k\r)\r\|_{L_{\V}^{r}(\Omega)}^{r}
\int_{B_{\nu^k}}\V\lf(x,\frac{2^{k_0+1}}{\lambda}\r)\,d\MP\noz.
\end{align}
For the case $q\in(\max\{p_{\V}^{+},1\},\fz)$, let $r:=q$. From \eqref{I1},
the uniformly upper type $p_{\V}^{+}$ property of $\V$ and
the fact that $a^k$ is a $(\varphi,q)_s$-atom for any $k\in\zz$, we deduce that,
for any $\lambda\in\D$,
\begin{align*}
{\rm I_{\alpha,1}}
&\le2^{-{q}(k_0-1)(1-\ell)}\lf(1-2^{-\ell q'}\r)^{-q/q'}
\sum_{k=-\fz}^{k_0-1}2^{-k\ell{q}}\lf(\mu^k\r)^{q}
\lf\|s\lf(a^k\r)\r\|_{L_{\V}^{q}(\Omega)}^{q}
\int_{B_{\nu^k}}\V\lf(x,\frac{2^{k_0+1}}{\lambda}\r)\,d\MP\\
&\le 2^{-{q}(k_0-1)(1-\ell)}\lf(1-2^{-\ell q'}\r)^{-q/q'}
\sum_{k=-\fz}^{k_0-1}2^{-k\ell q}\lf(\widetilde{C}2^k\r)^{q}2^{(k_0+1-k)p_{\V}^{+}}
\sup_{k\in\zz}\V\lf(B_{\nu^k},\frac{2^k}{\lambda}\r),
\end{align*}
which, together with $(1-\ell)q>p_{\V}^{+}$, implies that, for any $\lambda\in\D$,
\begin{align}\label{I10}
{\rm I_{\alpha,1}}
&\le \lf(\widetilde{C}\r)^{q}
2^{-{q}(k_0-1)(1-\ell)}\lf(1-2^{-\ell q'}\r)^{-q/q'}2^{(k_0+1)p_{\V}^{+}}
\sum_{k=-\fz}^{k_0-1}2^{k[(1-\ell)q-p_{\V}^{+}]}
\sup_{k\in\zz}\V\lf(B_{\nu^k},\frac{2^k}{\lambda}\r)\\
&\le \lf(\widetilde{C}\r)^{q}\lf(1-2^{-\ell q'}\r)^{-q/q'}
\lf[1-2^{p_{\V}^{+}-(1-\ell)q}\r]\sup_{k\in\zz}\V\lf(B_{\nu^k},\frac{2^k}{\lambda}\r)\noz.
\end{align}
Letting $\ell\to0^{+}$ in \eqref{I10}, we conclude that, for any given
$q\in(\max\{p_{\V}^{+},1\},\fz)$ and any $\lambda\in\D$,
\begin{align}\label{I11}
{\rm I_{\alpha,1}}\lesssim \lf(1-2^{p_{\V}^{+}-q}\r)  \sup_{k\in\zz}\V\lf(B_{\nu^k},\frac{2^k}{\lambda}\r)\lesssim \sup_{k\in\zz}\V\lf(B_{\nu^k},\frac{2^k}{\lambda}\r).
\end{align}
For the case $q=\fz$, notice that, for any $k\in\zz$,
$\|s(a^k)\|_{L_{\V}^{r}(\Omega)}\le\|s(a^k)\|_{L^{\fz}(\Omega)}$.
Combining this and \eqref{I1}, similarly to the estimation of \eqref{I10},
we know that, for any $r\in(\max\{p_{\V}^{+},1\},\fz)$,
$\ell\in(0,1-\frac{\max\{p_{\V}^{+},1\}}{r})$ and $\lambda\in\D$,
\begin{align*}
{\rm I_{\alpha,1}}
&\lesssim \lf(1-2^{-\ell r'}\r)^{-r/r'}
\lf[1-2^{p_{\V}^{+}-(1-\ell)r}\r]\sup_{k\in\zz}\V\lf(B_{\nu^k},\frac{2^k}{\lambda}\r).
\end{align*}
Letting $r:=p_{\V}^{+}+1$ and $\ell\to0^{+}$ in the above inequality,
we finally find that, for any $\lambda\in\D$,
\begin{align}\label{I12}
{\rm I_{\alpha,1}}\lesssim \sup_{k\in\zz}\V\lf(B_{\nu^k},\frac{2^k}{\lambda}\r).
\end{align}

Now we estimate ${\rm I_{\alpha,2}}$. For any $k\in\zz$, by the definition of $a^k$,
we have
$$\lf\{x\in\Omega:\ s\lf(a^k\r)(x)\not=0\r\}\subseteq B_{v^k}.$$
From this, it follows that
$$0\le s\lf(f_{\alpha,2}\r)\le\sum_{k=k_0}^{\fz}\mu^ks\lf(a^k\r)
=\sum_{k=k_0}^{\fz}\mu^ks\lf(a^k\r)\mathbf{1}_{B_{\nu^k}},$$
which implies that
$$\lf\{x\in\Omega:\ s\lf(f_{\alpha,2}\r)(x)\not=0\r\}\subseteq \bigcup_{k=k_0}^{\fz}B_{v^k}.$$
Combining this, the fact that $\V$ is of uniformly lower type $p_{\V}^{-}$
and of uniformly upper type $p_{\V}^{+}$, we obtain, for any $\lambda\in\D$,
\begin{align}\label{I2}
{\rm I_{\alpha,2}}
&\le\sum_{k=k_0}^{\fz}\V\lf(B_{\nu^k},\frac{2^{k_0+1}}{\lambda}\r)
\lesssim \sum_{k=k_0}^{\fz}2^{p_{\V}^{+}}\V\lf(B_{\nu^k},\frac{2^{k_0}}{\lambda}\r)\\
&\lesssim2^{p_{\V}^{+}}\sum_{k=k_0}^{\fz}2^{(k_0-k)p_{\V}^{-}}
\V\lf(B_{\nu^k},\frac{2^{k}}{\lambda}\r)
\lesssim\sup_{k\in\zz}\V\lf(B_{\nu^k},\frac{2^k}{\lambda}\r).\noz
\end{align}
From \eqref{I11}, \eqref{I12} and \eqref{I2}, it follow that,
for any $\alpha,$ $\lambda\in\D,$
\eqref{ats} holds true, which further implies that
$\|f\|_{\WH}\lesssim\|f\|_{WH_{\rm at}^{\V,q,s}(\Omega)}$.
Thus, we have $WH_{\rm at}^{\V,q,s}(\Omega)\subseteq\WH$. This finishes the proof of Step $1)$.

Step $2)$ Prove $\WH\subseteq WH_{\rm at}^{\V,q,s}(\Omega).$
To this end, let $f\in\WH$. For any $k\in\zz$ and $x\in\Omega$, let
$$\nu^k(x):=\inf\{n\in\nn:\ s_{n+1}(f)(x)>2^k\}\quad \mbox{and} \quad
\mu^k:=2^{k+1}\lf\|\mathbf{1}_{B_{\nu^k}}\r\|_{L^{\V}(\Omega)}.$$
Then $(\nu^k)_{k\in\zz}$ is a sequence of non-decreasing stopping times.
Moreover, for any $k\in\zz$ and $n\in\mathbb{Z}_{+}$,
if $\mu^k\neq0$, let $$a_n^k:=\frac{f_n^{\nu^{k+1}}-f_n^{\nu^{k}}}{\mu^k};$$
otherwise, let $a_n^k:=0$. Then we have
$$f_n=\sum_{k\in\zz}\mu^ka_n^k \quad \MP\mbox{-almost everywhere.}$$
Now we claim that, for any $k\in\zz$, $a^k:=(a_n^k)_{n\in\mathbb{Z}_{+}}$
is a $(\varphi,q)_s$-atom. Indeed, for any $k\in\zz$, it is clear that
$a^k$ is a martingale. When $\nu^k\geq n$, we easy know $a_n^k=0$. Thus,
$a^k$ satisfies Definition \ref{def-atom}(i). Similarly to the proof of
\cite[Theorem 1.4]{xjy18}, we know that, for any $k\in\zz$.
$$\lf\|s(a^k)\r\|_{L^{\fz}(\Omega)}\le \lf\|\mathbf{1}_{B_{\nu^k}}\r\|_{L^{\V}(\Omega)}^{-1}.$$
This implies that $a^k$ is an $L^2(\Omega)$-bounded martingale
and hence $(a_n^k)_{n\in\mathbb{Z}_{+}}$ converges in $L^2(\Omega)$ as $n\to\fz$.
Denoting this limit still by $a^k$,
then $\ee_n(a^k)=a_n^k$ for any $n\in\mathbb{Z}_{+}$.
Moreover, for any given $q\in(0,\fz]$ and any $k\in\zz$,
$$\left\|s(a^k)\right\|_{L_{\varphi}^q(B_{\nu^k})}
\le \|s(a^k)\|_{L^{\fz}(\Omega)}
\le \lf\|\mathbf{1}_{B_{\nu^k}}\r\|_{L^{\V}(\Omega)}^{-1}.$$
Thus, $a^k$ satisfies Definition \ref{def-atom}(ii) and hence $a^k$ is a $(\varphi,q)_s$-atom.
This proves the above claim. On another hand, for any $k\in\zz$, we have
$\{x\in\Omega:\ s(f)(x)>2^k\}=B_{\nu^k}$. From this, it follows that,
for any $\lambda\in\D$,
$$\sup_{k\in\zz}\V\lf(B_{\nu^k},\frac{2^k}{\lambda}\r)
=\sup_{k\in\zz}\V\lf(\{x\in\Omega:\ s(f)(x)>2^k\},\frac{2^k}{\lambda}\r)
\le \sup_{\alpha\in\D}\V\lf(\{x\in\Omega:\ s(f)(x)>\alpha\},\frac{\alpha}{\lambda}\r),$$
which implies that $f\in WH_{\rm at}^{\V,q,s}(\Omega)$ and
$\|f\|_{WH_{\rm at}^{\V,q,s}(\Omega)}\le \|f\|_{\WH}$. This finishes the proof of Step $2)$
and hence of Theorem \ref{thm-atom}.
\end{proof}

\begin{theorem}\label{thm-apq}
Let $\V$ be a Musielak--Orlicz function satisfying Assumption \ref{a1-new}.
Then
$$WP_{\V}(\Omega)=WH_{\rm at}^{\V,\fz,M}(\Omega)\quad \mbox{and} \quad
WQ_{\V}(\Omega)=WH_{\rm at}^{\V,\fz,S}(\Omega)\quad \mbox{with equivalent quasi-norms}.$$
\end{theorem}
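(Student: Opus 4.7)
The plan is to follow the two-step strategy of Theorem \ref{thm-atom}, treating the $M$-atom and $S$-atom cases in parallel; I describe the argument for $WP_\V(\Omega) = WH_{\rm at}^{\V,\fz,M}(\Omega)$ and indicate the modification for the $S$-case. The main obstacle is that membership in $WP_\V(\Omega)$ demands an adapted, nondecreasing dominating sequence $(\lambda_n)$ with $|f_n| \le \lambda_{n-1}$, so in the forward direction each atom must furnish its own $\mathcal{F}_{n-1}$-measurable dominator; this is precisely where the strength of the $q=\fz$ atom condition is exploited.

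For the forward embedding, given an atomic decomposition $f_n = \sum_{k\in\zz} \mu^k a_n^k$ with $\mu^k = \widetilde{C} 2^k \|\mathbf{1}_{B_{\nu^k}}\|_{L^{\V}(\Omega)}$ and $(\V,\fz)_M$-atoms $a^k$, the atom condition together with the vanishing $a_n^k \equiv 0$ on $\{\nu^k \ge n\}$ gives $|\mu^k a_n^k| \le \widetilde{C} 2^k \mathbf{1}_{\{\nu^k \le n-1\}}$, with the indicator being $\mathcal{F}_{n-1}$-measurable since $\nu^k$ is a stopping time. Setting
$$\lambda_n := \widetilde{C} \sum_{k \in \zz} 2^k \mathbf{1}_{\{\nu^k \le n\}},$$
I obtain a nondecreasing nonnegative adapted sequence with $|f_n| \le \lambda_{n-1}$ (in the $S$-atom case, the subadditivity of $S$ yields $S_n(f) \le \sum_k \mu^k S_n(a^k) \le \lambda_{n-1}$, because $d_i a^k \equiv 0$ on $\{\nu^k \ge i\}$ forces $S_n(a^k)$ to be dominated by $\|\mathbf{1}_{B_{\nu^k}}\|_{L^{\V}(\Omega)}^{-1}\mathbf{1}_{\{\nu^k \le n-1\}}$). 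The technical core is then the bound $\|\lambda_\fz\|_{\WP} \lesssim \inf\{\lz: \sup_k \V(B_{\nu^k}, 2^k/\lz) \le 1\}$, where $\lambda_\fz = \widetilde{C} \sum_k 2^k \mathbf{1}_{B_{\nu^k}}$. For fixed $\az \in \D$ and $k_0 \in \zz$ with $2^{k_0} \le \az < 2^{k_0+1}$, I split $\lambda_\fz = \lambda_\fz^{(1)} + \lambda_\fz^{(2)}$ at the index $k_0$: the high-index part satisfies $\{\lambda_\fz^{(2)} \ne 0\} \subseteq \bigcup_{k \ge k_0} B_{\nu^k}$ and is handled by the uniformly lower type $p_\V^-$ of $\V$ exactly as in \eqref{I2}; the low-index part is treated by Chebyshev's inequality with exponent $r > \max\{p_\V^+, 1\}$, H\"older as in \eqref{I1} with parameter $\ell \in (0, 1-\frac{\max\{p_\V^+,1\}}{r})$, and the uniformly upper type $p_\V^+$, the exponents cancelling as in the derivation of \eqref{I10}--\eqref{I11}. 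Combining the two estimates via the quasi-triangle inequality in Remark \ref{tri} yields the desired bound.

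For the reverse embedding, given $f \in WP_\V(\Omega)$ [resp., $WQ_\V(\Omega)$], I select $(\lambda_n) \in \Lambda[WP_\V](f)$ [resp., $\Lambda[WQ_\V](f)$] with $\|\lambda_\fz\|_{\WP}$ arbitrarily close to $\|f\|_{WP_\V(\Omega)}$, and set
$$\nu^k := \inf\{n \in \nn: \lambda_n > 2^k\}, \qquad \mu^k := 2^{k+1}\|\mathbf{1}_{B_{\nu^k}}\|_{L^{\V}(\Omega)}, \qquad a_n^k := \frac{f_n^{\nu^{k+1}} - f_n^{\nu^k}}{\mu^k}$$
(and $a_n^k := 0$ when $\mu^k = 0$). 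Since $(\nu^k)_{k\in\zz}$ is nondecreasing, $a_n^k \equiv 0$ on $\{\nu^k \ge n\}$, and the telescoping $f_n = \sum_k \mu^k a_n^k$ is verified exactly as in Step $2)$ of Theorem \ref{thm-atom}. The key verification of Definition \ref{def-atom}(ii) follows directly from the dominating property: in the $WP$ case, $|f_n^{\nu^{k+1}}| \le \lambda_{(n \wedge \nu^{k+1})-1} \le \lambda_{\nu^{k+1}-1} \le 2^{k+1}$ (by definition of $\nu^{k+1}$) gives $\|M(a^k)\|_{L^{\fz}(\Omega)} \lesssim \|\mathbf{1}_{B_{\nu^k}}\|_{L^{\V}(\Omega)}^{-1}$; in the $WQ$ case, since $d_i(f^{\nu^{k+1}} - f^{\nu^k}) = d_i f \, \mathbf{1}_{\{\nu^k < i \le \nu^{k+1}\}}$, the same reasoning yields $S(f^{\nu^{k+1}} - f^{\nu^k}) \le S_{\nu^{k+1}}(f) \le \lambda_{\nu^{k+1}-1} \le 2^{k+1}$, hence $\|S(a^k)\|_{L^{\fz}(\Omega)} \lesssim \|\mathbf{1}_{B_{\nu^k}}\|_{L^{\V}(\Omega)}^{-1}$. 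Absorbing the implicit constant into $\widetilde{C}$ makes $a^k$ a genuine $(\V,\fz)_M$-atom [resp., $(\V,\fz)_S$-atom], and the identity $B_{\nu^k} = \{\lambda_\fz > 2^k\}$ gives $\sup_{k \in \zz} \V(B_{\nu^k}, 2^k/\lz) \le \sup_{\az \in \D} \V(\{\lambda_\fz > \az\}, \az/\lz)$, so the atomic quasi-norm of $f$ is controlled by $\|\lambda_\fz\|_{\WP}$, which completes the proof of equivalence of quasi-norms.
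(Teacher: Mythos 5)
Your proposal is correct and follows essentially the same route as the paper: the reverse embedding uses the stopping times $\nu^k=\inf\{n:\ \lambda_n>2^k\}$ with the telescoping atoms $a^k_n=(f_n^{\nu^{k+1}}-f_n^{\nu^k})/\mu^k$, and the forward embedding uses the dominating sequence $\lambda_n=\widetilde{C}\sum_k 2^k\mathbf{1}_{\{\nu^k\le n\}}$ together with the splitting at $k_0$ and the estimates of the type \eqref{I12} and \eqref{I2}. The only cosmetic difference is your normalization $\mu^k=2^{k+1}\|\mathbf{1}_{B_{\nu^k}}\|_{L^{\varphi}(\Omega)}$ versus the paper's $3\cdot 2^k\|\mathbf{1}_{B_{\nu^k}}\|_{L^{\varphi}(\Omega)}$, which you correctly resolve by absorbing the factor into $\widetilde{C}$.
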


\begin{proof}
The proof of this theorem is just a slight modification on that of Theorem \ref{thm-atom}.
For the convenience of the reader, we present some details.
We only give the proof for $WP_{\V}(\Omega)$
because the proof for $WQ_{\V}(\Omega)$ is similar.

We first prove $WP_{\V}(\Omega)\subseteq WH_{\rm at}^{\V,\fz,M}(\Omega)$.
To this end, let $f\in WP_{\V}(\Omega)$. For any $k\in\zz$, $n\in\nn$ and $x\in\Omega$, let
$$\nu^k(x):=\lf\{n\in\zz_{+}:\ \lambda_n(x)>2^k\r\},\quad
\mu^k:=3\cdot2^{k}\lf\|\mathbf{1}_{B_{\nu^k}}\r\|_{L^{\varphi}(\Omega)},
\quad \mbox{and}\quad a_n^k:=\frac{f_n^{\nu^{k+1}}-f_n^{\nu^k}}{\mu^k}$$
if $\mu^k\not=0$, otherwise, let $a_n^k:=0$, where
$(\lambda_n)_{n\in\zz_{+}}\in\Lambda[WP_{\varphi}](f)$.
Then, using the same method as that used in the proof of Theorem \ref{thm-atom},
we can prove that, for any $k\in\zz$,
$\|M(a^k)\|_{L^{\fz}(\Omega)}\le \|\mathbf{1}_{B_{\nu^k}}\|_{L^{\varphi}(\Omega)}^{-1}$,
$a^k$ is a $(\varphi,\fz)_M$-atom and
$\|f\|_{WH_{\rm at}^{\V,\fz,M}(\Omega)}\le \|f\|_{WP_{\V}(\Omega)}.$

Conversely, let $f\in WH_{\rm at}^{\V,\fz,M}(\Omega)$. Then there exist a
sequence of $(\varphi,\fz)_M$-atoms, $\{a^k\}_{k\in\zz}$,
related to stopping times $\{\nu^k\}_{k\in\zz}$ and a positive constant $\widetilde{C}$,
independent of $f$, such that, for any $n\in\mathbb{Z}_{+}$,
\begin{align*}
f_n=\sum_{k\in\mathbb{Z}}\widetilde{C}2^k
\|\mathbf{1}_{B_{\nu^k}}\|_{L^{\varphi}(\Omega)}a_n^k
\quad {\rm \MP-almost\ everywhere}.
\end{align*}
For any $n\in\mathbb{Z}_{+}$, let
$\lambda_n:=\sum_{k\in\mathbb{Z}}\widetilde{C}2^k\mathbf{1}_{\{x\in\Omega:\ \nu^k(x)\le n\}}$.
Then, by the definition of $a^k$, we know that
$(\lambda_n)_{n\in\mathbb{Z}_{+}}$
is a nonnegative adapted sequence and, for any $n\in\nn$,
$$|f_n|\le \sum_{k\in\mathbb{Z}}\widetilde{C}
2^k\|\mathbf{1}_{B_{\nu^k}}\|_{L^{\varphi}(\Omega)}
\|a_n^k\|_{L^{\fz}(\Omega)}\mathbf{1}_{\{x\in\Omega:\ \nu^k(x)\le n-1\}}\le \lambda_{n-1}
\quad {\rm \MP-almost\ everywhere}.$$
Now we show $\|\lambda_{\fz}\|_{\WP}\lesssim \|f\|_{WH_{\rm at}^{\V,\fz,M}(\Omega)}$.
For any fixed $\alpha\in\D$, let $k_0\in\zz$ be
such that $2^{k_0}\le\alpha<2^{k_0+1}$.
Similarly to the estimations of \eqref{I12} and \eqref{I2}
via replacing $\mu^ks(a^k)$ by $\widetilde{C}2^k\mathbf{1}_{B_{\nu^k}}$,
we conclude that, for any $\gamma\in\D$,
\begin{align*}
\V\lf(\{x\in\Omega:\ \lambda_{\fz}(x)>\alpha\},\frac{\alpha}{\gamma}\r)
&\le\V\lf(\lf\{x\in\Omega:\ \sum_{k=-\fz}^{k_0-1}\widetilde{C}2^k\mathbf{1}_{B_{\nu^k}}
>2^{k_0-1}\r\},\frac{2^{k_0+1}}{\gamma}\r)\\
&\hs+\V\lf(\lf\{x\in\Omega:\ \sum_{k=k_0}^{\fz}\widetilde{C}2^k\mathbf{1}_{B_{\nu^k}}
>2^{k_0-1}\r\},\frac{2^{k_0+1}}{\gamma}\r)
\lesssim\sup_{k\in\zz}\V\lf(B_{\nu^k},\frac{2^k}{\gamma}\r).
\end{align*}
This implies that $f\in WP_{\V}(\Omega)$ and
$\|f\|_{WP_{\V}(\Omega)}\le \|\lambda_{\fz}\|_{\WP}
\lesssim \|f\|_{WH_{\rm at}^{\V,\fz,M}(\Omega)},$
which completes the proof of Step $2)$ and hence of Theorem \ref{thm-apq}.
\end{proof}

\begin{remark}\label{rmk-cac}
\begin{enumerate}
\item[{\rm (i)}]
For any given $p\in\D$, when $\V(x,t):=t^p$ for any $x\in\Omega$ and $t\in\D$,
in this case, Theorem \ref{thm-atom} with $q=\fz$ for Vilenkin martingales was
investigated by Weisz \cite[Theorem 1]{w98}, and then Theorem \ref{thm-atom} with $q=\fz$
and Theorem \ref{thm-apq} were obtained by Hou and Ren \cite[Theorems 1, 2 and 3]{hr06}.
Observing that Theorem \ref{thm-atom} includes the $q$-atomic characterization
of $\WH$ for any $q\in(\max\{p,1\},\fz)$,
Theorem \ref{thm-atom} generalizes and improves \cite[Theorem 1]{w98}
and \cite[Theorem 1]{hr06}. Moreover, since $\V$ is of wide generality,
we know that Theorem \ref{thm-apq} generalizes \cite[Theorems 2 and 3]{hr06}.

\item[{\rm (ii)}] Let $\Phi$ be an Orlicz function. Theorem \ref{thm-atom}
with $q=\fz$ and Theorem \ref{thm-apq} when $\V(x,t):=\Phi(t)$ for any $x\in\Omega$
and $t\in\D$ were obtained by Jiao et al. \cite[Theorems 2.1 and 2.4]{jwp15}
under some slightly stronger assumptions.
Indeed, \cite[Theorems 2.1 and 2.4]{jwp15} require that
$\Phi\in \mathcal{G}_{\ell}$ for some $\ell\in(0,1]$ and the upper index
$q_{\Phi^{-1}}\in(0,\fz)$ [see \eqref{j} for its definition], however,
Theorems \ref{thm-atom} and \ref{thm-apq} in this case only need
$\Phi\in \mathcal{G}_{\ell}$ for some $\ell\in\D$. Moreover, in
Theorem \ref{thm-atom}, $q\in(\max\{p_{\Phi}^{+},1\},\fz]$ is much more than
the endpoint case $q=\fz$. Therefore, Theorems \ref{thm-atom}
and \ref{thm-apq} generalize and improve \cite[Theorems 2.1 and 2.4]{jwp15}.

\item[{\rm (iii)}] Theorem \ref{thm-atom} with $q=\fz$ and Theorem \ref{thm-apq}
were first proved by Yang \cite[Theorems 3.1, 3.2 and 3.3]{Y17} under
Assumption \ref{a1}. However, Theorems \ref{thm-atom}
and \ref{thm-apq} only need Assumption \ref{a1-new} which is much weaker than Assumption \ref{a1}.
Thus, Theorems \ref{thm-atom} and \ref{thm-apq} indeed
improve \cite[Theorems 3.1, 3.2 and 3.3]{Y17}.

\item[{\rm (iv)}] Let $p(\cdot)$ be a measurable function on $\Omega$ satisfying
$$0<p^{-}:=\inf_{x\in\Omega}p(x)\le p^{+}:=\sup_{x\in\Omega}p(x)<\fz.$$ Let
$\V(x,t):=t^{p(x)}$ for any $x\in\Omega$ and $t\in\D$.
Observe that, in this case, $\V$ is of uniformly lower type $p^{-}$
and of uniformly upper type
$p^{+}$. From this and Remark \ref{rmk-sc}, it follows that Theorems \ref{thm-atom}
and \ref{thm-apq} give the atomic characterizations of weak variable
martingale Hardy spaces, which are also new.
\end{enumerate}
\end{remark}

Now we establish the atomic characterizations
of $WH_{\V}^S(\Omega)$ and $WH_{\V}^M(\Omega)$.
To this end, we need an additional notion.
The stochastic basis $\{\mathcal{F}_n\}_{n\in\mathbb{Z}_{+}}$ is said
to be \emph{regular} if there exists a positive constant $R$ such that, for any $n\in\nn$,
\begin{align}\label{R}
f_n\le Rf_{n-1}
\end{align}
holds true for any nonnegative martingale $(f_n)_{n\in\mathbb{Z}_{+}}.$

The following technical lemma was proved in \cite[Lemma 4.7]{xwyj}.

\begin{lemma}\label{st1}
Let $w:=(w_n)_{n\in\zz_{+}}\in \mathbb{S}^{-}$ be a special weight.
If the stochastic basis $\{\mathcal{F}_n\}_{n\in\mathbb{Z}_{+}}$ is regular,
then, for any nonnegative adapted process $\gamma=(\gamma_n)_{n\in\mathbb{Z}_{+}}$
and any $\lambda\in(\|\gamma_0\|_{L^{\fz}(\Omega)},\fz)$, there exists
a stopping time $\tau_{\lambda}$ such that, for any $n\in\mathbb{Z}_{+}$,
\begin{align*}
\sup_{n\le\tau_{\lambda}(x)}\gamma_n(x)=:M_{\tau_{\lambda}}\gamma(x)\le\lambda,
\quad\quad\forall\,x\in\Omega,
\end{align*}
\begin{align*}
\lf\{x\in\Omega:\ M\gamma(x)>\lambda\r\}\subseteq \lf\{x\in\Omega:\ \tau_{\lambda}(x)<\fz\r\}
\end{align*}
and \begin{align*}
w\lf(\lf\{x\in\Omega:\ \tau_{\lambda}(x)<\fz\r\}\r)\le KR
w\lf(\lf\{x\in\Omega:\ M\gamma(x)>\lambda\r\}\r),
\end{align*}
where $K$ and $R$ are the same as in \eqref{WS} and \eqref{R}, respectively.
Moreover, for any $\lambda_1$, $\lambda_2\in\D$ with $\lambda_1<\lambda_2$,
$\tau_{\lambda_1}\le \tau_{\lambda_2}$.
\end{lemma}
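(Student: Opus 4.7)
The plan is to first form the ordinary hitting time $\sigma_\lambda := \inf\{n \ge 1 :\ \gamma_n > \lambda\}$ (with $\inf \emptyset := \fz$), which is a genuine stopping time, and since $\lambda > \|\gamma_0\|_{L^\fz(\OM)}$ forces $\gamma_0 \le \lambda$ almost surely, one has $\{M\gamma > \lambda\} = \{\sigma_\lambda < \fz\}$. The naive choice $\tau_\lambda := \sigma_\lambda$ fails property (i) because $\gamma_{\sigma_\lambda} > \lambda$ on $\{\sigma_\lambda < \fz\}$; instead I would introduce the ``predictable'' version
\begin{align*}
\tau_\lambda := \inf\lf\{m \in \zz_+ :\ \ee_m\lf[\mathbf{1}_{\{\sigma_\lambda = m+1\}}\r] > 0\r\},
\end{align*}
which is a bona fide stopping time since $\ee_m[\mathbf{1}_{\{\sigma_\lambda = m+1\}}]$ is $\cf_m$-measurable. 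Geometrically, $\tau_\lambda(x) = m$ is the smallest level at which the $\cf_m$-atom containing $x$ meets $\{\sigma_\lambda = m+1\}$; such atoms exist because regularity forces $\{\cf_n\}$ to be purely atomic with branching bounded by $R$.

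Properties (i) and (ii) would then follow by atom chasing. On $\{\tau_\lambda = m\}$, the $\cf_m$-atom of $x$ lies in $\{\sigma_\lambda > m\} \in \cf_m$ (since $\{\sigma_\lambda = m+1\} \subseteq \{\sigma_\lambda > m\}$ and an $\cf_m$-set either contains or is disjoint from each atom), so $\gamma_k(x) \le \lambda$ for $k \le m$. If $M\gamma(x) > \lambda$, then $\sigma_\lambda(x) = n < \fz$, and $\ee_{n-1}[\mathbf{1}_{\{\sigma_\lambda = n\}}](x) > 0$, so $\tau_\lambda(x) \le n - 1 < \fz$. The monotonicity in $\lambda$ would be argued the same way: any $y$ in the $\cf_m$-atom of $x$ witnessing $\tau_{\lambda_2}(x) = m$ satisfies $\gamma_{m+1}(y) > \lambda_2 > \lambda_1$, so if $\sigma_{\lambda_1}(y) = j+1$ with $j \le m$, then the $\cf_j$-atom of $x$ (which contains $y$) meets $\{\sigma_{\lambda_1} = j+1\}$ and hence $\tau_{\lambda_1}(x) \le j \le m$.

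The crux is property (iii), where both the regularity constant $R$ and the $\ss^-$ constant $K$ appear. Regularity applied to the nonnegative martingale $k \mapsto \ee_k[\mathbf{1}_{\{\sigma_\lambda = m+1\}}]$ between times $m$ and $m+1$ yields $\mathbf{1}_{\{\sigma_\lambda = m+1\}} \le R\,\ee_m[\mathbf{1}_{\{\sigma_\lambda = m+1\}}]$. Integrating this over any $\cf_m$-atom that meets $\{\sigma_\lambda = m+1\}$ forces $\ee_m[\mathbf{1}_{\{\sigma_\lambda = m+1\}}] \ge 1/R$ on that atom, hence on all of $\{\tau_\lambda = m\}$; equivalently,
\begin{align*}
\mathbf{1}_{\{\tau_\lambda = m\}} \le R\,\mathbf{1}_{\{\tau_\lambda = m\}}\,\ee_m\lf[\mathbf{1}_{\{\sigma_\lambda = m+1\}}\r].
\end{align*}
Integrating against $w_\fz$, using $w_m = \ee_m w_\fz$ to absorb the $\cf_m$-measurable factors into a conditional expectation, applying the $\ss^-$ bound $w_m \le K w_{m+1}$, and then invoking the $\cf_{m+1}$-measurability of $\{\tau_\lambda = m\} \cap \{\sigma_\lambda = m+1\}$ to pass back to $w_\fz$ would produce
\begin{align*}
w\lf(\{\tau_\lambda = m\}\r) \le KR\cdot w\lf(\{\tau_\lambda = m\} \cap \{\sigma_\lambda = m+1\}\r).
\end{align*}
Summing over $m \ge 0$ and using the disjointness of the sets $\{\sigma_\lambda = m+1\}$ would conclude with $w(\{\tau_\lambda < \fz\}) \le KR\,w(\{\sigma_\lambda < \fz\}) = KR\,w(\{M\gamma > \lambda\})$.

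The hardest part is designing $\tau_\lambda$ so that the two multiplicative penalties combine cleanly: the regularity factor $R$ comes from predicting a future indicator via its conditional expectation one step earlier, while the $\ss^-$ factor $K$ comes from moving the weight between adjacent $\sigma$-algebras. Defining $\tau_\lambda$ through $\ee_m[\mathbf{1}_{\{\sigma_\lambda = m+1\}}]$ places $\{\tau_\lambda = m\}$ in $\cf_m$ and $\{\tau_\lambda = m\} \cap \{\sigma_\lambda = m+1\}$ in $\cf_{m+1}$, which is exactly the measurability profile needed to chain the two inequalities through a single integration.
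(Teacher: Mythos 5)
The paper never proves this lemma: it is imported wholesale from \cite[Lemma 4.7]{xwyj} (``The following technical lemma was proved in \ldots''), so there is no in-paper argument to measure yours against. Judged on its own, your proof is correct and is the standard construction for regular stochastic bases: replace the hitting time $\sigma_\lambda$ by its ``predictable'' version $\tau_\lambda:=\inf\{m:\ \ee_m[\mathbf{1}_{\{\sigma_\lambda=m+1\}}]>0\}$; regularity, applied to the nonnegative martingale $k\mapsto\ee_k[\mathbf{1}_{\{\sigma_\lambda=m+1\}}]$, gives $\ee_m[\mathbf{1}_{\{\sigma_\lambda=m+1\}}]\ge 1/R$ on $\{\tau_\lambda=m\}$; and the $\ss^{-}$ bound $w_m\le Kw_{m+1}$ lets you pass from $\int_{\{\tau_\lambda=m\}\cap\{\sigma_\lambda=m+1\}}w_m\,d\MP$ to $w(\{\tau_\lambda=m\}\cap\{\sigma_\lambda=m+1\})$, since that intersection lies in $\cf_{m+1}$. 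Summing over $m$ and using the disjointness of the sets $\{\sigma_\lambda=m+1\}$ yields exactly the constant $KR$. The measurability bookkeeping you emphasize at the end is indeed the whole point of the construction.

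Two points deserve tightening, neither fatal. First, every conclusion holds only $\MP$-almost everywhere (conditional expectations are defined only up to null sets, and $\gamma_0\le\lambda$ only a.e.), so the lemma's ``for all $x\in\Omega$'' must be read modulo modification on a null set. Second, you lean on the assertion that regularity forces each $\cf_n$ to be purely atomic; that is true only when $\cf_0$ is trivial (or atomic), an assumption the paper never states, and a constant non-atomic filtration is regular with $R=1$. Fortunately all of your atom-chasing can be replaced by the inequality you already use in part (iii): for $B\in\cf_{j}$, regularity gives $\mathbf{1}_{B}\le R\,\ee_{j-1}[\mathbf{1}_{B}]$ a.e., whence $\ee_{j-1}[\mathbf{1}_{B}]\ge1/R$ a.e. on $B$ and, by integrating over the $\cf_{j-1}$-set $\{0<\ee_{j-1}[\mathbf{1}_{B}]<1/R\}$, in fact $\ee_{j-1}[\mathbf{1}_{B}]\in\{0\}\cup[1/R,1]$ a.e. This yields (i) and (ii) directly (if $\sigma_\lambda(x)=j\le m$ then $\tau_\lambda(x)\le j-1<m$), and the monotonicity follows from $\{\sigma_{\lambda_2}=m+1\}\subseteq\bigcup_{j\le m}\{\sigma_{\lambda_1}=j+1\}$ together with the fact that, for nonnegative $g$ and $j\le m$, $\ee_m[g]>0$ implies $\ee_j[g]>0$ a.e. With these substitutions your argument is complete and self-contained.
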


\begin{theorem}\label{thm-atomSM}
Let $q\in(0,\fz)$ and $\V\in\ss^{-}$ be a Musielak--Orlicz
function satisfying Assumption \ref{a1-new}.
If $q\in(\max\{p_{\V}^{+},1\},\fz]$ and
the stochastic basis $\{\mathcal{F}_n\}_{n\in\mathbb{Z}_{+}}$ is regular,
then
$$WH_{\V}^S(\Omega)=WH_{\rm at}^{\V,q,S}(\Omega)
\quad \mbox{and}\quad WH_{\V}^M(\Omega)=WH_{\rm at}^{\V,q,M}(\Omega)
\quad \mbox{ with equivalent quasi-norms.}$$
\end{theorem}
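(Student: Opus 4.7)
The plan is to mimic the two-step structure of the proof of Theorem \ref{thm-atom}, handling $WH_{\V}^S(\Omega)$ and $WH_{\V}^M(\Omega)$ in parallel, and to use Lemma \ref{st1} at exactly the step where conditional-quadratic-variation arguments break down.

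For the inclusion $WH_{\rm at}^{\V,q,S}(\Omega)\subseteq WH_{\V}^S(\Omega)$ (and similarly for $M$), I would reproduce Step $1$ of the proof of Theorem \ref{thm-atom} essentially verbatim. The operator $S$ is subadditive, so given an atomic decomposition $f=\sum_{k\in\zz}\mu^k a^k$ and $\alpha\in\D$ with $2^{k_0}\le\alpha<2^{k_0+1}$, one splits
\begin{align*}
\V\lf(\{x:\ S(f)(x)>\alpha\},\tfrac{\alpha}{\lambda}\r) \le {\rm I}_{\alpha,1}+{\rm I}_{\alpha,2},
\end{align*}
where ${\rm I}_{\alpha,1}$ collects the atoms with $k<k_0$ (estimated by Chebyshev with exponent $r\in(\max\{p_{\V}^{+},1\},\fz)$, H\"older on the weighted sum, the uniformly upper type $p_{\V}^{+}$ of $\V$, and the atomic bound $\|S(a^k)\|_{L^q_\V(B_{\nu^k})}\le\|\mathbf{1}_{B_{\nu^k}}\|_{L^\V}^{-1}$) and ${\rm I}_{\alpha,2}$ collects those with $k\ge k_0$ (estimated using that $S(a^k)$ is supported in $B_{\nu^k}$, since $a^k_n=0$ on $\{\nu^k=\fz\}$ forces $d_i a^k=0$ there, combined with the lower/upper type). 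The identical argument with $M$ replacing $S$ gives the $M$-direction.

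For the converse $WH_{\V}^S(\Omega)\subseteq WH_{\rm at}^{\V,q,S}(\Omega)$, the obstacle is that, unlike $s_{n+1}(f)$ which is $\mathcal{F}_n$-measurable, neither $S_{n+1}(f)$ nor $M_{n+1}(f)$ is predictable, so the naive ``first time $S_{n+1}(f)>2^k$'' is not a stopping time. This is precisely why we assume $\V\in\ss^{-}$ and regularity: I would apply Lemma \ref{st1} to the nonnegative nondecreasing adapted process $\gamma_n:=S_n(f)$ (respectively $\gamma_n:=M_n(f)$) at level $\lambda=2^k$, obtaining stopping times $\nu^k:=\tau_{2^k}$, $k\in\zz$, such that
\begin{align*}
\sup_{n\le\nu^k}S_n(f)\le 2^k,\qquad \V(B_{\nu^k},t)\le KR\,\V(\{x:\ S(f)(x)>2^k\},t).
\end{align*}
Setting $\mu^k:=2^{k+1}\|\mathbf{1}_{B_{\nu^k}}\|_{L^\V(\OM)}$ for the $S$-case (respectively $\mu^k:=3\cdot2^k\|\mathbf{1}_{B_{\nu^k}}\|_{L^\V(\OM)}$ for the $M$-case) and $a^k_n:=\mu_k^{-1}(f_n^{\nu^{k+1}}-f_n^{\nu^k})$, one has the telescoping identity $f_n=\sum_{k\in\zz}\mu^k a^k_n$. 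Using $d_i(f^{\nu^{k+1}}-f^{\nu^k})=d_if\cdot\mathbf{1}_{\nu^k<i\le\nu^{k+1}}$ gives $S(a^k)\le\mu_k^{-1}S_{\nu^{k+1}}(f)\le\mu_k^{-1}2^{k+1}=\|\mathbf{1}_{B_{\nu^k}}\|_{L^\V}^{-1}$, and for the $M$-case the triangle inequality yields $M(a^k)\le\mu_k^{-1}(M_{\nu^{k+1}}(f)+M_{\nu^k}(f))\le\mu_k^{-1}(2^{k+1}+2^k)=\|\mathbf{1}_{B_{\nu^k}}\|_{L^\V}^{-1}$. Since $\|\cdot\|_{L^q_\V(B_{\nu^k})}\le\|\cdot\|_{L^\fz(\OM)}$, the $L^q_\V$-atomic size condition holds for every $q\in(\max\{p_{\V}^{+},1\},\fz]$.

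Finally, from the third inequality of Lemma \ref{st1} together with the monotonicity of $S_n(f)$ and $M_n(f)$, I would conclude
\begin{align*}
\sup_{k\in\zz}\V\lf(B_{\nu^k},\tfrac{2^k}{\lambda}\r)
\le KR\sup_{k\in\zz}\V\lf(\{S(f)>2^k\},\tfrac{2^k}{\lambda}\r)
\le KR\sup_{\az\in\D}\V\lf(\{S(f)>\az\},\tfrac{\az}{\lambda}\r),
\end{align*}
and the analogue with $M(f)$, yielding $\|f\|_{WH_{\rm at}^{\V,q,S}(\OM)}\ls\|f\|_{WH_\V^S(\OM)}$ (resp. the $M$-inequality). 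The genuine novelty, and the main technical point, is the use of Lemma \ref{st1} to manufacture stopping times adapted to the non-predictable processes $S_n(f)$ and $M_n(f)$; the rest is a careful bookkeeping mirroring Theorem \ref{thm-atom} with the support and size bounds rewritten in terms of $S$ and $M$.
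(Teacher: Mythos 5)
Your proposal is correct and follows essentially the same route as the paper's proof: the converse inclusion is obtained by applying Lemma \ref{st1} to the adapted (non-predictable) processes $S_n(f)$ and $M_n(f)$ to manufacture the stopping times $\nu^k=\tau_{2^k}$, constructing $a^k_n=\mu_k^{-1}(f_n^{\nu^{k+1}}-f_n^{\nu^k})$ with the same size bounds, while the forward inclusion repeats the ${\rm I}_{\alpha,1}$/${\rm I}_{\alpha,2}$ estimates from Theorem \ref{thm-atom}. The only cosmetic differences are the order of the two steps and that you spell out the $M$-case normalization $\mu^k=3\cdot2^k\|\mathbf{1}_{B_{\nu^k}}\|_{L^{\V}(\Omega)}$, which the paper leaves as a ``slight modification.''
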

\begin{proof}
We only prove this theorem for $WH_{\V}^S(\Omega)$,
because the proof for $WH_{\V}^M(\Omega)$ only needs a slight modification.
We do this by two steps.

Step $1)$ Prove $WH_{\V}^S(\Omega)\subseteq WH_{\rm at}^{\V,q,S}(\Omega).$
To this end, let $f\in WH_{\V}^S(\Omega)$.
For any $k\in\zz$ and for the nonnegative adapted sequence $\{S_n(f)\}_{n\in\zz_{+}}$,
by Lemma \ref{st1}, we know that there exists a stopping time $\nu^k\in\ct$ such that
\begin{align*}
\lf\{x\in\Omega:\ S(f)(x)>2^k\r\}\subseteq \lf\{x\in\Omega:\ \nu^k(x)<\fz\r\},
\end{align*}
\begin{align}\label{st3}
S_{\nu^k}(f)(x)\le 2^k,
\quad\quad\forall\,x\in\Omega\end{align}
and \begin{align}\label{st4}
\V\lf(\lf\{x\in\Omega:\ \nu^k(x)<\fz\r\},t\r)\le KR
\V\lf(\lf\{x\in\Omega:\ S(f)(x)>2^k\r\},t\r),\quad\quad \forall\,t\in\D,
\end{align}
where $K$ and $R$ are the same as in \eqref{WS} and \eqref{R}, respectively.
Moreover, for any $k\in\zz$, $\nu^k\le\nu^{k+1}$ and $\nu^k\to\fz$ as $k\to\fz.$
For any $k\in\zz$ and $n\in\zz_{+}$, let
$$\mu^k:=2^{k+1}\lf\|\mathbf{1}_{B_{\nu^k}}\r\|_{L^{\varphi}(\Omega)},
\quad\mbox{and}\quad a_n^k:=\frac{f_n^{\nu^{k+1}}-f_n^{\nu^{k}}}{\mu^k}$$
if $\mu^k\not=0$, otherwise, let $a_n^k:=0$.
Then, for any $n\in\nn$, $f_n(x)=\sum_{k\in\zz}\mu^ka_n^k(x)$
for almost every $x\in\Omega$.
Now, we claim that, for any fixed $k\in\zz$, $a^k:=(a_n^k)_{n\in\zz_{+}}$
is a $(\varphi,q)_S$-atom. Indeed, it is clear that $(a_n^k)_{n\in\zz_{+}}$
is a martingale. Moreover, by \eqref{st3}, we know that
\begin{align}\label{atS2}
\lf[S(a^k)\r]^2&=\sum_{n\in\nn}\lf|d_na^k\r|^2
=\frac{1}{(\mu^k)^2}\sum_{n\in\nn}\lf|d_nf^{\nu^{k+1}}-d_nf^{\nu^{k}}\r|^2\\
&=\frac{1}{(\mu^k)^2}\sum_{n\in\nn}\lf|d_nf\mathbf{1}_{\{x\in\Omega:\
\nu^k(x)<n\le\nu^{k+1}(x)\}}\r|^2\le \frac{1}{(\mu^k)^2}\lf[S_{\nu^{k+1}}(f)\r]^2
\le\lf(\frac{2^{k+1}}{\mu^k}\r)^2.\noz
\end{align}
From this, it follows that $a^k$ is an $L^2(\Omega)$-bounded martingale and hence
$(a_n^k)_{n\in\zz_{+}}$ converges in $L^2(\Omega)$ as $n\to\fz$. Denoting its limit still by
$a^k$, then $\ee_n(a^k)=a_n^k.$
For any $n\in\zz_{+}$ and $x\in\{x\in\Omega:\ \nu^k(x)\geq n\}$,
by the definition of $f_n^{\nu^{k}}$, we know that $a_n^k(x)=0.$ Thus, $a^k$
satisfies Definition \eqref{def-atom}(i). From \eqref{atS2}, it follows that
$$\lf\|S(a^k)\r\|_{L_{\varphi}^q(B_{\nu^k})}
\le \lf\|S(a^k)\r\|_{L^{\fz}(\Omega)}
\le \|\mathbf{1}_{B_{\nu^k}}\|_{L^{\varphi}(\Omega)}^{-1},$$
which implies that $a^k$ satisfies Definition \eqref{def-atom}(ii)
and hence $a^k$ is a $(\varphi,q)_S$-atom. This proves the above claim.

Now, we show $f\in WH_{\rm at}^{\V,q,S}(\Omega)$.
From \eqref{st4}, we deduce that, for any $k\in\zz$ and $\lambda\in\D,$
$$\V\lf(B_{\nu^k},\frac{2^k}{\lambda}\r)\le RK
\V\lf(\lf\{x\in\Omega:\ S(f)(x)>2^k\r\},\frac{2^k}{\lambda}\r)
\le RK\sup_{\alpha\in\D}\V\lf(\lf\{x\in\Omega:\ S(f)(x)>\alpha\r\},\frac{\alpha}{\lambda}\r).$$
This implies that $\|f\|_{WH_{\rm at}^{\V,q,S}(\Omega)}\lesssim \|f\|_{WH_{\V}^S(\Omega)},$
which completes the proof of Step $1)$.

Step $2)$ Prove $WH_{\rm at}^{\V,q,S}(\Omega)\subseteq WH_{\V}^S(\Omega)$.
To prove this, let $f\in WH_{\rm at}^{\V,q,S}(\Omega)$. Then there exists a
sequence of triples, $\{\mu^k,a^k,\nu^k\}_{k\in\zz}$, such that
$f=\sum_{k\in\zz}\mu^ka^k$ pointwise, where $\{a^k\}_{k\in\zz}$ are $(\V,q)_S$-atoms,
$\{\nu^k\}_{k\in\zz}$ are the stopping times associated with $\{a^k\}_{k\in\zz}$,
$\mu^k:=\widetilde{C}2^k\|\mathbf{1}_{B_{\nu^k}}\|_{L^{\varphi}(\Omega)}$ for any $k\in\zz$
and $\widetilde{C}$ is a positive constant independent of $f$.

Now, we prove that $f\in WH_{\V}^S(\Omega).$ For any fixed $\alpha\in\D$,
let $k_0\in\zz$ be such that $2^{k_0}\le\alpha< 2^{k_0+1}$.
Then, by the arguments same as in the estimations of
\eqref{I11}, \eqref{I12} and \eqref{I2}, we find that, for any $\lambda\in\D,$
\begin{align*}
\V\lf(\lf\{x\in\Omega:\ S(f)(x)>\alpha\r\},\frac{\alpha}{\lambda}\r)
&\le\V\lf(\lf\{x\in\Omega:\ \sum_{k=-\fz}^{k_0-1}\mu^kS\lf(a^k\r)(x)
>2^{k_0-1}\r\},\frac{2^{k_0+1}}{\lambda}\r)\\
&\hs+\V\lf(\lf\{x\in\Omega:\ \sum_{k=k_0}^{\fz}\mu^kS\lf(a^k\r)(x)
>2^{k_0-1}\r\},\frac{2^{k_0+1}}{\lambda}\r)\\
&\lesssim\sup_{k\in\zz}\V\lf(B_{\nu^k},\frac{2^k}{\lambda}\r),
\end{align*}
which implies that
$\|f\|_{WH_{\V}^S(\Omega)}\lesssim \|f\|_{WH_{\rm at}^{\V,q,S}(\Omega)}$
and hence $f\in WH_{\V}^S(\Omega).$
This finishes the proof of Theorem \ref{thm-atomSM}.
\end{proof}

\begin{remark}\label{rmk-cc}
\begin{enumerate}
\item[{\rm (i)}] Let $\Phi$ be an Orlicz function. Theorem \ref{thm-atomSM} with $q=\fz$,
when $\V(x,t):=\Phi(t)$ for any $x\in\Omega$ and $t\in\D$, was proved
by Jiao et al. \cite[Theorem 2.3]{jwp15} under the regularity assumption and
the assumptions that $\Phi\in \mathcal{G}_{\ell}$ for some $\ell\in(0,1]$
and the upper index $q_{\Phi^{-1}}\in(0,\fz)$.
However, Theorem \ref{thm-atomSM}, in this case, only needs
$\Phi\in \mathcal{G}_{\ell}$ for some $\ell\in\D$
and the regularity condition. Moreover, Theorem \ref{thm-atomSM} includes the
$q$-atomic characterizations for any $q\in(\max\{p_{\V}^{+},1\},\fz)$.
Thus, Theorem \ref{thm-atomSM} generalize and improve \cite[Theorem 2.3]{jwp15}.

\item[{\rm (ii)}] Let $p\in\D$ and $w$ be a special weight. If $\V(x,t):=w(x)t^p$
for any $x\in\Omega$ and $t\in\D$, then Theorems \ref{thm-atom}, \ref{thm-apq}
and \ref{thm-atomSM} give the atomic characterizations of weak weighted martingale
Hardy spaces, which are also new.
\end{enumerate}
\end{remark}

\section{Boundedness of sublinear operators\label{s4}}

In this section, we first obtain the boundedness of sublinear
operators from $WH_{\varphi}^s(\Omega)$
[resp., $WH_{\varphi}^M(\Omega)$, $WH_{\varphi}^S(\Omega)$, $WP_{\varphi}(\Omega)$ or $WQ_{\varphi}(\Omega)$] to $\WP$, and then clarify relations
among these weak martingale Musielak--Orlicz Hardy spaces.

\begin{theorem}\label{thm-sub}
Let $\V$ be a Musielak--Orlicz function satisfying Assumption \ref{a1-new}
and $T$ a sublinear operator satisfying Assumption \ref{a2-new}(i).
If there exists a positive constant $C$ such that,
for any $(\varphi,\fz)_s$-atom $a$ and any $t\in\D$,
\begin{align}\label{sub1}
\V\lf(\lf\{x\in\Omega:\ |T(a)(x)|>0\r\},t\r)\le C\V\lf(B_{\nu},t\r),
\end{align}
where $\nu$ is the stopping time associated with $a$,
then there exists a positive constant $C$ such that, for any $f\in\WH$,
\begin{align}\label{sub20}
\|Tf\|_{\WP}\le C\|f\|_{\WH}.
\end{align}
\end{theorem}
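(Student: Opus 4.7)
The plan is to mimic the two-sided splitting used in Step $1)$ of the proof of Theorem \ref{thm-atom}, but now applied to $T(f)$ rather than $s(f)$. Since $f\in\WH$, Theorem \ref{thm-atom} supplies, for any $q\in(\max\{p_{\V}^{+},1\},\fz]$ (pick $q$ as in Assumption \ref{a2-new}(i)), a decomposition $f_n=\sum_{k\in\zz}\mu^ka_n^k$ with $(\V,q)_s$-atoms $a^k$ associated with stopping times $\nu^k$, $\mu^k=\widetilde{C}2^k\|\mathbf{1}_{B_{\nu^k}}\|_{L^\V(\OM)}$, and with $\sup_{k\in\zz}\V(B_{\nu^k},2^k/\lambda)\le1$ for $\lambda$ arbitrarily close to $\|f\|_{\WH}$. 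By the sublinearity of $T$ and Remark \ref{tri}, for any $\alpha\in\D$, choosing $k_0\in\zz$ with $2^{k_0}\le\alpha<2^{k_0+1}$, one splits
\begin{align*}
\V\lf(\{|Tf|>\alpha\},\tfrac{\alpha}{\lambda}\r)
&\le \V\lf(\lf\{\sum_{k=-\fz}^{k_0-1}\mu^k|T(a^k)|>2^{k_0-1}\r\},\tfrac{2^{k_0+1}}{\lambda}\r)\\
&\hs+\V\lf(\lf\{\sum_{k=k_0}^{\fz}\mu^k|T(a^k)|>2^{k_0-1}\r\},\tfrac{2^{k_0+1}}{\lambda}\r)
=:\mathrm{J}_{\alpha,1}+\mathrm{J}_{\alpha,2}.
\end{align*}

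To estimate $\mathrm{J}_{\alpha,1}$, I would apply Chebyshev's inequality with weight $\V(\cdot,2^{k_0+1}/\lambda)$, followed by the H\"older splitting trick used in \eqref{I1} (inserting the factor $\sum 2^{k\ell r'}$ and its reciprocal, with a small $\ell>0$ to be fixed later so that $(1-\ell)q>p_{\V}^{+}$). This reduces matters to controlling $\|T(a^k)\|_{L^q(\OM,\V(\cdot,t)\,d\MP)}$ for each atom. Here the key input is Assumption \ref{a2-new}(i): since the $(\V,q)_s$-atom bound $\|s(a^k)\|_{L_\V^q(B_{\nu^k})}\le\|\mathbf{1}_{B_{\nu^k}}\|_{L^\V(\OM)}^{-1}$ translates, via the definition of $\|\cdot\|_{L_\V^q(B_{\nu^k})}$, to
$$\lf\|s(a^k)\r\|_{L^q(\OM,\V(\cdot,t)\,d\MP)}\le \V(B_{\nu^k},t)^{1/q}\lf\|\mathbf{1}_{B_{\nu^k}}\r\|_{L^\V(\OM)}^{-1},$$
the boundedness of $T$ from $H_q^s(\OM,\V(\cdot,t)\,d\MP)$ to $L^q(\OM,\V(\cdot,t)\,d\MP)$ yields the same bound for $\|T(a^k)\|_{L^q(\OM,\V(\cdot,t)\,d\MP)}$. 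Plugging in $\mu^k=\widetilde{C}2^k\|\mathbf{1}_{B_{\nu^k}}\|_{L^\V(\OM)}$ cancels the weighted factor and produces $(\mu^k)^q\|\mathbf{1}_{B_{\nu^k}}\|^{-q}=\widetilde{C}^q2^{kq}$; combined with the uniformly upper type $p_{\V}^{+}$ bound $\V(B_{\nu^k},2^{k_0+1}/\lambda)\lesssim 2^{(k_0-k)p_{\V}^{+}}\V(B_{\nu^k},2^k/\lambda)$, the resulting geometric sum $\sum_{k<k_0}2^{k[(1-\ell)q-p_{\V}^{+}]}$ converges and gives $\mathrm{J}_{\alpha,1}\lesssim\sup_{k\in\zz}\V(B_{\nu^k},2^k/\lambda)$, exactly as in \eqref{I10}--\eqref{I11}.

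For $\mathrm{J}_{\alpha,2}$, the hypothesis \eqref{sub1} is the replacement for the support estimate $\{s(a^k)\neq0\}\subseteq B_{\nu^k}$ used in \eqref{I2}: it gives $\V(\{T(a^k)\neq0\},t)\lesssim\V(B_{\nu^k},t)$. Hence subadditivity of $\V(\cdot,t)$ on sets yields
$$\mathrm{J}_{\alpha,2}\le\sum_{k=k_0}^{\fz}\V\lf(\{T(a^k)\neq0\},\tfrac{2^{k_0+1}}{\lambda}\r)
\lesssim\sum_{k=k_0}^{\fz}\V\lf(B_{\nu^k},\tfrac{2^{k_0+1}}{\lambda}\r),$$
and the uniformly upper type $p_{\V}^{+}$ and lower type $p_{\V}^{-}$ estimates absorb the ratio $2^{k_0+1}/2^k$ into a convergent geometric factor $2^{(k_0-k)p_{\V}^{-}}$, yielding $\mathrm{J}_{\alpha,2}\lesssim\sup_{k\in\zz}\V(B_{\nu^k},2^k/\lambda)$. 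Taking the supremum over $\alpha$ and then the infimum over admissible $\lambda$ completes \eqref{sub20}.

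The main obstacle is the weighted atomic $L^q$ estimate: aligning the norm $L_\V^q(B_{\nu^k})$ from Definition \ref{def-atom} with the weighted $L^q(\OM,\V(\cdot,t)\,d\MP)$ hypothesis of Assumption \ref{a2-new}(i), uniformly in $t>0$, so that the factor $\V(B_{\nu^k},t)^{1/q}$ appears and combines correctly with the uniformly upper type control of $\V(B_{\nu^k},2^{k_0+1}/\lambda)$ by $\V(B_{\nu^k},2^k/\lambda)$. Once this matching is performed with the exponent $\ell$ tuned so that $(1-\ell)q>p_{\V}^{+}$, both geometric series converge and the argument parallels Step $1)$ of Theorem \ref{thm-atom} verbatim. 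The separate endpoint case $q=\fz$ is handled, as in \eqref{I12}, by first dominating the $L^q_\V$ norm by the $L^\fz$ norm and then picking any $r\in(\max\{p_{\V}^{+},1\},\fz)$ in the H\"older step (e.g.\ $r=p_{\V}^{+}+1$).
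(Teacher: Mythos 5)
Your outline reproduces the paper's own proof almost step for step: the same splitting at $k_0$, the same H\"older/geometric-series estimate for the low-$k$ part, and the same use of \eqref{sub1} for the high-$k$ part. Two points, however, need repair before the argument closes. First, the decomposition must consist of $(\varphi,\fz)_s$-atoms, not merely $(\varphi,q)_s$-atoms for the finite $q$ of Assumption \ref{a2-new}(i): the hypothesis \eqref{sub1} is assumed only for $(\varphi,\fz)_s$-atoms, so your bound on $\mathrm{J}_{\alpha,2}$ is not licensed for a decomposition into general $q$-atoms. This is harmless once you invoke the specific construction from Step $2)$ of the proof of Theorem \ref{thm-atom} (as the paper does), since the atoms built there satisfy $\|s(a^k)\|_{L^{\fz}(\Omega)}\le\|\mathbf{1}_{B_{\nu^k}}\|_{L^{\V}(\Omega)}^{-1}$ and are therefore simultaneously $(\varphi,\fz)_s$-atoms and $(\varphi,q)_s$-atoms for every $q$; but you should say you are using that decomposition, not an arbitrary one furnished by the atomic characterization.

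Second, Assumption \ref{a2-new}(i) only requires $q\in(p_{\V}^{+},\fz)$, so $q$ may be at most $1$ when $p_{\V}^{+}<1$, and then the H\"older step with conjugate exponent $q'$ is unavailable. The paper treats this as a separate case, replacing H\"older by the $q$-subadditivity $(\sum_k b_k)^q\le\sum_k b_k^q$, after which the same geometric series $\sum_{k<k_0}2^{k(q-p_{\V}^{+})}$ appears (without the auxiliary parameter $\ell$). Your translation of the atomic bound into $\|T(a^k)\|_{L^q(\Omega,\V(\cdot,t)\,d\MP)}\le[\V(B_{\nu^k},t)]^{1/q}\|\mathbf{1}_{B_{\nu^k}}\|_{L^{\V}(\Omega)}^{-1}$ is correct and is exactly how the weighted hypothesis enters; with the two repairs above the proof is complete. (Your closing remark about a separate ``endpoint case $q=\fz$'' is moot here, since the operator exponent in Assumption \ref{a2-new}(i) is always finite; only the atoms are $\fz$-atoms.)
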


\begin{proof}
Let $f\in\WH$. By Step 2) of the proof of Theorem \ref{thm-atom}, we know
that there exists a sequence of
$(\varphi,\fz)_s$-atoms $\{a^k\}_{k\in\zz}$, related to
stopping times $\{\nu^k\}_{k\in\zz}$,
such that, for any $\lambda\in\D$,
$$f=\sum_{k\in\zz}\mu^ka^k \quad \mbox{and} \quad
\sup_{k\in\zz}\V\lf(B_{\nu^k},\frac{2^k}{\lambda}\r)\le
\sup_{\alpha\in\D}\V\lf(\{x\in\Omega:\ s(f)(x)>\alpha\},\frac{\alpha}{\lambda}\r) $$ where
$\mu^k:=\widetilde{C}2^k\|\mathbf{1}_{B_{\nu^k}}\|_{L^{\varphi}(\Omega)}$
for any $k\in\zz$ and $\widetilde{C}$ is a positive constant
independent of $f$. Thus, in order to prove \eqref{sub20},
we only need to prove that, for any $\alpha,$ $\lambda\in\D,$
\begin{align}\label{sub2}
\V\lf(\lf\{x\in\Omega:\ |T(f)(x)|>\alpha\r\},\frac{\alpha}{\lambda}\r)
\lesssim \sup_{k\in\zz}\V\lf(B_{\nu^k},\frac{2^k}{\lambda}\r).
\end{align}
For any fixed $\alpha\in\D$, let $k_0\in\zz$ be such that $2^{k_0}\le\alpha<2^{k_0+1}$.
Then, from the definition of $T$, it follows that, for any $\lambda\in\D$,
\begin{align*}
\V\lf(\lf\{x\in\Omega:\ |T(f)(x)|>\alpha\r\},\frac{\alpha}{\lambda}\r)
&\lesssim \V\lf(\lf\{x\in\Omega:\ \sum_{k=-\fz}^{k_0-1}\mu^k\lf|T\lf(a^k\r)(x)\r|
>\frac{\alpha}{2}\r\},\frac{\alpha}{\lambda}\r) \\
&\hs+\V\lf(\lf\{x\in\Omega:\ \sum_{k=k_0}^{\fz}\mu^k\lf|T\lf(a^k\r)(x)\r|>
\frac{\alpha}{2}\r\},\frac{\alpha}{\lambda}\r)\\
&\lesssim \V\lf(\lf\{x\in\Omega:\ \sum_{k=-\fz}^{k_0-1}\mu^k\lf|T\lf(a^k\r)(x)\r|
>2^{k_0-1}\r\},\frac{2^{k_0+1}}{\lambda}\r) \\
&\hs+\V\lf(\lf\{x\in\Omega:\ \sum_{k=k_0}^{\fz}\mu^k\lf|T\lf(a^k\r)(x)\r|>
2^{k_0-1}\r\},\frac{2^{k_0+1}}{\lambda}\r)=:{\rm I_1}+{\rm I_2}.
\end{align*}
Thus, to show \eqref{sub2}, we only need to estimate
${\rm I_1}$ and ${\rm I_2}$, respectively.

To estimate ${\rm I_1}$, we consider two cases.

Case $1)$ $q\in(1,\fz)\cap (p_{\V}^{+},\fz)$. In this case,
for any $\ell\in(0,1-\frac{p_{\V}^{+}}{q})$, by the H\"older inequality and
the boundedness of $T$, we know that, for any $\lambda\in\D,$
\begin{align*}
{\rm I_1} &\lesssim\frac{1}{2^{(k_0-1)q}}\dis\int_{\Omega}\lf[\sum_{k=-\fz}^{k_0-1}\mu^k
\lf|T\lf(a^k\r)(x)\r|\r]^q\V\lf(x,\frac{2^{k_0+1}}{\lambda}\r)\,d\MP\\
&\lesssim\frac{1}{2^{(k_0-1)q}}\lf(\sum_{k=-\fz}^{k_0-1}2^{k\ell q'}\r)^{\frac{q}{q'}}
\dis\int_{\Omega}\sum_{k=-\fz}^{k_0-1}2^{-k\ell q}\lf(\mu^k\r)^q
\lf|T\lf(a^k\r)(x)\r|^q\V\lf(x,\frac{2^{k_0+1}}{\lambda}\r)\,d\MP\\
&\lesssim 2^{-{q}(k_0-1)(1-\ell)}\lf(1-2^{-\ell q'}\r)^{-q/q'}
\sum_{k=-\fz}^{k_0-1}2^{-k\ell q}\lf(\mu^k\r)^q
\dis\int_{\Omega}\lf|s(a^k)(x)\r|^q\V\lf(x,\frac{2^{k_0+1}}{\lambda}\r)\,d\MP.
\end{align*}
From this, $q(1-\ell)>p_{\V}^{+}$ and the fact that $a^k$ is a
$(\varphi,\fz)_s$-atom for any $k\in\zz$,
we deduce that, for any $\lambda\in\D,$
\begin{align*}
{\rm I_1}
&\lesssim 2^{-{q}(k_0-1)(1-\ell)}\lf(1-2^{-\ell q'}\r)^{-q/q'}
\sum_{k=-\fz}^{k_0-1}2^{-k\ell q}\lf(\mu^k\r)^q
\lf\|s(a^k)\r\|^q_{L^{\fz}(B_{\nu^k})}\V\lf(B_{\nu^k},\frac{2^{k_0+1}}{\lambda}\r)\\
&\lesssim 2^{-{q}(k_0-1)(1-\ell)}\lf(1-2^{-\ell q'}\r)^{-q/q'}
\sum_{k=-\fz}^{k_0-1}2^{-k\ell q}2^{kq}2^{(k_0+1-k)p_{\V}^{+}}
\V\lf(B_{\nu^k},\frac{2^{k}}{\lambda}\r)\\
&\lesssim \lf(1-2^{-\ell q'}\r)^{-q/q'}
\lf[1-2^{p_{\V}^{+}-(1-\ell)q}\r]\sup_{k\in\zz}\V\lf(B_{\nu^k},\frac{2^k}{\lambda}\r).
\end{align*}
Letting $\ell\to0$ in above inequality, we conclude that, for any $\lambda\in\D$,
\begin{align}\label{sub3}
{\rm I_1}\lesssim\sup_{k\in\zz}\V\lf(B_{\nu^k},\frac{2^k}{\lambda}\r).
\end{align}

Case $2)$ $q\in(0,1]\cap(p_{\V}^{+},\fz)$. From the boundedness of $T$
and the uniformly upper type $p_{\V}^{+}$ property of $\V$, it follows that,
for any $\lambda\in\D$,
\begin{align}\label{sub31}
{\rm I_1}
&\lesssim\frac{1}{2^{(k_0-1)q}}\dis\int_{\Omega}\lf[\sum_{k=-\fz}^{k_0-1}\mu^k
\lf|T\lf(a^k\r)(x)\r|\r]^q\V\lf(x,\frac{2^{k_0+1}}{\lambda}\r)\,d\MP\\
&\lesssim\frac{1}{2^{(k_0-1)q}}\sum_{k=-\fz}^{k_0-1}\lf(\mu^k\r)^q\dis\int_{\Omega}
\lf|T\lf(a^k\r)(x)\r|^q\V\lf(x,\frac{2^{k_0+1}}{\lambda}\r)\,d\MP\noz\\
&\lesssim \frac{1}{2^{(k_0-1)q}}\sum_{k=-\fz}^{k_0-1}\lf(\mu^k\r)^q\lf\|s(a^k)\r\|^q_{L^{\fz}(\Omega)}
\V\lf(B_{\nu^k},\frac{2^{k_0+1}}{\lambda}\r)\noz\\
&\lesssim \frac{1}{2^{(k_0-1)q}}\sum_{k=-\fz}^{k_0-1}2^{kq}2^{(k_0+1-k)p_{\V}^{+}}
\sup_{k\in\zz}\V\lf(B_{\nu^k},\frac{2^{k}}{\lambda}\r)
\sim \sup_{k\in\zz}\V\lf(B_{\nu^k},\frac{2^k}{\lambda}\r).\noz
\end{align}

Now we estimate ${\rm I_2}.$ Clearly,
$$\lf\{x\in\Omega:\ \sum_{k=k_0}^{\fz}\mu^k\lf|T\lf(a^k\r)(x)\r|>2^{k_0-1}\r\}
\subseteq\bigcup_{k=k_0}^{\fz}\lf\{x\in\Omega:\ \lf|T\lf(a^k\r)(x)\r|>0\r\}.$$
Combining this, \eqref{sub1} and the fact that $\V$ is of uniformly lower type $p_{\V}^{-}$
and of uniformly upper type $p_{\V}^{+}$,
we find that, for any $\lambda\in\D,$
\begin{align*}
{\rm I_2}
&\lesssim \sum_{k=k_0}^{\fz}\V\lf(\lf\{x\in\Omega:\ \lf|T\lf(a^k\r)(x)\r|>0\r\},
\frac{2^{k_0+1}}{\lambda}\r)\\
&\lesssim 2^{p_{\V}^{+}}\sum_{k=k_0}^{\fz}\V\lf(B_{\nu^k},\frac{2^{k_0}}{\lambda}\r)
\lesssim \sum_{k=k_0}^{\fz}2^{(k_0-k)p_{\V}^{-}}\V\lf(B_{\nu^k},\frac{2^{k}}{\lambda}\r)
\lesssim \sup_{k\in\zz}\V\lf(B_{\nu^k},\frac{2^k}{\lambda}\r),
\end{align*}
which, together with \eqref{sub3} and \eqref{sub31},
further implies that \eqref{sub2} holds true.
This finishes the proof of Theorem \ref{thm-sub}.
\end{proof}

Using Theorems \ref{thm-apq} and \ref{thm-atomSM}, we can also
show that the sublinear operator $T$ is bounded from $WP_{\V}(\Omega)$
[resp., $WQ_{\V}(\Omega)$, $WH_{\V}^S(\Omega)$ or
$WH_{\V}^M(\Omega)$] to $\WP$, whose proofs are
similar to that of Theorem \ref{thm-sub}, the details being omitted.

\begin{theorem}\label{thm-subpq}
Let $\V$ be a Musielak--Orlicz function satisfying Assumption \ref{a1-new} and
$T$ a sublinear operator satisfying Assumption \ref{a2-new}(ii)
(resp., Assumption \ref{a2-new}(iii)). If there exists a positive constant $C$ such that,
for any $(\varphi,\fz)_S$-atom (resp., $(\varphi,\fz)_M$-atom) $a$ and any $t\in\D$,
\begin{align}\label{subpq}
\V\lf(\lf\{x\in\Omega:\ |T(a)(x)|>0\r\},t\r)\le C\V\lf(B_{\nu},t\r),
\end{align}
where $\nu$ is the stopping time associated with $a$,
then there exists a positive constant $C$ such that,
for any $f\in WQ_{\V}(\Omega)$ [resp., $f\in WP_{\V}(\Omega)$],
$$
\|Tf\|_{\WP}\le C\|f\|_{WQ_{\V}(\Omega)}\quad \quad
\lf[\mbox{resp., }  \|Tf\|_{\WP}\le C\|f\|_{WP_{\V}(\Omega)} \r].
$$
\end{theorem}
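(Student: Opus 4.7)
The plan is to transcribe the argument of Theorem \ref{thm-sub} with the obvious substitutions, exploiting the atomic characterization of Theorem \ref{thm-apq}. I will describe the case $f \in WQ_{\V}(\Omega)$ under Assumption \ref{a2-new}(ii); the case $f \in WP_{\V}(\Omega)$ under Assumption \ref{a2-new}(iii) is identical, replacing $S$ by $M$ throughout.

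First I would invoke Theorem \ref{thm-apq} to write $WQ_\V(\Omega) = WH_{\rm at}^{\V,\fz,S}(\Omega)$ and decompose $f_n = \sum_{k\in\zz}\mu^k a_n^k$ pointwise, where each $a^k$ is a $(\V,\fz)_S$-atom associated with a stopping time $\nu^k$ and $\mu^k = \widetilde{C}\,2^k\|\mathbf{1}_{B_{\nu^k}}\|_{L^\V(\Omega)}$, with $\sup_{k}\V(B_{\nu^k},2^k/\lambda)$ controlled by $\|f\|_{WQ_\V(\Omega)}$ in the quasi-norm sense. It then suffices to prove that, for all $\alpha,\lambda\in\D$,
\[
\V\lf(\lf\{x\in\Omega:\ |T(f)(x)|>\alpha\r\},\frac{\alpha}{\lambda}\r)\lesssim \sup_{k\in\zz}\V\lf(B_{\nu^k},\frac{2^k}{\lambda}\r).
\]
Fix $\alpha$, choose $k_0$ with $2^{k_0}\le\alpha<2^{k_0+1}$, and use sublinearity of $T$ together with Remark \ref{tri} to split the left side into a low-frequency piece ${\rm J_1}$ (sum over $k<k_0$) and a high-frequency piece ${\rm J_2}$ (sum over $k\geq k_0$), just as ${\rm I_1}$ and ${\rm I_2}$ appear in the proof of Theorem \ref{thm-sub}.

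For ${\rm J_1}$, I would apply Chebyshev's inequality in the weighted Lebesgue space $L^q(\Omega,\V(\cdot,2^{k_0+1}/\lambda)\,d\MP)$, then use the boundedness of $T$ from $H_q^S(\Omega,\V(\cdot,t)\,d\MP)$ to $L^q(\Omega,\V(\cdot,t)\,d\MP)$ provided by Assumption \ref{a2-new}(ii). Since $a^k$ is a $(\V,\fz)_S$-atom, its quadratic variation satisfies $\|S(a^k)\|_{L^\fz(\Omega)}\le\|\mathbf{1}_{B_{\nu^k}}\|_{L^\V(\Omega)}^{-1}$ and vanishes outside $B_{\nu^k}$, giving
\[
\int_\Omega|T(a^k)(x)|^q\V\lf(x,\tfrac{2^{k_0+1}}{\lambda}\r)\,d\MP \lesssim \|\mathbf{1}_{B_{\nu^k}}\|_{L^\V(\Omega)}^{-q}\V\lf(B_{\nu^k},\tfrac{2^{k_0+1}}{\lambda}\r).
\]
Distinguishing $q\in(p_\V^+,1]$ (direct summation) from $q\in(\max\{p_\V^+,1\},\fz)$ (Hölder in $k$ with an auxiliary $\ell\in(0,1-p_\V^+/q)$ followed by $\ell\to 0^+$), and combining with the uniformly upper type $p_\V^+$ property of $\V$ to dominate $\V(B_{\nu^k},2^{k_0+1}/\lambda)$ by $2^{(k_0+1-k)p_\V^+}\V(B_{\nu^k},2^k/\lambda)$, the geometric series converges exactly as in \eqref{I10}--\eqref{I12} and \eqref{sub31}, yielding ${\rm J_1}\lesssim\sup_k\V(B_{\nu^k},2^k/\lambda)$.

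For ${\rm J_2}$, I would use the containment $\{x:\sum_{k\geq k_0}\mu^k|T(a^k)(x)|>0\}\subseteq\bigcup_{k\geq k_0}\{x:|T(a^k)(x)|>0\}$ and hypothesis \eqref{subpq} to bound this by $\sum_{k\geq k_0}\V(B_{\nu^k},2^{k_0+1}/\lambda)$. Then the uniformly upper type $p_\V^+$ and uniformly lower type $p_\V^-$ properties of $\V$ give
\[
{\rm J_2}\lesssim 2^{p_\V^+}\sum_{k\geq k_0}2^{(k_0-k)p_\V^-}\V\lf(B_{\nu^k},\tfrac{2^k}{\lambda}\r)\lesssim \sup_{k\in\zz}\V\lf(B_{\nu^k},\tfrac{2^k}{\lambda}\r),
\]
paralleling \eqref{I2}. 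Combining ${\rm J_1}$ and ${\rm J_2}$ closes the argument.

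The main obstacle is essentially bookkeeping, not substance: the argument is a line-by-line transcription of the proof of Theorem \ref{thm-sub}. The only subtlety worth double-checking is that the $(\V,\fz)_S$-atom property supplies the $L^\fz$-bound on $S(a^k)$ uniformly in the weight $\V(\cdot,t)$, so that the estimate on $\int|T(a^k)|^q\V(\cdot,t)\,d\MP$ via Assumption \ref{a2-new}(ii) carries through for the particular value $t=2^{k_0+1}/\lambda$ used at each scale. No regularity of $\{\mathcal{F}_n\}_{n\in\zz_+}$ is needed here because the decomposition comes from Theorem \ref{thm-apq}, which is regularity-free. The $WP_\V(\Omega)$ case requires only the substitutions $S\leadsto M$, Assumption \ref{a2-new}(ii)$\leadsto$(iii), and $(\V,\fz)_S$-atom$\leadsto(\V,\fz)_M$-atom.
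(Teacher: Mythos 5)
Your proposal is correct and follows exactly the route the paper indicates for this theorem: the paper omits the details, stating only that the proof is obtained from that of Theorem \ref{thm-sub} by substituting the atomic decompositions of Theorem \ref{thm-apq} (with $s$, $(\varphi,\fz)_s$-atoms and Assumption \ref{a2-new}(i) replaced by $S$ or $M$, $(\varphi,\fz)_S$- or $(\varphi,\fz)_M$-atoms and Assumption \ref{a2-new}(ii) or (iii)), which is precisely your transcription. Your pairing of (ii) with $WQ_{\V}(\Omega)$ and (iii) with $WP_{\V}(\Omega)$, the support observation $\{S(a^k)\neq0\}\subseteq B_{\nu^k}$, and the remark that no regularity is needed are all consistent with the paper.
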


\begin{theorem}\label{thm-subSM}
Let $\V\in\ss^{-}$ be a Musielak--Orlicz function satisfying
Assumption \ref{a1-new} and
$T$ a sublinear operator satisfying Assumption \ref{a2-new}(ii)
(resp., Assumption \ref{a2-new}(iii)). If the stochastic basis
$\{\mathcal{F}_n\}_{n\in\mathbb{Z}_{+}}$ is regular and
there exists a positive constant $C$ such that, for any $(\varphi,\fz)_S$-atom
[resp., $(\varphi,\fz)_M$-atom] $a$ and any $t\in\D$,
\begin{align*}
\V\lf(\lf\{x\in\Omega:\ |T(a)(x)|>0\r\},t\r)\le C\V\lf(B_{\nu},t\r),
\end{align*}
where $\nu$ is the stopping time associated with $a$,
then there exists a positive constant
$C$ such that, for any $f\in WH_{\V}^M(\Omega)$ [resp., $f\in WH_{\V}^S(\Omega)$],
$$
\|Tf\|_{\WP}\le C\|f\|_{WH_{\V}^M(\Omega)}
\quad\quad
\lf[\mbox{resp., }  \|Tf\|_{\WP}\le C\|f\|_{WH_{\V}^S(\Omega)} \r].
$$
\end{theorem}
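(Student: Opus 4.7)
The plan is to deduce Theorem~\ref{thm-subSM} by combining the atomic characterization in Theorem~\ref{thm-atomSM} with the classical Burkholder-type pointwise equivalence $S(g)\sim M(g)$ for martingales on regular filtrations, thereby reducing the ``cross-pairing'' of hypothesis and conclusion to the ``same-letter'' argument already carried out in the proof of Theorem~\ref{thm-sub}. I sketch Case~A (Assumption~\ref{a2-new}(ii), size hypothesis on $(\V,\fz)_S$-atoms, conclusion on $WH_{\V}^M(\Omega)$); Case~B is entirely analogous with the roles of $S$ and $M$ interchanged and Assumption~\ref{a2-new}(iii) in place of (ii).

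First I would construct, for each $f\in WH_{\V}^M(\Omega)$, a $(\V,\fz)_S$-atomic decomposition whose atomic weak quasi-norm is controlled by $\|f\|_{WH_{\V}^M(\Omega)}$. Applying Lemma~\ref{st1} to the nonnegative adapted sequence $\{|f_n|\}_{n\in\mathbb{Z}_{+}}$ (legitimate since $\V\in\ss^-$ and the filtration is regular) yields, for each $k\in\zz$, a stopping time $\nu^k$ satisfying $\sup_{n\le\nu^k}|f_n|\le 2^k$, $\{x\in\Omega:\ M(f)(x)>2^k\}\subseteq B_{\nu^k}$, and $\V(B_{\nu^k},t)\lesssim \V(\{x\in\Omega:\ M(f)(x)>2^k\},t)$ for every $t\in\D$. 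Setting $\mu^k:=2^{k+1}\|\mathbf{1}_{B_{\nu^k}}\|_{L^\V(\Omega)}$ and $a_n^k:=(f_n^{\nu^{k+1}}-f_n^{\nu^k})/\mu^k$ (with $a_n^k:=0$ when $\mu^k=0$), one checks exactly as in the proof of Theorem~\ref{thm-atomSM} that $a_n^k=0$ on $\{\nu^k\ge n\}$, that $f_n=\sum_{k\in\zz}\mu^k a_n^k$ $\MP$-almost everywhere, and that $M(a^k)\le 2\cdot 2^{k+1}/\mu^k$. The key new ingredient is the pointwise Burkholder inequality $S(g)\le C_R M(g)$, valid for every martingale $g$ on a regular filtration (with $C_R$ depending only on the regularity constant $R$). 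Applying this to $g=a^k$ and absorbing $C_R$ into the atom normalization gives $\|S(a^k)\|_{L^\fz(\Omega)}\le \|\mathbf{1}_{B_{\nu^k}}\|_{L^\V(\Omega)}^{-1}$, so each $a^k$ is a $(\V,\fz)_S$-atom (hence a $(\V,q)_S$-atom for any $q$), and the Lemma~\ref{st1} estimate produces
\begin{align*}
\sup_{k\in\zz}\V\Bigl(B_{\nu^k},\frac{2^k}{\lambda}\Bigr)
\lesssim \sup_{\alpha\in\D}\V\Bigl(\{x\in\Omega:\ M(f)(x)>\alpha\},\frac{\alpha}{\lambda}\Bigr),\quad \lambda\in\D.
\end{align*}

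With the $(\V,\fz)_S$-atomic decomposition of $f$ in hand, Step~2 is essentially a verbatim repetition of the proof of Theorem~\ref{thm-sub}, with $s$ replaced throughout by $S$ and the hypothesis ``$T\colon H_q^s\to L^q$ is bounded'' replaced by Assumption~\ref{a2-new}(ii). For fixed $\alpha\in\D$, choose $k_0\in\zz$ with $2^{k_0}\le\alpha<2^{k_0+1}$ and split $Tf=\sum_{k<k_0}\mu^k T(a^k)+\sum_{k\ge k_0}\mu^k T(a^k)$. The low-index sum is controlled by H\"older's inequality with exponent $q\in(\max\{p_{\V}^{+},1\},\fz)$ together with the boundedness of $T$ from $H_q^S(\Omega,\V(\cdot,t)\,d\MP)$ to $L^q(\Omega,\V(\cdot,t)\,d\MP)$ and $\|S(a^k)\|_{L^\fz(\Omega)}\le\|\mathbf{1}_{B_{\nu^k}}\|_{L^\V(\Omega)}^{-1}$, reproducing the chain of estimates leading to \eqref{sub3} (with the $q=\fz$ endpoint handled as in \eqref{I12}). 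The high-index sum is controlled by the size hypothesis $\V(\{x\in\Omega:\ |T(a^k)(x)|>0\},t)\lesssim \V(B_{\nu^k},t)$ combined with the uniformly lower type $p_{\V}^{-}$ and uniformly upper type $p_{\V}^{+}$ properties of $\V$, exactly as in \eqref{I2}. Summing and invoking the Step~1 estimate gives $\V(\{|Tf|>\alpha\},\alpha/\lambda)\lesssim \|f\|_{WH_{\V}^M(\Omega)}$-type control, whence $\|Tf\|_\WP\lesssim \|f\|_{WH_{\V}^M(\Omega)}$.

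The principal obstacle is the pointwise regular-filtration inequality $S(g)\le C_R M(g)$ used in Step~1. It is a classical fact for regular filtrations (see, e.g., Weisz~\cite{W94}), and since it is a pointwise statement independent of the weight, it transfers to the Musielak--Orlicz setting without modification. For Case~B, one instead starts from $f\in WH_\V^S(\Omega)$, constructs $(\V,\fz)_S$-atoms via $S$-based stopping times as in Step~1 of the proof of Theorem~\ref{thm-atomSM}, invokes the converse regular-filtration inequality $M(g)\le C_R S(g)$ to conclude that these are also $(\V,\fz)_M$-atoms, and then applies the Theorem~\ref{thm-sub}-style argument (with $s$ replaced by $M$) using Assumption~\ref{a2-new}(iii) and the size hypothesis on $(\V,\fz)_M$-atoms.
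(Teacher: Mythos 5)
Your Step 2 is fine and is exactly the strategy the paper indicates (rerun the proof of Theorem \ref{thm-sub} with $s$ replaced by $S$ or $M$, using Assumption \ref{a2-new}(ii) or (iii) and the atom size condition), but Step 1 rests on a false claim, and this is a genuine gap. There is no pointwise inequality $S(g)\le C_R M(g)$ for martingales on a regular filtration, nor the converse $M(g)\le C_R S(g)$ that you invoke in Case B. Counterexample: take the dyadic filtration on $[0,1)$ (regular with $R=2$) and, for fixed $N\in\nn$, the martingale with $d_if:=r_i$ (the $i$-th Rademacher function) for $i\le N$ and $d_if:=0$ for $i>N$. On the set where the first $N$ signs alternate (probability $2^{-N}>0$) one has $M(f)=1$ while $S(f)=\sqrt N$; on the set where the first $N$ signs are all $+1$ one has $M(f)=N$ while $S(f)=\sqrt N$. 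So neither pointwise bound can hold with a constant depending only on $R$. What regularity gives pointwise is $|d_nf|^2\le R\,\ee_{n-1}(|d_nf|^2)$, hence $S(f)\le\sqrt{R}\,s(f)$ (this is \cite[Lemma 2.18]{W94}, used in the proof of Theorem \ref{thm-mi}); any comparison between $S$ and $M$ is only at the level of norms or distribution functions, and in the weighted setting (Theorem \ref{lem-bl}(i)) it moreover requires $\V\in\mathbb{A}_{\fz}(\Omega)\cap\ss(\Omega)$, not merely $\V\in\ss^{-}$. Consequently your construction collapses: the stopped differences $f^{\nu^{k+1}}-f^{\nu^k}$ built from $M$-based stopping times have controlled maximal function, but their square function need not be bounded, so they are not $(\V,\fz)_S$-atoms, and the same objection kills Case B.

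The paper's (omitted) proof never needs such a bridge, because it uses Theorem \ref{thm-atomSM} in the ``same-letter'' way: for $f\in WH_{\V}^S(\Omega)$, Lemma \ref{st1} applied to $\{S_n(f)\}_{n\in\zz_{+}}$ produces stopping times whose atoms satisfy $\|S(a^k)\|_{L^{\fz}(\Omega)}\le\|\mathbf{1}_{B_{\nu^k}}\|_{L^{\V}(\Omega)}^{-1}$ directly [see \eqref{atS2}], with weak-norm control from \eqref{st4}; one then runs the argument of Theorem \ref{thm-sub} using Assumption \ref{a2-new}(ii) and the size condition on $(\V,\fz)_S$-atoms, obtaining $\|Tf\|_{\WP}\ls\|f\|_{WH_{\V}^S(\Omega)}$. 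Likewise $f\in WH_{\V}^M(\Omega)$ is decomposed directly into $(\V,\fz)_M$-atoms via Lemma \ref{st1} applied to $\{|f_n|\}_{n\in\zz_{+}}$, and one uses Assumption \ref{a2-new}(iii). In other words, the provable pairing matches $S$-hypotheses with the conclusion on $WH_{\V}^S(\Omega)$ and $M$-hypotheses with $WH_{\V}^M(\Omega)$; the crossed pairing in the printed statement is best read as a misprint --- note that the companion Theorem \ref{thm-subpq} is paired same-letter, since $WQ_{\V}(\Omega)=WH_{\rm at}^{\V,\fz,S}(\Omega)$ and $WP_{\V}(\Omega)=WH_{\rm at}^{\V,\fz,M}(\Omega)$ --- rather than something to be bridged by a pointwise $S\sim M$ equivalence, which does not exist; indeed, under only $\V\in\ss^{-}$ and regularity (without $\mathbb{A}_{\fz}$), the crossed version does not appear to be provable at all.
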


\begin{remark}\label{rmk-su}
\begin{enumerate}
\item[{\rm(i)}] For any given $p\in\D$,
when $\V(x,t):=t^p$ for any $x\in\Omega$ and $t\in\D$, Theorem \ref{thm-sub} for
Vilenkin martingales was originally obtained by Weisz \cite[Theorem 2]{w98}. Then
Theorems \ref{thm-sub} and \ref{thm-subpq}, in this case, were proved by
Hou and Ren \cite[Theorems 4, 5 and 6]{hr06}. Observe that the assumptions of
Theorem \ref{thm-sub} in this case are weaker than that of
\cite[Theorem 4]{hr06}. Indeed, the assumptions of \cite[Theorem 4]{hr06}
require that $T$ is bounded on $L^q(\Omega)$ for some $q\in[1,2]\cap(p,\infty)$ and that
\eqref{sub1} holds true in this case. Therefore, to prove our claim, we only need to show that
the boundedness of $T$ on $L^q(\Omega)$ for some $q\in[1,2]\cap(p,\infty)$ implies
the boundedness of $T$ from $H_q^s(\Omega)$ to $L^q(\Omega)$.
This follows immediately from the well-known fact that the operator $s$ is bounded on
$L^q(\Omega)$ for any $q\in(0,2]$ (see \cite[Theorem 2.11(i)]{W94}).
Similarly, we can also deduce that the assumptions of Theorem \ref{thm-subpq} in this case
are weaker than that of \cite[Theorems 5 and 6]{hr06}.
Thus, Theorems \ref{thm-sub} and \ref{thm-subpq} generalize and
improve \cite[Theorems 4, 5 and 6]{hr06}, respectively.

\item[{\rm(ii)}] Let $\Phi$ be an Orlicz function. Theorems \ref{thm-sub}
and \ref{thm-subpq} when $\V(x,t):=\Phi(t)$ for any $x\in\Omega$ and
$t\in\D$ were proved by Jiao et al. \cite[Theorem 3.1 and Remark 3.2]{jwp15}
under the assumptions that $\Phi\in \mathcal{G}_{\ell}$ for some $\ell\in(0,1]$, $p_{\Phi^{-1}}\in(1,\fz)$ and $q_{\Phi^{-1}}\in\D$ [see \eqref{j} for the
definitions of $p_{\Phi^{-1}}$ and $q_{\Phi^{-1}}$].
However, on the assumption on $\Phi$, Theorems \ref{thm-sub} and \ref{thm-subpq}
in this case only need that $\Phi\in \mathcal{G}_{\ell}$ for some $\ell\in\D$.
Thus, in this sense, Theorems \ref{thm-sub} and \ref{thm-subpq} totally
improve and generalize \cite[Theorem 3.1 and Remark 3.2]{jwp15}, respectively.

\item[{\rm(iii)}] Replacing Assumption \ref{a2-new} by Assumption \ref{a2},
Yang \cite[Theorems 4.2, 4.3 and 4.4]{Y17} also proved Theorems \ref{thm-sub}
and \ref{thm-subpq}. Clearly, Assumption \ref{a2-new} is quite weaker than Assumption \ref{a2}.
Thus, Theorems \ref{thm-sub} and \ref{thm-subpq}
improve \cite[Theorems 4.2, 4.3 and 4.4]{Y17}, respectively.
In particular, Theorem \ref{thm-subSM} is new.

\item[{\rm(iv)}] Let $p\in\D$ and $w$ be a special weight. If,
for any $x\in\Omega$ and $t\in\D$,  $\V(x,t):=w(x)t^p$, then
Theorems \ref{thm-sub}, \ref{thm-subpq} and \ref{thm-subSM} give
the boundedness of sublinear operators from weak weighted martingale
Hardy spaces to weak weighted Lebesgue spaces, which are also new.
\end{enumerate}
\end{remark}

The following weighted martingale inequalities come
from Bonami and L\'epingle \cite[Theorem 1]{bl79}
and Long \cite[Remark 6.6.12, Theorems 6.6.11 and 6.6.12]{Long93}.
\begin{theorem}\label{lem-bl}
Let $w$ be a special weight.
\begin{enumerate}
\item[\rm{(i)}]
If $w\in \mathbb{A}_{\fz}(\Omega)\cap \ss(\Omega)$ and $p\in[1,\fz)$, then
there exists a positive constant $C$ such that, for any $f\in H_{p}^M(\Omega,w\,d\MP)$,
\begin{align}\label{bl1}
\frac{1}{C}\|f\|_{H_{p}^M(\Omega,w\,d\MP)}\le \|f\|_{H_{p}^S(\Omega,w\,d\MP)}
\le C\|f\|_{H_{p}^M(\Omega,w\,d\MP)}.
\end{align}
\item[\rm{(ii)}]
If $w\in \ss^{-}(\Omega)$ and $p\in[2,\fz)$, then
there exists a positive constant $C$ such that, for any $f\in H_{p}^S(\Omega,w\,d\MP)$,
\begin{align}\label{bl2}
\|f\|_{H_{p}^s(\Omega,w\,d\MP)}\le C\|f\|_{H_{p}^S(\Omega,w\,d\MP)}.
\end{align}
\item[\rm{(iii)}]
If $w\in \ss^{+}(\Omega)$ and $p\in(0,2]$, then
there exists a positive constant $C$ such that, for any $f\in H_{p}^s(\Omega,w\,d\MP)$,
\begin{align}\label{bl3}
\|f\|_{H_{p}^S(\Omega,w\,d\MP)}\le C\|f\|_{H_{p}^s(\Omega,w\,d\MP)}.
\end{align}
\item[\rm{(iv)}]
If $w\in \mathbb{A}_{\fz}(\Omega)\cap \ss(\Omega)$ and $p\in(0,2]$, then
there exists a positive constant $C$ such that, for any $f\in H_{p}^s(\Omega,w\,d\MP)$,
\begin{align}\label{bl4}
\|f\|_{H_{p}^M(\Omega,w\,d\MP)}\le C\|f\|_{H_{p}^s(\Omega,w\,d\MP)}.
\end{align}
\end{enumerate}
\end{theorem}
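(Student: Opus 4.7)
The plan is to treat the four parts as a package of weighted martingale inequalities, since each is a weighted analogue of a classical (unweighted) Burkholder-Davis-Gundy or Burkholder square-function inequality, and in each case the weighted version is obtained by combining the unweighted argument with one of the two weight properties $\mathbb{A}_{\infty}(\Omega)$ and $\mathbb{S}^{\pm}(\Omega)$. Because the statement is explicitly attributed to Bonami--L\'epingle and Long, the main job of our proof would be to record the references and indicate how each implication is extracted; any self-contained argument would essentially reproduce those sources, so I would organize the write-up around one technique per weight assumption rather than attempting a new proof.

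For part (i), I would first recall the classical Davis-type equivalence $\|M(f)\|_{L^p}\sim\|S(f)\|_{L^p}$ and then promote it to weights via a good-$\lambda$ inequality. The point is that, under $w\in\mathbb{S}(\Omega)$, the regularity inequality $w_n\sim w_{n-1}$ lets stopping-time arguments pass between $\mathcal{F}_n$- and $\mathcal{F}_{n-1}$-measurable events without losing a constant, and then $w\in\mathbb{A}_{\infty}(\Omega)$ converts the good-$\lambda$ estimate into an $L^p(w\,d\MP)$ estimate by the standard reverse H\"older / distribution-function integration. Part (iv) would be obtained in the same spirit: first use $w\in\mathbb{A}_{\infty}(\Omega)\cap\mathbb{S}(\Omega)$ to control $M(f)$ by $S(f)$ in $L^p(w\,d\MP)$ as in (i), then compare $S(f)$ with $s(f)$ in the range $p\in(0,2]$ via part (iii).

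For parts (ii) and (iii), I would work directly with $s(f)^2=\sum_i\mathbb{E}_{i-1}|d_if|^2$ and $S(f)^2=\sum_i|d_if|^2$. When $p\in[2,\infty)$, the natural inequality is the conditional Jensen/Minkowski bound in $\ell^{p/2}$ together with the left-hand $\mathbb{S}^{-}$ estimate $w_{n-1}\le Kw_n$, which allows one to pass the conditional expectation outside $\int\cdot\,w\,d\MP$ at the cost of a constant, giving (ii). When $p\in(0,2]$, the concavity of $t\mapsto t^{p/2}$ lets the direction of the previous comparison reverse, and the right-hand $\mathbb{S}^{+}$ estimate $w_n\le Kw_{n-1}$ again absorbs the weight; this yields (iii). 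Both arguments are carried out in detail in Long's monograph, so in the actual write-up I would simply cite \cite{bl79,Long93} and sketch the two key computations.

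The step I expect to be the main obstacle is aligning the precise weight hypotheses in (i)--(iv) with the way they are stated in the cited references, because Long formulates his results for scalar weights while here $\V(\cdot,t)$ is a $t$-parametrized family of special weights and $\mathbb{A}_{\infty}(\Omega)$, $\mathbb{S}^{\pm}(\Omega)$ are \emph{uniformly} in $t$ by Definitions \ref{def-ap} and \ref{def-s}. Once one observes that all constants in the scalar proofs depend only on the $\mathbb{A}_{\infty}$, $\mathbb{S}^{\pm}$ and regularity constants, uniformity in $t$ is automatic and each slice $w:=\V(\cdot,t)$ can be treated separately; this is the only nontrivial bookkeeping step needed to bridge the two formulations.
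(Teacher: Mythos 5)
The paper offers no proof of this theorem: it is stated as a known result imported verbatim from Bonami--L\'epingle and Long, so your plan of citing those sources and sketching the mechanisms (good-$\lambda$ plus $\mathbb{A}_{\infty}$ for (i) and (iv); moving the weight across $\mathbb{E}_{i-1}$ via the $\mathbb{S}^{\pm}$ inequalities, together with the convexity or concavity of $t\mapsto t^{p/2}$, for (ii) and (iii); and the observation that all constants depend only on the $\mathbb{A}_{\infty}$ and $\mathbb{S}^{\pm}$ constants, so each slice $w:=\varphi(\cdot,t)$ is handled uniformly in $t$) is essentially the same approach and is consistent with those references. One small caution: if you literally derive (iv) by chaining (i) with (iii), the range $p\in(0,1)$ is not covered by the stated hypothesis $p\in[1,\infty)$ of (i), so you should either invoke the good-$\lambda$ distribution inequality directly (its integration to an $L^p(w\,d\mathbb{P})$ bound works for every $p\in(0,\infty)$) or cite Long's Theorem 6.6.12, which gives $\|M(f)\|_{L^p(\Omega,w\,d\mathbb{P})}\lesssim\|s(f)\|_{L^p(\Omega,w\,d\mathbb{P})}$ for $p\in(0,2]$ without passing through (i).
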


\begin{theorem}\label{thm-mi}
Let $\V$ be a Musielak--Orlicz function satisfying Assumption \ref{a1-new}.
\begin{enumerate}
\item[\rm{(i)}]
If $\V\in \ss^{+}(\Omega)$ and $p_{\V}^{+}\in(0,2)$, then
there exists a positive constant $C$ such that, for any $f\in WH_{\V}^s(\Omega)$,
\begin{align}\label{mi1}
\|f\|_{WH_{\V}^S(\Omega)}\le C\|f\|_{WH_{\V}^s(\Omega)}.
\end{align}

\item[\rm{(ii)}]
If $\V\in \mathbb{A}_{\fz}(\Omega)\cap \ss(\Omega)$ and $p_{\V}^{+}\in(0,2)$, then
there exists a positive constant $C$ such that, for any $f\in WH_{\V}^s(\Omega)$,
\begin{align}\label{mi2}
\|f\|_{WH_{\V}^M(\Omega)}\le C\|f\|_{WH_{\V}^s(\Omega)}.
\end{align}

\item[\rm{(iii)}]
There exists a positive constant $C$ such that, for any $f\in WP_{\V}(\Omega)$
[resp., $f\in WQ_{\V}(\Omega)$],
\begin{align}\label{mi3}
\|f\|_{WH_{\V}^M(\Omega)}\le C\|f\|_{WP_{\V}(\Omega)}
\quad \lf[\mbox{resp., }\|f\|_{WH_{\V}^S(\Omega)}\le C\|f\|_{WQ_{\V}(\Omega)}\r].
\end{align}

\item[\rm{(iv)}]
If $\V\in \mathbb{A}_{\fz}(\Omega)\cap \ss(\Omega)$, then
there exists a positive constant $C$ such that, for any $f\in WP_{\V}(\Omega)$ [resp., $f\in WQ_{\V}(\Omega)$],
\begin{align}\label{mi4}
\|f\|_{WH_{\V}^S(\Omega)}\le C\|f\|_{WP_{\V}(\Omega)}\quad \mbox{and}
\quad \|f\|_{WH_{\V}^s(\Omega)}\le C\|f\|_{WP_{\V}(\Omega)}
\end{align}
$$\lf[\mbox{resp., }\|f\|_{WH_{\V}^M(\Omega)}\le C\|f\|_{WQ_{\V}(\Omega)}\r].$$

\item[\rm{(v)}]
If $\V\in \ss^{-}(\Omega)$, then
there exists a positive constant $C$ such that, for any $f\in WQ_{\V}(\Omega)$,
\begin{align}\label{mi5}
\|f\|_{WH_{\V}^s(\Omega)}\le C\|f\|_{WQ_{\V}(\Omega)}.
\end{align}

\item[\rm{(vi)}]
If $\V\in \mathbb{A}_{\fz}(\Omega)\cap \ss(\Omega)$ and $p_{\V}^{+}\in(0,2)$, then
there exists a positive constant $C$ such that, for any $f\in WQ_{\V}(\Omega)$,
\begin{align}\label{mi6}
\frac1C\|f\|_{WQ_{\V}(\Omega)}\le \|f\|_{WP_{\V}(\Omega)}\le C\|f\|_{WQ_{\V}(\Omega)}.
\end{align}
\end{enumerate}
Moreover, if $\{\cf_n\}_{n\in\zz_{+}}$ is regular and $\V\in \mathbb{A}_{\fz}(\Omega)$, then
$$WH_{\V}^s(\Omega)=WH_{\V}^M(\Omega)=WH_{\V}^S(\Omega)=WP_{\V}(\Omega)=WQ_{\V}(\Omega).$$
\end{theorem}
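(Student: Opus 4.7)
The plan is to derive each of (i)--(vi) by invoking the sublinear-operator boundedness theorems (Theorems \ref{thm-sub}, \ref{thm-subpq} and \ref{thm-subSM}) established earlier in this section, with the operator $T$ chosen from $\{s,S,M\}$, and using the weighted martingale inequalities in Theorem \ref{lem-bl} to verify the relevant instance of Assumption \ref{a2-new}. The support hypothesis \eqref{sub1} (or \eqref{subpq}) is automatic for every atom type: if $a$ is a $(\V,\fz)_X$-atom with stopping time $\nu$, then $a_n=0$ on $\{\nu\ge n\}$ by Definition \ref{def-atom}(i), so $d_n a=0$ there and each of $s(a)$, $S(a)$, $M(a)$ is supported in $B_\nu$, which gives \eqref{sub1} and \eqref{subpq} with constant $1$.

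For (i), I pick $q\in(p_{\V}^{+},2)$ (available since $p_{\V}^{+}<2$) and apply Theorem \ref{thm-sub} with $T:=S$; the $L^q$-boundedness required by Assumption \ref{a2-new}(i) is exactly Theorem \ref{lem-bl}(iii) applied to $w:=\V(\cdot,t)\in\ss^{+}(\Omega)$ for each $t\in\D$. Part (ii) is identical with $T:=M$ and Theorem \ref{lem-bl}(iv). Both halves of (iii) are immediate from Definition \ref{def-spaces}: any $(\lambda_n)\in\Lambda[WP_{\V}](f)$ satisfies $M(f)\le\lambda_\fz$ pointwise, and any $(\lambda_n)\in\Lambda[WQ_{\V}](f)$ satisfies $S(f)\le\lambda_\fz$, so taking the infimum over such sequences yields the claim without any of the sublinear-operator machinery.

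For (iv), I apply Theorem \ref{thm-subpq} in three guises: $T:=S$ and $T:=s$ starting from $WP_{\V}$ (verifying Assumption \ref{a2-new}(iii) via Theorem \ref{lem-bl}(i) for $S$, and via Theorem \ref{lem-bl}(i) combined with \ref{lem-bl}(ii) for $s$ after choosing $q>\max\{p_{\V}^{+},2\}$), together with $T:=M$ starting from $WQ_{\V}$ (using Assumption \ref{a2-new}(ii) with Theorem \ref{lem-bl}(i)). Part (v) follows from the same template with $T:=s$ from $WQ_{\V}$ and Theorem \ref{lem-bl}(ii), again with $q>\max\{p_{\V}^{+},2\}$.

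The main obstacle is part (vi), the norm equivalence $\|f\|_{WP_{\V}(\Omega)}\sim\|f\|_{WQ_{\V}(\Omega)}$, since (iii)--(iv) only compare each side with $WH_{\V}^S(\Omega)$ and $WH_{\V}^M(\Omega)$ rather than with each other. My plan is to use the atomic characterizations $WP_{\V}=WH_{\rm at}^{\V,\fz,M}$ and $WQ_{\V}=WH_{\rm at}^{\V,\fz,S}$ of Theorem \ref{thm-apq} together with the weighted Burkholder-type equivalence $\|M(f)\|_{L^q(w)}\sim\|S(f)\|_{L^q(w)}$ from Theorem \ref{lem-bl}(i) to transfer atoms between the two types at the finite-$q$ level, and then to close the argument by re-running the stopping-time truncation from the proof of Theorem \ref{thm-atomSM} on both sides in order to upgrade the $q$-atomic control back to the $\fz$-atomic control required for $WP_{\V}$ and $WQ_{\V}$; the upper-type restriction $p_{\V}^{+}<2$ enters precisely to make the resulting geometric series summable and to absorb the transfer loss. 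The ``Moreover'' statement then follows because, under regularity of $\{\cf_n\}_{n\in\zz_{+}}$, every $\V\in\mathbb{A}_\fz(\Omega)$ automatically lies in $\ss(\Omega)$ and a standard Davis-type decomposition removes the restriction $p_{\V}^{+}<2$, so chaining the inequalities (i)--(vi) collapses the five weak martingale Musielak--Orlicz Hardy spaces into one.
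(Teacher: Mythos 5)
Your treatment of parts (i)--(v) is correct and coincides with the paper's: the same choices of $T\in\{s,S,M\}$, the same verification that $s(a)$, $S(a)$, $M(a)$ are supported in $B_{\nu}$, the same appeals to Theorem \ref{lem-bl} to supply Assumption \ref{a2-new}, and the same observation that (iii) is immediate from Definition \ref{def-spaces}.

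The genuine gap is in part (vi). Your plan --- transfer between $(\V,\fz)_M$- and $(\V,\fz)_S$-atomic decompositions at a finite-$q$ level via the weighted Burkholder equivalence and then ``upgrade the $q$-atomic control back to the $\fz$-atomic control'' by re-running the stopping-time truncation of Theorem \ref{thm-atomSM} --- does not go through as described, for two reasons. First, there is no mechanism in the paper (nor a standard one) for converting a $q$-atomic decomposition back into an $\fz$-atomic one; the $\fz$-atoms in Theorems \ref{thm-apq} and \ref{thm-atomSM} are produced directly by specific stopping-time constructions, not by upgrading weaker atoms. Second, the truncation you want to re-run rests on Lemma \ref{st1}, which requires the stochastic basis to be regular, and regularity is \emph{not} assumed in (vi). The paper's actual proof of (vi) is elementary and bypasses atoms entirely: given $(\lambda_n)_{n\in\zz_{+}}\in\Lambda[WQ_{\varphi}](f)$ with $\|\lambda_{\fz}\|_{\WP}\le\|f\|_{WQ_{\V}(\Omega)}+\varepsilon$, one has $|f_n|\le M_{n-1}(f)+|d_nf|\le M_{n-1}(f)+S_n(f)\le M_{n-1}(f)+\lambda_{n-1}$, so $(M_n(f)+\lambda_n)_{n\in\zz_{+}}$ is admissible for $WP_{\V}(\Omega)$ and \eqref{mi4} bounds $\|M(f)\|_{\WP}$ by $\|f\|_{WQ_{\V}(\Omega)}$; the reverse direction uses $S_n(f)\le S_{n-1}(f)+2\lambda_{n-1}$ symmetrically. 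You should replace your transfer scheme by this direct argument.

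The ``Moreover'' part also has a gap: you invoke ``a standard Davis-type decomposition'' to remove the restriction $p_{\V}^{+}<2$ before chaining (i)--(vi), but no such decomposition is established in this paper for the weak Musielak--Orlicz setting, and without it the chain does not close, since (i), (ii) and (vi) genuinely require $p_{\V}^{+}<2$. The paper instead avoids those parts altogether: under regularity, $\V\in\mathbb{A}_{\fz}(\Omega)$ forces $\V\in\ss$, so Theorems \ref{thm-apq} and \ref{thm-atomSM} (which hold for every $p_{\V}^{+}\in\D$) give $WQ_{\V}(\Omega)=WH_{\V}^S(\Omega)$ and $WP_{\V}(\Omega)=WH_{\V}^M(\Omega)$; combining with \eqref{mi4} yields $WQ_{\V}(\Omega)=WH_{\V}^S(\Omega)=WH_{\V}^M(\Omega)=WP_{\V}(\Omega)\subseteq WH_{\V}^s(\Omega)$, and the circle is closed by the pointwise estimate $S(f)\lesssim s(f)$, which follows from regularity via $|d_nf|^2\lesssim\ee_{n-1}(|d_nf|^2)$.
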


\begin{proof}
In order to prove \eqref{mi1} and \eqref{mi2}, we use Theorem \ref{thm-sub}
with the operator $T:=S$ or $M$. From Definition \ref{def-atom}(i), it follows that,
for any $(\varphi,q)_s$-atom $a$,
$$0\le\mathbf{1}_{\{x\in\Omega:\ \nu(x)=\fz\}}\lf[S(a)\r]^2
=\mathbf{1}_{\{x\in\Omega:\ \nu(x)=\fz\}}
\sum_{n\in\nn}\lf|d_na\r|^2
\le \sum_{n\in\nn}\mathbf{1}_{\{x\in\Omega:\ \nu(x)\geq n\}}\lf|d_na\r|^2=0,$$
which implies that $\{x\in\Omega:\ S(f)(x)>0\}\subseteq B_{\nu}$ and hence
the operator $S$ satisfies \eqref{sub1}.
Clearly, the Doob maximal operator $M$ also satisfies \eqref{sub1}.
By this, \eqref{bl3}, \eqref{bl4} and Theorem \ref{thm-sub},
we obtain \eqref{mi1} and \eqref{mi2}.

Inequalities \eqref{mi3} follow immediately from the definitions of $WP_{\V}(\Omega)$
and $WQ_{\V}(\Omega)$.

To prove inequalities \eqref{mi4} and \eqref{mi5}, we apply Theorem \ref{thm-subpq},
respectively, to the operator $T=S$, $M$ or $s$.
Observe that operators $M$, $S$ and $s$ all satisfy the condition \eqref{subpq}.
From \eqref{bl1} and \eqref{bl2}, it follows that, for any $q\in[2,\fz)$ and $t\in\D$,
$$s:\ H_q^M(\Omega,\V(\cdot,t)\,d\MP)\to L^q(\Omega,\V(\cdot,t))$$
is bounded. Combining this, \eqref{bl1}, \eqref{bl2} and Theorem \ref{thm-subpq},
we obtain \eqref{mi4} and \eqref{mi5}.

To show inequalities \eqref{mi6}, let $f\in WQ_{\V}(\Omega)$. For any $\varepsilon\in\D$,
there exists an adapted process
$\{\lambda_n^{(1)}\}_{n\in\zz_{+}}\in\Lambda[WQ_{\varphi}](f)$
such that, for any $n\in\nn$,
$$S_n(f)\le \lambda_{n-1}^{(1)}\quad{\rm and}\quad
\lf\|\lambda_{\fz}^{(1)}\r\|_{\WP}\le \|f\|_{WQ_{\V}(\Omega)}+\varepsilon.$$
By this, we find that, for any $n\in\nn$,
$$|f_n|\le M_{n-1}(f)+\lf|d_nf\r|\le M_{n-1}(f)+S_n(f)\le M_{n-1}(f)+\lambda_{n-1}^{(1)}.$$
Combining this and \eqref{mi4}, we know that
$$\|f\|_{WP_{\V}(\Omega)}\lesssim \|f\|_{WH_{\V}^M(\Omega)}+\lf\|\lambda_{\fz}^{(1)}\r\|_{\WP}
\lesssim \|f\|_{WQ_{\V}(\Omega)}+\varepsilon,$$
which, together with letting $\varepsilon\to0$, implies that $\|f\|_{WP_{\V}(\Omega)}\lesssim \|f\|_{WQ_{\V}(\Omega)}$
and $f\in WP_{\V}(\Omega)$. Moreover, for any $\varepsilon\in\D$,
there exists an adapted process
$\{\lambda_n^{(2)}\}_{n\in\zz_{+}}\in\Lambda[WP_{\varphi}](f)$ such that,
for any $n\in\nn$,
$$|f_n|\le \lambda_{n-1}^{(2)}\quad{\rm and}\quad
\lf\|\lambda_{\fz}^{(2)}\r\|_{\WP}\lesssim \|f\|_{WP_{\V}(\Omega)}+\varepsilon,$$
which implies that, for any $n\in\nn$,
$$S_n(f)\le S_{n-1}(f)+\lf|d_nf\r|\le S_{n-1}(f)+2\lambda_{n-1}^{(2)}.$$
From this and \eqref{mi4}, it follows that
$$\|f\|_{WQ_{\V}(\Omega)}\lesssim \|f\|_{WH_{\V}^S(\Omega)}+\lf\|\lambda_{\fz}^{(1)}\r\|_{\WP}
\lesssim \|f\|_{WP_{\V}(\Omega)}+\varepsilon$$
and hence, by letting $\varepsilon\to0$, $\|f\|_{WQ_{\V}(\Omega)}\lesssim \|f\|_{WP_{\V}(\Omega)}$.
Thus, we conclude that inequalities \eqref{mi6} hold true.

Finally, assume that $\{\mathcal{F}_n\}_{n\in\mathbb{Z}_{+}}$ is regular.
From this and  $\V\in \mathbb{A}_{\fz}(\Omega)$, it follows that
$\V\in\ss$ (see \cite[Proposition 6.3.7]{Long93}).
Then, by Theorems \ref{thm-apq} and \ref{thm-atomSM}, we have
$$WQ_{\V}(\Omega)=WH_{\V}^S(\Omega) \quad {\rm and}
\quad WP_{\V}(\Omega)=WH_{\V}^M(\Omega).$$
Combining this and \eqref{mi4}, we know that
$$WQ_{\V}(\Omega)=WH_{\V}^S(\Omega)=WH_{\V}^M(\Omega)
= WP_{\V}(\Omega)\subseteq WH_{\V}^s(\Omega).$$
Thus, to complete the proof of this theorem, we only need to show that
$WH_{\V}^s(\Omega)\subseteq WH_{\V}^S(\Omega)$.
By the regularity and \cite[Lemma 2.18]{W94},
we have $|d_nf|^2\lesssim \ee_{n-1}(|d_nf|^2)$ for any $n\in\nn$.
From this, it follows that $S(f)\lesssim s(f)$ and hence
$\|f\|_{WH_{\V}^S(\Omega)}\lesssim \|f\|_{WH_{\V}^s(\Omega)}$.
Thus, $WH_{\V}^s(\Omega)\subseteq WH_{\V}^S(\Omega)$,
which completes the proof of Theorem \ref{thm-mi}.
\end{proof}

\begin{remark}\label{rmk=mi}
\begin{enumerate}
\item[{\rm(i)}]
For any given $p\in\D$, if $\V(x,t):=t^p$ for any $x\in\Omega$
and $t\in\D$, then Theorem \ref{thm-mi} in this case coincides with \cite[Theorem 7]{hr06}.

\item[{\rm(ii)}] Let $\Phi$ be an Orlicz function. Theorem \ref{thm-mi}
when $\V(x,t):=\Phi(t)$ for any $x\in\Omega$ and $t\in\D$ was proved
by Jiao et al. \cite[Theorem 3.3]{jwp15} under some slightly stronger assumptions.
Indeed, \cite[Theorem 3.3]{jwp15} needs the condition that $\Phi$ is of lower type $p_{\Phi}^{-}$ for some $p_{\Phi}^{-}\in(0,1]$ and of upper type $p_{\Phi}^{+}:=1$
and $q_{\Phi^{-1}}\in\D$. However, the conclusions \eqref{mi1}, \eqref{mi2} and \eqref{mi6}
of Theorem \ref{thm-mi} only need $p_{\Phi}^{+}\in(0,2)$.
Thus, Theorem \ref{thm-mi} generalizes and improves \cite[Theorem 3.3]{jwp15}.

\item[{\rm(iii)}] Under Assumption \ref{a2}, Yang \cite[Therem 4.5]{Y17} also proved
the martingale inequalities among spaces $WH_{\varphi}^M(\Omega)$, $WH_{\varphi}^S(\Omega)$,
$WH_{\varphi}^s(\Omega)$, $WP_{\varphi}(\Omega)$ and $WQ_{\varphi}(\Omega)$.
By Assumption \ref{a2} and Remark \ref{rem-p}, we know that
Theorem \ref{thm-mi} improves \cite[Therem 4.5]{Y17}.

\item[{\rm(iv)}] Similarly to the discussion of Remark \ref{rmk-su}(iv),
Theorem \ref{thm-mi} is also new on weak weighted martingale (Orlicz) Hardy spaces.
\end{enumerate}
\end{remark}

\section{Convergence theorems\label{s5}}

In this section, we obtain bounded convergence theorems and
dominated convergence theorems on weak Musielak--Orlicz spaces $\WP$.
We begin with the following notion.

\begin{definition}
Let $\V$ be a Musielak--Orlicz function. The space $\WP$ is said to have
\emph{absolutely continuous quasi-norm} if, for any measurable function $f\in\WP$,
$$\lim_{n\to\fz}\|f\mathbf{1}_{\{x\in\Omega:\ |f(x)|>n\}}\|_{\WP}=0.$$
\end{definition}

But, not every weak Musielak--Orlicz spaces $\WP$ has
absolutely continuous quasi-norm even when $\V$ satisfies Assumption \ref{a1-new}.
For example, let $\Omega:=(0,1]$ and $\MP$ be the Lebesgue measure. For any
$x\in(0,1]$ and $t\in\D$, let $\V(x,t):=t^p$ with $p\in\D$ and
$f(x):=x^{-\frac1p}$ (see, for example, \cite[Example 2.5]{ly}).
Via a simple calculation, we know that $\|f\|_{\WP}=1$ and
$\V$ is of uniformly lower type $p$ and of uniformly upper type $p$.
However, for any $n\in\nn$, we have
$\|f\mathbf{1}_{\{x\in\Omega:\ |f(x)|>n\}}\|_{\WP}=1$.
Thus, $\WP$ for this $\V$ has no absolutely continuous quasi-norm.

\begin{definition}
Let $\V$ be a Musielak--Orlicz function.
The \emph{new Musielak--Orlicz space $W\mathcal{L}_{\V}(\Omega)$} is
defined as follows:
$$W\mathcal{L}_{\V}(\Omega):=
\lf\{f\in\WP:\ \lim_{n\to\fz}\|f\mathbf{1}_{\{x\in\Omega:\ |f(x)|>n\}}\|_{\WP}=0\r\}.$$
\end{definition}

\begin{lemma}\label{comp}
Let $\V$ be a Musielak--Orlicz function with uniformly
upper type $p_{\V}^{+}$ for some $p_{\V}^{+}\in\D$.
\begin{enumerate}
\item[{\rm (i)}] For any measurable functions $g\in\WP$ and $h\in\cwp$,
if $|g|$ is pointwise $\MP$-almost everywhere bounded by $|h|$, then $g\in\cwp$.
\item[{\rm (ii)}] If $g$, $h\in\cwp$, then, for any $c_1$, $c_2\in\cc$, $c_1g+c_2h\in\cwp$.
\item[{\rm (iii)}] If $\{g_n\}_{n\in\nn}\subset \cwp$ and
there exists a measurable function
$g$ such that $\lim_{n\to\fz}\|g_n-g\|_{\WP}=0$, then $g\in\cwp$.
\end{enumerate}
\end{lemma}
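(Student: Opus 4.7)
My plan is to deduce all three parts from two basic features of $\|\cdot\|_{\WP}$ beyond the quasi-triangle inequality of Remark \ref{tri}. The first is monotonicity: if $|u|\le|v|$ $\MP$-a.e., then $\{x\in\Omega:\ |u(x)|>\alpha\}\subseteq\{x\in\Omega:\ |v(x)|>\alpha\}$ for every $\alpha\in\D$, whence $\V(\{x\in\Omega:\ |u(x)|>\alpha\},\alpha/\lambda)\le\V(\{x\in\Omega:\ |v(x)|>\alpha\},\alpha/\lambda)$ and $\|u\|_{\WP}\le\|v\|_{\WP}$. The second is a ``continuity of measure'' fact: $\|\mathbf{1}_{E_n}\|_{\WP}\to0$ whenever $\{E_n\}_{n\in\nn}$ is a decreasing family of measurable sets shrinking to a $\MP$-null set. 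I would establish this by noting that $\{x\in\Omega:\ |\mathbf{1}_E(x)|>\alpha\}=E$ for $\alpha\in(0,1)$ and is empty for $\alpha\ge1$, so that $\|\mathbf{1}_E\|_{\WP}\le\inf\{\lambda\in\D:\ \V(E,1/\lambda)\le1\}$; an appeal to the dominated convergence theorem applied to $\V(\cdot,1/\lambda)\mathbf{1}_{E_n}$, which decreases to $0$ $\MP$-a.e.\ and is dominated by the integrable $\V(\cdot,1/\lambda)$, then gives $\V(E_n,1/\lambda)\to0$, and the claim follows.

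Part (i) is then immediate: $|g|\le|h|$ $\MP$-a.e.\ gives $\{x\in\Omega:\ |g(x)|>n\}\subseteq\{x\in\Omega:\ |h(x)|>n\}$, hence $|g|\mathbf{1}_{\{|g|>n\}}\le|h|\mathbf{1}_{\{|h|>n\}}$, and monotonicity yields $\|g\mathbf{1}_{\{|g|>n\}}\|_{\WP}\le\|h\mathbf{1}_{\{|h|>n\}}\|_{\WP}\to0$. For part (ii), I would first note that $c_1g+c_2h\in\WP$ by Remark \ref{tri}, and then exploit the inclusion $\{|c_1g+c_2h|>2n\}\subseteq\{|c_1||g|>n\}\cup\{|c_2||h|>n\}$ together with Remark \ref{tri} to split $\|(c_1g+c_2h)\mathbf{1}_{\{|c_1g+c_2h|>2n\}}\|_{\WP}$ into four summands. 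The two diagonal summands tend to $0$ as $n\to\fz$ directly from $g,h\in\cwp$; the two cross summands, typified by $\|g\mathbf{1}_{\{|c_2||h|>n\}}\|_{\WP}$, I would attack by splitting $g=g\mathbf{1}_{\{|g|\le M\}}+g\mathbf{1}_{\{|g|>M\}}$ to get $\|g\mathbf{1}_{\{|c_2||h|>n\}}\|_{\WP}\ls\|g\mathbf{1}_{\{|g|>M\}}\|_{\WP}+M\|\mathbf{1}_{\{|c_2||h|>n\}}\|_{\WP}$. Given $\varepsilon\in\D$, I would choose $M$ large using $g\in\cwp$ to kill the first term, and then $n$ large using the auxiliary fact to kill the second (its hypotheses hold because $h$ is $\MP$-a.e.\ finite, so $\{|c_2||h|>n\}$ shrinks to a null set).

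For part (iii), first note that $g\in\WP$ follows from Remark \ref{tri} applied to $g=(g-g_k)+g_k$, and that $g$ is $\MP$-a.e.\ finite since $\V(x,\cdot)\to\fz$ as $t\to\fz$ would otherwise force $\|g\|_{\WP}=\fz$. Given $\varepsilon\in\D$, I would pick $k$ with $\|g-g_k\|_{\WP}<\varepsilon$; the decomposition $g\mathbf{1}_{\{|g|>n\}}=(g-g_k)\mathbf{1}_{\{|g|>n\}}+g_k\mathbf{1}_{\{|g|>n\}}$ together with Remark \ref{tri} and monotonicity gives $\|g\mathbf{1}_{\{|g|>n\}}\|_{\WP}\ls\varepsilon+\|g_k\mathbf{1}_{\{|g|>n\}}\|_{\WP}$, after which the truncation step from part (ii), now applied to $g_k$ with $E_n=\{|g|>n\}$, produces the further bound $\ls\|g_k\mathbf{1}_{\{|g_k|>M\}}\|_{\WP}+M\|\mathbf{1}_{\{|g|>n\}}\|_{\WP}$; a three-$\varepsilon$ choice (first $k$, then $M$, then $n$) closes the argument. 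The principal technical obstacle is the cross-term estimate in (ii), where a single application of the quasi-triangle inequality is insufficient and the truncation device I outline is the key ingredient; it then transfers essentially unchanged into (iii).
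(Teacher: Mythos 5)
Your argument is correct, but for part (iii) -- the only part the paper actually proves in detail -- it takes a genuinely different route. The paper also fixes $n_0$ with $\|g_{n_0}-g\|_{\WP}<\varepsilon$ and uses the inclusion $\{|g|>k\}\subseteq\{|g_{n_0}|>k/2\}\cup\{|g_{n_0}-g|>k/2\}$, so the whole issue is the cross term $\|g_{n_0}\mathbf{1}_{\{x\in\Omega:\ |g_{n_0}(x)-g(x)|>k/2\}}\|_{\WP}$; the paper controls it entirely at the modular level, splitting $\sup_{\alpha\in\D}$ at the threshold $k_0$ coming from $g_{n_0}\in\cwp$ and bounding the result by the maximum of two unit modular estimates (one from $\|g_{n_0}\mathbf{1}_{\{|g_{n_0}|>k_0\}}\|_{\WP}<\varepsilon$, one from $\|g_{n_0}-g\|_{\WP}<\varepsilon$), with no truncation and no limiting argument on sets. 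You instead keep the set $\{|g|>n\}$ intact, truncate $g_k$ at height $M$, and invoke an auxiliary continuity-of-measure fact $\|\mathbf{1}_{E_n}\|_{\WP}\to0$ for $E_n$ decreasing to a null set; this is more elementary and transparent, but it costs you two extra ingredients which you do correctly supply: the $\MP$-a.e.\ finiteness of $g$ (forced by $\|g\|_{\WP}<\fz$ and $\lim_{t\to\fz}\V(x,t)=\fz$) and the finiteness of the measures $\V(\cdot,t)\,d\MP$, which is the paper's standing ``special weight'' assumption $\sup_{t\in\D}\int_{\Omega}\V(x,t)\,d\MP<\fz$ (see Remark \ref{Me}) rather than part of the lemma's stated hypotheses, so it is worth flagging that dependence. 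You also prove part (ii), which the paper dismisses as clear; your truncation there is valid, though the cross terms you labor over can be avoided altogether via the pointwise bound $(u+v)\mathbf{1}_{\{u+v>2n\}}\le 2u\mathbf{1}_{\{u>n\}}+2v\mathbf{1}_{\{v>n\}}$ for nonnegative $u,v$, combined with part (i), so the truncation device is not really ``the key ingredient'' for (ii) -- only your version of (iii) genuinely needs it.
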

\begin{proof}
It is clear that (i) and (ii) hold true. Now we prove (iii).
For any fixed $\varepsilon\in\D$, by the condition that
$\lim_{n\to\fz}\|g_n-g\|_{\WP}=0$, we know that there exists a positive
integer $N_0$ such that, for any $n\in \nn\cap(N_0,\fz)$,
\begin{align}\label{c3}
\lf\|g_n-g\r\|_{\WP}<\varepsilon.
\end{align}
Moreover, for any fixed $n_0\in \nn\cap(N_0,\fz)$,
since $g_{n_0}\in\cwp$, we find that
there exists a positive integer $k_0$ such that
\begin{align}\label{c1}
\lf\|g_{n_0}\mathbf{1}_{\{x\in\Omega:\ |g_{n_0}(x)|>k_0\}}\r\|_{\WP}<\varepsilon.
\end{align}
Combining this and the definition of $\WP$, we conclude that
$$\sup_{\alpha\in\D}\int_{\{x\in\Omega:\ |g_{n_0}(x)|>\alpha\}
\cap \{x\in\Omega:\ |g_{n_0}(x)|>k_0\}}\V\lf(x,\frac{\alpha}{\varepsilon}\r)\,d\MP\le1.$$
From this, it follows that
\begin{align}\label{c2}
\sup_{\alpha\in(k_0,\fz)}\int_{\{x\in\Omega:\ |g_{n_0}(x)|>\alpha\}
}\V\lf(x,\frac{\alpha}{\varepsilon}\r)\,d\MP\le1.
\end{align}

On another hand, since $n_0\in \nn\cap(N_0,\fz)$, from \eqref{c3}, it follows that
\begin{align*}
\int_{\{x\in\Omega:\ |g_{n_0}(x)-g(x)|>k_0\}}\V\lf(x,\frac{k_0}{\varepsilon}\r)\,d\MP
\le \sup_{\alpha\in\D}\int_{\{x\in\Omega:\ |g_{n_0}(x)-g(x)|>\alpha\}}
\V\lf(x,\frac{\alpha}{\varepsilon}\r)\,d\MP\le1,
\end{align*}
which, together with \eqref{c2}, implies that, for any $k\in\nn\cap(2k_0,\fz)$,
\begin{align*}
&\sup_{\alpha\in\D}\int_{\{x\in\Omega:\ |g_{n_0}(x)|>\alpha\}
\cap \{x\in\Omega:\ |g_{n_0}(x)-g(x)|>k/2\}}\V\lf(x,\frac{\alpha}{\varepsilon}\r)\,d\MP\\
&\hs\le\max\lf\{\sup_{\alpha\in(0,k_0]}\int_{\{x\in\Omega:\ |g_{n_0}(x)-g(x)|>k/2\}}\V\lf(x,\frac{\alpha}{\varepsilon}\r)\,d\MP,
\sup_{\alpha\in(k_0,\fz)}\int_{\{x\in\Omega:\ |g_{n_0}(x)|>\alpha\}}
\V\lf(x,\frac{\alpha}{\varepsilon}\r)\,d\MP\r\}\\
&\hs\le\max\lf\{\int_{\{x\in\Omega:\ |g_{n_0}(x)-g(x)|>k_0\}}
\V\lf(x,\frac{k_0}{\varepsilon}\r)\,d\MP,1\r\}\le1.
\end{align*}
By this and the definition of $\WP$, we find that, for any $k\in\nn\cap(2k_0,\fz)$,
$$\lf\|g_{n_0}\mathbf{1}_{\{x\in\Omega:\ |g_{n_0}(x)-g(x)|>k/2\}}\r\|_{\WP}<\varepsilon.$$
Combining this, Remark \ref{tri}, \eqref{c3} and \eqref{c1},
we conclude that, for any $k\in\nn\cap(2k_0,\fz)$,
\begin{align*}
\lf\|g\mathbf{1}_{\{x\in\Omega:\ |g(x)|>k\}}\r\|_{\WP}
&\lesssim \lf\|g_{n_0}-g\r\|_{\WP}
+\lf\|g_{n_0}\mathbf{1}_{\{x\in\Omega:\ |g_{n_0}(x)|>k/2\}}\r\|_{\WP}\\
&\hs+\lf\|g_{n_0}\mathbf{1}_{\{x\in\Omega:\ |g_{n_0}(x)-g(x)|>k/2\}}\r\|_{\WP}
\lesssim \varepsilon.
\end{align*}
Thus, we have $\lim_{k\to\fz}\|g\mathbf{1}_{\{x\in\Omega:\ |g(x)|>k\}}\|_{\WP}=0,$
which completes the proof of (iii) and hence of Lemma \ref{comp}.
\end{proof}

\begin{remark}\label{close}
Let $\V$ be a Musielak--Orlicz function with uniformly
upper type $p_{\V}^{+}$ for some $p_{\V}^{+}\in\D$. From Lemma \ref{comp},
we deduce that $\cwp$ is a closed subspace of $\WP$.
\end{remark}

The following lemma is just \cite[Lemma 3.3(ii)]{lyj16}.

\begin{lemma}\label{l1}
Let $\V$ be a Musielak--Orlicz function satisfying Assumption \ref{a1-new}.
Then, for any $f\in\WP$ satisfying $\|f\|_{\WP}\neq0,$
$$\sup_{\alpha\in\D}\V\lf(\{x\in\Omega:\ |f(x)|>\alpha\},\frac{\alpha}{\|f\|_{\WP}}\r)=1.$$
\end{lemma}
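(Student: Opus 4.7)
The plan is to analyze the function
$$F(\lambda):=\sup_{\alpha\in\D}\V\lf(\{x\in\Omega:\ |f(x)|>\alpha\},\frac{\alpha}{\lambda}\r),\quad \lambda\in\D,$$
and show that $F(\|f\|_{\WP})=1$ by establishing the two inequalities separately. First I would note that $F$ is non-increasing in $\lambda$, since for each fixed $\alpha\in\D$ and $x\in\Omega$, $\V(x,\cdot)$ is non-decreasing, so $\lambda\mapsto\V(E_\alpha,\alpha/\lambda)$ is non-increasing, and the supremum of non-increasing functions is non-increasing.

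For the upper bound $F(\|f\|_{\WP})\le1$, I would take any sequence $\lambda_n\downarrow\|f\|_{\WP}$ with $\lambda_n>\|f\|_{\WP}$, so that $F(\lambda_n)\le1$ by definition of the infimum. For each fixed $\alpha\in\D$, the non-decreasing property of $\V(x,\cdot)$ together with the monotone convergence theorem then yields
$$\V\lf(E_\alpha,\frac{\alpha}{\|f\|_{\WP}}\r)=\lim_{n\to\fz}\V\lf(E_\alpha,\frac{\alpha}{\lambda_n}\r)\le\liminf_{n\to\fz}F(\lambda_n)\le1,$$
after which taking the supremum over $\alpha$ gives $F(\|f\|_{\WP})\le1$.

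For the lower bound $F(\|f\|_{\WP})\ge1$, I would argue by contradiction. Suppose $F(\|f\|_{\WP})=\eta<1$. The idea is to produce a $\lambda<\|f\|_{\WP}$ with $F(\lambda)\le1$, violating the definition of the infimum. Setting $s:=\|f\|_{\WP}/\lambda>1$ and using that $\V$ is of uniformly upper type $p_\V^{+}$ from Assumption \ref{a1-new} (see \eqref{i1}), one obtains
$$F(\lambda)=\sup_{\alpha\in\D}\V\lf(E_\alpha,s\cdot\frac{\alpha}{\|f\|_{\WP}}\r)\le C_{(p_\V^{+})}s^{p_\V^{+}}F(\|f\|_{\WP})=C_{(p_\V^{+})}s^{p_\V^{+}}\eta,$$
so that letting $s\to1^{+}$ forces $F(\lambda)\to C_{(p_\V^{+})}\eta$ in the estimate; when this limit is $<1$, any $s$ slightly above $1$ produces the desired contradiction.

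The main obstacle is exactly that the upper-type constant $C_{(p_\V^{+})}$ need not equal $1$, so the naive bound above only delivers a contradiction when $\eta<1/C_{(p_\V^{+})}$, leaving a gap when $\eta\in[1/C_{(p_\V^{+})},1)$. To close this gap I would show that $F$ is in fact left-continuous at $\|f\|_{\WP}$ by a dominated convergence argument: for each $\alpha$, $\lambda\mapsto\V(E_\alpha,\alpha/\lambda)$ is continuous (since $\V(x,\cdot)$ is continuous for $\MP$-a.e.\ $x$ under Assumption \ref{a1-new}, being sandwiched between its lower and upper type power bounds), and the uniformly upper type $p_\V^{+}$ estimate provides an integrable majorant on a neighborhood of $\|f\|_{\WP}$, giving continuity of $\lambda\mapsto\V(E_\alpha,\alpha/\lambda)$ uniformly enough that the supremum $F$ is itself left-continuous at $\|f\|_{\WP}$. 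Combining this left-continuity with the assumption $F(\|f\|_{\WP})<1$ then immediately yields some $\lambda<\|f\|_{\WP}$ with $F(\lambda)<1$, contradicting $\|f\|_{\WP}=\inf\{\lambda:F(\lambda)\le1\}$ and completing the proof.
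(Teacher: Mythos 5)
The paper does not actually prove this lemma; it simply cites \cite[Lemma 3.3(ii)]{lyj16}, so your argument has to stand on its own. Your overall strategy --- setting $F(\lambda):=\sup_{\alpha\in\D}\V(E_\alpha,\alpha/\lambda)$ with $E_\alpha:=\{x\in\Omega:\ |f(x)|>\alpha\}$, using monotonicity of $F$, and proving the two inequalities at $\lambda=\|f\|_{\WP}$ separately --- is the natural one, and you deserve credit for isolating the real difficulty: the uniformly upper type inequality \eqref{i1} only gives $F(\lambda)\le C_{(p_{\V}^{+})}(\|f\|_{\WP}/\lambda)^{p_{\V}^{+}}F(\|f\|_{\WP})$, which yields a contradiction only when $F(\|f\|_{\WP})<1/C_{(p_{\V}^{+})}$.

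The way you close that gap, however, does not work. First, your claim that Assumption \ref{a1-new} forces $\V(x,\cdot)$ to be continuous (``being sandwiched between its lower and upper type power bounds'') is false: the type conditions only control the ratio $\V(x,st)/\V(x,t)$ up to multiplicative constants and are fully compatible with jumps. For example, $\phi(t):=\eta t$ on $[0,1)$ and $\phi(t):=2t$ on $[1,\fz)$, with $\eta\in(0,1)$, is of uniformly lower and upper type $1$; taking $\Omega:=(0,1]$ with Lebesgue measure, $\V(x,t):=\phi(t)$ and $f\equiv 1$, one checks $F(\lambda)=\sup_{0<\alpha<1}\phi(\alpha/\lambda)$ equals $\eta/\lambda$ for $\lambda\ge1$ and is at least $\phi(1)=2$ for $\lambda<1$, so $\|f\|_{\WP}=1$ while $F(1)=\eta<1$. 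Thus without continuity of $\V(x,\cdot)$ the lemma is simply false, so continuity must enter as a hypothesis (the implicit convention behind \cite{lyj16}); it cannot be derived from Assumption \ref{a1-new}. Second, even granting continuity of $\V(x,\cdot)$, your deduction that $F$ is left-continuous at $\|f\|_{\WP}$ is not justified: a supremum of continuous, non-increasing functions of $\lambda$ need not be left-continuous, because the near-maximizing $\alpha$ may escape as $\lambda\uparrow\|f\|_{\WP}$; the phrase ``uniformly enough'' is exactly where a genuine, uniform-in-$\alpha$ estimate is missing, and the upper type bound alone only delivers the factor $C_{(p_{\V}^{+})}$ you already know is insufficient. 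Finally, note that the same continuity issue already infects your upper bound: the monotone convergence theorem gives $\lim_{n\to\fz}\V(E_\alpha,\alpha/\lambda_n)=\int_{E_\alpha}\lim_{n\to\fz}\V(x,\alpha/\lambda_n)\,d\MP$, which is the integral of the \emph{left limit} of $\V(x,\cdot)$ at $\alpha/\|f\|_{\WP}$ and may be strictly smaller than $\V(E_\alpha,\alpha/\|f\|_{\WP})$; so the displayed equality in that step is also an unproved (and, in the example above, invalid) assertion.
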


For any measurable function $f$, let
$\rho_{\V}(f):=\sup_{\alpha\in\D}\V\lf(\{x\in\Omega:\ |f(x)|>\alpha\},\alpha\r).$

\begin{lemma}\label{lem-c}
Let $\V$ be a Musielak--Orlicz function satisfying Assumption \ref{a1-new}.
Then, for any measurable functions $\{h_n\}_{n\in\nn}$,
$\lim_{n\to\fz}\|h_n\|_{\WP}=0$ if and only if
$\lim_{n\to\fz}\rho_{\V}(h_n)=0.$
\end{lemma}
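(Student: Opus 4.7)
The plan is to exploit the scaling behavior that uniformly lower and upper type provide to compare the Luxemburg-type quantity defining $\|\cdot\|_{\WP}$ with the modular $\rho_{\V}$. Introduce the shorthand $F_n(\lambda):=\sup_{\alpha\in\D}\V(\{x\in\Omega:\ |h_n(x)|>\alpha\},\alpha/\lambda)$, so that $\|h_n\|_{\WP}=\inf\{\lambda>0:\ F_n(\lambda)\le 1\}$ and $\rho_{\V}(h_n)=F_n(1)$. Note that $F_n(\lambda)$ is non-increasing in $\lambda$, since $\V(x,\cdot)$ is non-decreasing.

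For the direction $\|h_n\|_{\WP}\to 0\Rightarrow\rho_{\V}(h_n)\to 0$, I would fix an arbitrary $\varepsilon\in(0,1)$ and choose $\lambda\in(0,1)$ so small that $C_{(p_{\V}^{-})}\lambda^{p_{\V}^{-}}<\varepsilon$, where $C_{(p_{\V}^{-})}$ is the uniformly lower type constant in \eqref{i1}. Since $\|h_n\|_{\WP}\to 0$, eventually $\|h_n\|_{\WP}<\lambda$, which by the monotonicity of $F_n$ forces $F_n(\lambda)\le 1$. Applying the uniformly lower type inequality with $s=\lambda$ and $t=\alpha/\lambda$ yields, for every $\alpha\in\D$ and every $x\in\Omega$,
\begin{align*}
\V(x,\alpha)\le C_{(p_{\V}^{-})}\lambda^{p_{\V}^{-}}\V(x,\alpha/\lambda).
\end{align*}
Taking the sup over $\alpha$ on the level sets $\{|h_n|>\alpha\}$ gives $\rho_{\V}(h_n)\le C_{(p_{\V}^{-})}\lambda^{p_{\V}^{-}}F_n(\lambda)\le C_{(p_{\V}^{-})}\lambda^{p_{\V}^{-}}<\varepsilon$ eventually.

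For the converse direction, I would run the same comparison with the uniformly upper type exponent in the opposite direction. Fix $\lambda\in(0,1)$; then $1/\lambda\ge 1$ and the uniformly upper type $p_{\V}^{+}$ property gives, for every $\alpha\in\D$ and every $x\in\Omega$,
\begin{align*}
\V(x,\alpha/\lambda)\le C_{(p_{\V}^{+})}\lambda^{-p_{\V}^{+}}\V(x,\alpha).
\end{align*}
Taking the supremum on the level sets gives $F_n(\lambda)\le C_{(p_{\V}^{+})}\lambda^{-p_{\V}^{+}}\rho_{\V}(h_n)$. Hence, as soon as $\rho_{\V}(h_n)\le \lambda^{p_{\V}^{+}}/C_{(p_{\V}^{+})}$, we get $F_n(\lambda)\le 1$ and therefore $\|h_n\|_{\WP}\le\lambda$. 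Given any target $\lambda\in(0,1)$, the hypothesis $\rho_{\V}(h_n)\to 0$ makes this smallness hold eventually, and the desired convergence $\|h_n\|_{\WP}\to 0$ follows.

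The argument is essentially a one-liner on each side once the scaling lemmas from \eqref{i1} are invoked, so there is no genuine obstacle; the only mildly delicate point is bookkeeping the two-sided monotonicity $F_n(\lambda)\le F_n(\lambda')$ for $\lambda\ge\lambda'$ so that smallness of $\|h_n\|_{\WP}$ (an infimum) can be upgraded to $F_n(\lambda)\le 1$ for a specific $\lambda$. I do not expect to need Lemma \ref{l1} here; the elementary monotonicity of $F_n$ combined with the type inequalities suffices.
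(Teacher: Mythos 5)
Your proof is correct. Both directions are sound: the monotonicity of $F_n$ in $\lambda$ correctly upgrades $\|h_n\|_{\WP}<\lambda$ to $F_n(\lambda)\le 1$ (since the infimum defining the quasi-norm guarantees some $\lambda'<\lambda$ with $F_n(\lambda')\le 1$), and the two applications of \eqref{i1} with $s=\lambda$ and $s=1/\lambda$ are exactly the right scalings. The route is genuinely a bit different from the paper's: the paper invokes Lemma \ref{l1}, namely the exact normalization $\sup_{\alpha\in\D}\V(\{|f|>\alpha\},\alpha/\|f\|_{\WP})=1$, in both directions, and proves the implication $\rho_{\V}(h_n)\to 0\Rightarrow\|h_n\|_{\WP}\to 0$ by contradiction via a subsequence bounded below by some $\varepsilon_0$. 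You replace Lemma \ref{l1} by the elementary observation $F_n(\lambda)\le 1$ for any $\lambda>\|h_n\|_{\WP}$, and you argue the converse direction directly rather than by contradiction. What your version buys is self-containedness (no appeal to \cite[Lemma 3.3(ii)]{lyj16}) and it sidesteps the degenerate case $\|h_n\|_{\WP}=0$, which Lemma \ref{l1} excludes by hypothesis and which the paper's forward direction silently glosses over; what the paper's version buys is essentially nothing extra here, since the exact equality of Lemma \ref{l1} is stronger than what the argument needs. Your proof is a clean, slightly more elementary alternative.
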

\begin{proof}
If $\lim_{n\to\fz}\|h_n\|_{\WP}=0$, then, for any fixed $\varepsilon\in(0,1)$,
there exists a positive integer $N_0\in\nn$ such that, for any $n\in\nn\cap(N_0,\fz),$
$\|h_n\|_{\WP}<\varepsilon.$
From this, Lemma \ref{l1} and the fact that $\V$ is of uniformly lower type $p_{\V}^{-}$,
we deduce that, for any $n\in\nn\cap(N_0,\fz),$
\begin{align*}
\rho_{\V}(h_n)\lesssim \lf[\|h_n\|_{\WP}\r]^{p_{\V}^{-}}\sup_{\alpha\in\D}
\int_{\{x\in\Omega:\ |h_n(x)|>\alpha\}}\V\lf(x,\frac{\alpha}{\|h_n\|_{\WP}}\r)\,d\MP
\lesssim \varepsilon^{p_{\V}^{-}}.
\end{align*}
This implies that $\lim_{n\to\fz}\rho_{\V}(h_n)=0$.

Conversely, if $\lim_{n\to\fz}\|h_n\|_{\WP}=0$ is not true, then there
exist a constant $\varepsilon_0\in(0,1)$ and a sequence $\{h_{n_k}\}_{k\in\nn}$
of measurable functions such that, for any $k\in\nn$, $\|h_{n_k}\|_{\WP}\geq \varepsilon_0$.
Combining this, Lemma \ref{l1} and the uniformly upper type $p_{\V}^{+}$
property of $\V$, we find that, for any $k\in\nn,$
$$1\le\sup_{\alpha\in\D}\V\lf(\{x\in\Omega:\ |h_{n_k}(x)|>\alpha\},
\frac{\alpha}{\varepsilon_0}\r)
\lesssim \varepsilon_0^{-p_{\V}^{+}}\sup_{\alpha\in\D}
\V\lf(\{x\in\Omega:\ |h_{n_k}(x)|>\alpha\},\alpha\r),$$
which implies that, for any $k\in\nn,$
$\rho_{\V}(h_{n_k})\gtrsim \varepsilon_0^{p_{\V}^{+}}.$
This contradicts $\lim_{n\to\fz}\rho_{\V}(h_n)=0$.
Thus, we have $\lim_{n\to\fz}\|h_n\|_{\WP}=0$, which completes
the proof of Lemma \ref{lem-c}.
\end{proof}

\begin{remark}\label{Me}
Let $\V$ be a Musielak--Orlicz function.
Since $\sup_{t\in\D}\int_{\Omega}\V(x,t)\,d\MP<\fz,$
it follows that, for any $t\in\D$, $d\widehat{\MP_t}:=\V(\cdot,t)d\,\MP$
is finite measure on $(\Omega,\mathcal{F},\mathbb{P})$.
Now we claim that, for any $F\in\cf$ and $t\in\D$,
$$\widehat{\MP_t}(F)=0 \Longleftrightarrow \MP(F)=0.$$
To show this, it suffices to prove that, for any $t\in\D$, $\widehat{\MP_t}(F)=0$ for some $F\in\cf$
implies that $\MP(F)=0$. Indeed, for any $t\in\D$,
$0=\widehat{\MP_t}(F)=\int_{F}\V(\cdot,t)\,d\MP$. From this and the fact that
$\V(\cdot,t)$ is strictly positive, we deduce that $\MP(F)=0$. This proves the above claim.
\end{remark}

We now state the following bounded convergence theorem.

\begin{theorem}\label{Thm-bct}
Let $\V$ be a Musielak--Orlicz function satisfying Assumption \ref{a1-new}.
Let $h$ be a measurable function on $\Omega$ and $\{h_n\}_{n\in\nn}\subset \WP$
a sequence of measurable functions such that $h_n(x)$ converges to $h(x)$ for almost
every $x\in\Omega$ as $n\to\fz$. If there exists a positive constant $M$ such that,
for any $n\in\nn$, $|h_n(x)|\le M$ for almost every $x\in\Omega$, then
$$\lim_{n\to\fz}\|h_n-h\|_{\WP}=0.$$
\end{theorem}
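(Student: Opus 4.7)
The plan is to reduce convergence in $\WP$-quasinorm to convergence of the modular functional $\rho_\V$ via Lemma \ref{lem-c}, and then establish the latter by a single uniform domination argument that transfers the classical Lebesgue bounded convergence theorem to the Musielak--Orlicz setting. Concretely, by Lemma \ref{lem-c}, it suffices to prove
\begin{equation*}
\lim_{n\to\fz}\rho_{\V}(h_n-h)
=\lim_{n\to\fz}\sup_{\alpha\in\D}\V\lf(\lf\{x\in\Omega:\ |h_n(x)-h(x)|>\alpha\r\},\alpha\r)=0.
\end{equation*}

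First, I would observe that since $|h_n|\le M$ almost everywhere and $h_n\to h$ almost everywhere, we automatically have $|h|\le M$ and therefore $|h_n-h|\le 2M$ almost everywhere. Consequently, for every $\alpha\in(2M,\fz)$ the set $\{x\in\Omega:\ |h_n(x)-h(x)|>\alpha\}$ is $\MP$-null, hence (by Remark \ref{Me}) also $\widehat{\MP_t}$-null for every $t\in\D$, so these $\alpha$'s contribute $0$ to the supremum. Thus only $\alpha\in(0,2M]$ matters.

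The main step is to dominate the whole family $\{\V(\cdot,\alpha)\mathbf{1}_{\{|h_n-h|>\alpha\}}\}_{\alpha\in(0,2M]}$ by a single integrable function independent of $\alpha$. For this I invoke the uniformly lower type $p_{\V}^{-}$ of $\V$ with $s=\alpha/(2M)\in(0,1]$ and $t=2M$, which yields
\begin{equation*}
\V(x,\alpha)\le C_{(p_{\V}^{-})}\lf(\frac{\alpha}{2M}\r)^{p_{\V}^{-}}\V(x,2M),\quad x\in\Omega,\ \alpha\in(0,2M].
\end{equation*}
On the set $\{|h_n-h|>\alpha\}$ we have $\alpha^{p_{\V}^{-}}\le |h_n(x)-h(x)|^{p_{\V}^{-}}$, so multiplying and integrating gives
\begin{equation*}
\V\lf(\{|h_n-h|>\alpha\},\alpha\r)\le C_{(p_{\V}^{-})}(2M)^{-p_{\V}^{-}}\int_{\Omega}|h_n(x)-h(x)|^{p_{\V}^{-}}\V(x,2M)\,d\MP,
\end{equation*}
with the right-hand side independent of $\alpha$. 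Taking the supremum over $\alpha\in\D$ therefore yields
\begin{equation*}
\rho_{\V}(h_n-h)\le C_{(p_{\V}^{-})}(2M)^{-p_{\V}^{-}}\int_{\Omega}|h_n(x)-h(x)|^{p_{\V}^{-}}\V(x,2M)\,d\MP.
\end{equation*}

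Finally, I would conclude by the classical Lebesgue bounded convergence theorem applied to the finite measure $d\widehat{\MP_{2M}}:=\V(\cdot,2M)\,d\MP$ (finite by the standing assumption $\sup_{t\in\D}\int_\Omega\V(x,t)\,d\MP<\fz$ recalled in Section \ref{s2}): the integrand $|h_n-h|^{p_{\V}^{-}}$ is dominated by the constant $(2M)^{p_{\V}^{-}}$, and, by Remark \ref{Me}, the $\MP$-a.e. convergence $h_n\to h$ is equivalent to $\widehat{\MP_{2M}}$-a.e. convergence. Hence the integral tends to $0$, so $\rho_\V(h_n-h)\to 0$, and Lemma \ref{lem-c} finishes the proof. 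The only subtle point—which is the heart of the argument—is producing a single $\alpha$-free majorant; the uniformly lower type property of $\V$ combined with the elementary pointwise inequality $\alpha\mathbf{1}_{\{|h_n-h|>\alpha\}}\le|h_n-h|$ achieves exactly this, so no further obstacle arises.
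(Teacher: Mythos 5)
Your proposal is correct, and it takes a recognizably different route from the paper's own argument. The paper fixes $\varepsilon$, chooses a threshold $\delta$, and splits the supremum defining $\rho_{\V}(h_n-h)$ into the ranges $\alpha\in(0,\delta]$ and $\alpha\in(\delta,\fz)$: the small-$\alpha$ part is made less than $\varepsilon$ uniformly in $n$ by the lower type estimate $\int_{\Omega}\V(x,\delta)\,d\MP\le C_{(p_{\V}^{-})}\delta^{p_{\V}^{-}}\int_{\Omega}\V(x,1)\,d\MP$, while the large-$\alpha$ part is bounded by $\V(\{|h_n-h|>\delta\},2M)$ and sent to zero via convergence in measure with respect to $\widehat{\MP_{2M}}$. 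You instead eliminate the case split entirely: combining the lower type inequality $\V(x,\alpha)\le C_{(p_{\V}^{-})}(\alpha/(2M))^{p_{\V}^{-}}\V(x,2M)$ with the Chebyshev-type bound $\alpha^{p_{\V}^{-}}\mathbf{1}_{\{|h_n-h|>\alpha\}}\le|h_n-h|^{p_{\V}^{-}}$ produces a single $\alpha$-free majorant, so that $\rho_{\V}(h_n-h)\lesssim\int_{\Omega}|h_n-h|^{p_{\V}^{-}}\V(x,2M)\,d\MP$ and one application of the classical bounded convergence theorem for the finite measure $\widehat{\MP_{2M}}$ finishes the job. Both arguments rest on the same three ingredients (Lemma \ref{lem-c}, the uniformly lower type property, and the mutual absolute continuity in Remark \ref{Me}), but yours is somewhat more streamlined, yielding an explicit quantitative bound on the modular by a single integral rather than an $\varepsilon$--$\delta$ dichotomy; the paper's version avoids the Chebyshev trick and is closer in form to the standard convergence-in-measure argument. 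The only cosmetic point is that the lower type inequality is stated for $s\in(0,1)$, so the endpoint $\alpha=2M$ should be absorbed by noting the inequality is trivial there with constant $\max\{C_{(p_{\V}^{-})},1\}$; this is not a genuine gap.
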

\begin{proof}
For any fixed $\varepsilon\in\D$, let
$$\delta:=\min\lf\{\lf[\frac{\varepsilon}{2C_{(p_{\V}^{-})}
\|\V(\cdot,1)\|_{L^1(\Omega)}}\r]^{1/p_{\V}^{-}},\frac12\r\},$$
here and hereafter, $C_{(p_{\V}^{-})}$ is the positive constant same as in \eqref{i1}.
For any $n\in\nn$, we have
\begin{align*}
\rho_{\V}(h_n-h)&=\sup_{\alpha\in\D}
\int_{\{x\in\Omega:\ |h_n(x)-h(x)|>\alpha\}}\V(x,\alpha)\,d\MP\\
&=\max\lf\{\sup_{\alpha\in(0,\delta]}
\int_{\{x\in\Omega:\ |h_n(x)-h(x)|>\alpha\}}\V(x,\alpha)\,d\MP
,\sup_{\alpha\in(\delta,\fz)}
\int_{\{x\in\Omega:\ |h_n(x)-h(x)|>\alpha\}}\V(x,\alpha)\,d\MP\r\}\\
&=:\max\{{\rm J}_{n,1},{\rm J}_{n,2}\}.
\end{align*}
We first estimate ${\rm J}_{n,1}$. By the uniformly
lower type $p_{\V}^{-}$ property of $\V$, we know that, for any $n\in\nn$,
\begin{align}\label{J1}
{\rm J}_{n,1}\le \int_{\Omega}\V(x,\delta)\,d\MP
\le C_{(p_{\V}^{-})}\delta^{p_{\V}^{-}}\int_{\Omega}\V(x,1)\,d\MP<\varepsilon.
\end{align}
Now we estimate ${\rm J}_{n,2}$. Since, for any $n\in\nn$,
$|h_n|$ is pointwise $\MP$-almost everywhere bounded by $M$
and $h_n$ converges $\MP$-almost everywhere to $h$ as $n\to\fz$, we know that
$|h|$ is pointwise $\MP$-almost everywhere bounded by $M$.
From this, we deduce that, for any $n\in\nn$,
\begin{align}\label{J2}
{\rm J}_{n,2}&\le
\sup_{\alpha\in(\delta,\fz)}
\int_{\{x\in\Omega:\ |h_n(x)-h(x)|>\alpha\}}\V\lf(x,|h_n(x)-h(x)|\r)\,d\MP\\
&\le \V(\{x\in\Omega:\ |h_n(x)-h(x)|>\delta\},2M).\noz
\end{align}
Moreover, there exists a measurable set $E\in\cf$ such that $\MP(E)=0$ and
$h_n\to h$ on $E$ as $n\to\fz$. From this and Remark \ref{Me}, it follows that
$\widehat{\MP_{2M}}(E)=0$ and $\widehat{\MP_{2M}}(\Omega)<\fz$. Then we have
$h_n$ converges to $h$ in measure $\widehat{\MP_{2M}}$, that is, for every
$\sigma\in\D$,
$$\lim_{n\to\fz}\V\lf(\{x\in\Omega:\ |h_n(x)-h(x)|>\sigma\},2M\r)=0.$$
Combining this and \eqref{J2}, we find that there exists a positive integer
$N_0$ such that, for any $n\in\nn\cap(N_0,\fz)$,
${\rm J}_{n,2}<\varepsilon,$
which, together with \eqref{J1}, implies that, for any $n\in\nn\cap(N_0,\fz)$,
$\rho_{\V}(h_n-h)<\varepsilon.$
By this and the arbitrariness of $\varepsilon$, we find that
$$\lim_{n\to\fz}\rho_{\V}(h_n-h)=0.$$
From this and Lemma \ref{lem-c}, it follows that
$\lim_{n\to\fz}\|h_n-h\|_{\WP}=0$, which completes the proof of Theorem \ref{Thm-bct}.
\end{proof}

Finally, we establish the following dominated convergence theorem.

\begin{theorem}\label{thm-dct}
Let $\V$ be a Musielak--Orlicz function satisfying Assumption \ref{a1-new}.
Let $\{h_n\}_{n\in\nn}$ be a sequence of measurable functions that
converges $\MP$-almost everywhere to a measurable function $h$. Suppose
that there exists a measurable function $g\in\cwp$ such that $|h_n|$
is pointwise $\MP$-almost everywhere bounded by $g$ for any $n\in\nn$.
Then $$\lim_{n\to\fz}\|h_n-h\|_{\WP}=0.$$
\end{theorem}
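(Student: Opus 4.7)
The plan is to reduce the claim to the bounded convergence theorem (Theorem \ref{Thm-bct}) by a cut-off at a large level dictated by the majorant $g$, exactly the classical strategy for proving Lebesgue's dominated convergence from its bounded version.

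First I would observe that, since $h_n\to h$ pointwise $\MP$-almost everywhere and $|h_n|\le g$ pointwise $\MP$-almost everywhere, we also have $|h|\le g$ pointwise $\MP$-almost everywhere and hence $|h_n-h|\le 2g$ pointwise $\MP$-almost everywhere. Fix $\varepsilon\in\D$. Because $g\in\cwp$, by the very definition of absolutely continuous quasi-norm, there exists $N\in\nn$ such that
$$\lf\|g\,\mathbf{1}_{\{x\in\Omega:\ g(x)>N\}}\r\|_{\WP}<\varepsilon.$$

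Next I would split the difference as
$$h_n-h=(h_n-h)\mathbf{1}_{\{x\in\Omega:\ g(x)\le N\}}+(h_n-h)\mathbf{1}_{\{x\in\Omega:\ g(x)> N\}}=:u_n+v_n.$$
For $v_n$, the pointwise bound $|v_n|\le 2g\,\mathbf{1}_{\{g>N\}}$ together with the monotonicity of the quasi-norm (which is immediate from its definition) and the uniformly upper type $p_{\V}^+$ property of $\V$ (which gives $\|2g\mathbf{1}_{\{g>N\}}\|_{\WP}\lesssim\|g\mathbf{1}_{\{g>N\}}\|_{\WP}$) yields $\|v_n\|_{\WP}\lesssim\varepsilon$ for every $n\in\nn$. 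For $u_n$, we have $|u_n|\le 2N$ pointwise $\MP$-almost everywhere and $u_n(x)\to 0$ for $\MP$-almost every $x\in\Omega$ as $n\to\fz$, so Theorem \ref{Thm-bct} applies and gives an index $N_0\in\nn$ such that $\|u_n\|_{\WP}<\varepsilon$ for any $n\in\nn\cap(N_0,\fz)$.

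Finally, combining these two bounds through the quasi-triangle inequality from Remark \ref{tri}, I obtain a positive constant $C$, independent of $n$ and $\varepsilon$, such that, for any $n\in\nn\cap(N_0,\fz)$,
$$\|h_n-h\|_{\WP}\le C\bigl[\|u_n\|_{\WP}+\|v_n\|_{\WP}\bigr]\lesssim\varepsilon.$$
Letting $\varepsilon\to 0^+$ then yields $\lim_{n\to\fz}\|h_n-h\|_{\WP}=0$, which is exactly the desired conclusion.

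The only point requiring some care — and the place I would expect to spend the most time checking — is the step that controls $v_n$: one must ensure both that the monotonicity $\|f\|_{\WP}\le\|\widetilde f\|_{\WP}$ whenever $|f|\le|\widetilde f|$ is valid (this is straightforward from the definition, since the super-level sets shrink) and that the factor $2$ can be absorbed using the uniformly upper type $p_{\V}^+$ property rather than homogeneity (which fails for general Musielak--Orlicz quasi-norms). Modulo this verification, the remainder of the argument is a clean application of the bounded convergence theorem already at our disposal.
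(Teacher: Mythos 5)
Your proof is correct and follows essentially the same route as the paper's: the same cut-off at a level $N$ where $\|g\mathbf{1}_{\{g>N\}}\|_{\WP}<\varepsilon$ (using $g\in\cwp$), the same splitting of $h_n-h$ over $\{g\le N\}$ and $\{g>N\}$, control of the first piece via Theorem \ref{Thm-bct} and of the second via the pointwise domination by $2g\mathbf{1}_{\{g>N\}}$, and recombination through Remark \ref{tri}. The two points you flag for verification (monotonicity of the quasi-norm under pointwise domination, and absorption of the constant $2$, which also follows directly from Remark \ref{tri}) are both routine and are handled implicitly in the paper in exactly the way you describe.
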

\begin{proof}
For any $\varepsilon\in\D$, since $g\in\cwp$, we deduce that there exists
a positive integer $N_0$ such that
\begin{align*}
\lf\|g\mathbf{1}_{\{x\in\Omega:\ |g(x)|>N_0\}}\r\|_{\WP}<\varepsilon.
\end{align*}
Combining this, Remark \ref{tri}
and the fact that $\{h_n\}_{n\in\nn}$ converges $\MP$-almost everywhere to
$h$ as $n\to\fz$, we obtain
\begin{align}\label{dc1}
\lf\|\lf(h_n-h\r)\mathbf{1}_{\{x\in\Omega:\ |g(x)|>N_0\}}\r\|_{\WP}
\le \lf\|2g\mathbf{1}_{\{x\in\Omega:\ |g(x)|>N_0\}}\r\|_{\WP}\lesssim\varepsilon.
\end{align}

On another hand, notice that $|h_n(x)|\le N_0$ for $\MP$-almost every
$x\in\{x\in\Omega:\ |g(x)|\le N_0\}$. Then, by Remark \ref{tri} and Theorem \ref{Thm-bct},
we know that there exists a positive integer $N$ such that, for any $n\in\nn\cap(N,\fz)$,
$\lf\|\lf(h_n-h\r)\mathbf{1}_{\{x\in\Omega:\ |g(x)|\le N_0\}}\r\|_{\WP}<\varepsilon.$
From this, \eqref{dc1} and Remark \ref{tri}, it follows that, for any $n\in\nn\cap(N,\fz)$,
$$\|h_n-h\|_{\WP}\lesssim \lf\|\lf(h_n-h\r)\mathbf{1}_{\{x\in\Omega:\ |g(x)|>N_0\}}\r\|_{\WP}
+\lf\|\lf(h_n-h\r)\mathbf{1}_{\{x\in\Omega:\ |g(x)|\le N_0\}}\r\|_{\WP}\lesssim\varepsilon.$$
By this and the arbitrariness of $\varepsilon$, we have $\lim_{n\to\fz}\|h_n-h\|_{\WP}=0.$
This finishes the proof of Theorem \ref{thm-dct}.
\end{proof}

\begin{remark}\label{rmk-s}
Let $\V$ be a Musielak--Orlicz function satisfying Assumption \ref{a1-new}.
We then let
$$W\mathcal{H}_{\V}^s(\Omega):=
\lf\{f=(f_n)_{n\in\zz_{+}}\in\cm:\ s(f)\in\cwp\r\},$$
$$W\mathcal{H}_{\V}^S(\Omega):=
\lf\{f=(f_n)_{n\in\zz_{+}}\in\cm:\ S(f)\in\cwp\r\}$$
and
$$W\mathcal{H}_{\V}^M(\Omega):=
\lf\{f=(f_n)_{n\in\zz_{+}}\in\cm:\ M(f)\in\cwp\r\}.$$
From Remark \ref{close} and the sublinearity of the operator $s$,
we deduce that $W\mathcal{H}_{\V}^s(\Omega)$ is a closed subspace
of $\WH$. Similarly, $W\mathcal{H}_{\V}^S(\Omega)$ and
$W\mathcal{H}_{\V}^M(\Omega)$ are the closed subspaces of
$WH_{\V}^S(\Omega)$ and $WH_{\V}^M(\Omega)$, respectively.

If $f\in W\mathcal{H}_{\V}^s(\Omega)\subset\WH$, by Theorem
\ref{thm-atom}, we have $f\in WH_{\rm at}^{\V,q,s}(\Omega)$.
Thus, there exists a sequence of triples, $\{\mu^k,a^k,\nu^k\}_{k\in\zz}$,
such that
$f=\sum_{k\in\zz}\mu^ka^k$ $\MP$-almost everywhere.
Now we claim that the sum $\sum_{k=m}^{\ell}\mu^ka^k$
converges to $f$ in $\WH$ as $m\to-\fz$ and $\ell\to\fz$.
Indeed, for any $m,\ \ell\in\mathbb{Z}$ with $m<\ell$, we have
\begin{align}\label{atom pf2}
f-\sum_{k=m}^{\ell}\mu^ka^k=\lf(f-f^{\nu^{\ell+1}}\r)+f^{\nu^m}
\quad {\rm and} \quad \lf[s\lf(f-f^{\nu^{\ell+1}}\r)\r]^2
=\lf[s\lf(f\r)\r]^2-\lf[s\lf(f^{\nu^{\ell+1}}\r)\r]^2.
\end{align}
Thus, we obtain
$s(f-f^{\nu^{\ell+1}})\le s(f)$
and $s(f^{\nu^m})\le s(f).$
From this, \eqref{atom pf2}, the fact that,
for $\MP$-almost every $x\in\Omega,$
$$\lim_{\ell\to\infty}s\lf(f-f^{\nu^{\ell+1}}\r)(x)=0,
\quad \lim_{m\to-\infty}s\lf(f^{\nu^m}\r)(x)=0$$
and Theorem \ref{thm-dct}, we deduce that
$$\lim_{\ell\to\infty}\lf\|s\lf(f-f^{\nu^{\ell+1}}\r)\r\|_{\WP}=0\quad
\mbox{and}\quad \lim_{m\to-\infty}\lf\|s\lf(f^{\nu^m}\r)\r\|_{\WP}=0.$$
Combining this, Remark \ref{tri} and the sublinearity of the operator $s$,
we complete the proof of the claim.
\end{remark}

\begin{remark}
Let $\V$ be as in Theorem \ref{thm-atomSM} and $f\in W\mathcal{H}_{\V}^S(\Omega)$
[resp., $W\mathcal{H}_{\V}^M(\Omega)$].
Analogously to Remark \ref{rmk-s}, from Theorem \ref{thm-dct}, we deduce that
the sum $\sum_{k=m}^{\ell}\mu^ka^k$ in Step $1)$ of the proof of Theorem \ref{thm-atomSM}
converges to $f$ in $WH_{\V}^M(\Omega)$ [resp., $WH_{\V}^S(\Omega)$]
as $m\to-\fz$ and $\ell\to\fz$, which may be have independent interest.
\end{remark}


\bigskip

\noindent  Guangheng Xie and Dachun Yang (Corresponding author)

\smallskip

\noindent  Laboratory of Mathematics and Complex Systems
(Ministry of Education of China),
School of Mathematical Sciences, Beijing Normal University,
Beijing 100875, People's Republic of China

\smallskip

\noindent {\it E-mails}: \texttt{guanghengxie@mail.bnu.edu.cn} (G. Xie)

\noindent\phantom{{\it E-mails}:} \texttt{dcyang@bnu.edu.cn} (D. Yang)

\medskip

\end{document}